\def\oa{\overline{a}}
\def\del{\partial}
\def\HH{\operatorname{HH}}
\def\cube#1#2#3#4#5#6#7#8{
& #5 \ar[rr] \ar[dl] \ar@{-}[d] && #6 \ar[dd] \ar[dl] \\
#1 \ar[rr] \ar[dd]  & \ar[d] & #2 \ar[dd] \\
& #7 \ar@{-}[r] \ar[dl] & \ar[r] & #8 \ar[dl] \\
#3 \ar[rr] && #4 \\
}
\def\S{\Sigma}
\def\smsh{\wedge}
\def\chr{\operatorname{char}}
\def\cP{\mathcal P}
\def\cA{\mathcal A}
\def\cB{\mathcal B}
\def\cC{\mathcal C}
\def\cD{\mathcal D}
\def\dm{\operatorname{dim}}
\def\sExt{\sExt}
\def\Tor{\operatorname{Tor}}
\def\Mat{\operatorname{Mat}}
\def\End{\operatorname{End}}
\def\Spec{\operatorname{Spec}}
\def\bu{\bullet}
\def\into{\hookrightarrow}
\def\onto{\twoheadrightarrow}
\def\o#1{{\overline{#1}}}
\def\a{\alpha}
\def\b{\beta}
\def\g{\gamma}
\def\e{\epsilon}
\def\rk{\operatorname{rank}}
\DeclareMathOperator*{\colim}{colim}
\newcommand{\Q}{\mathbb{Q}}
\newcommand{\G}{\mathbb{G}}
\newcommand{\C}{\mathbb{C}}
\newcommand{\Z}{\mathbb{Z}}
\newcommand{\fp}{{\mathfrak p}}
\newcommand{\fm}{{\mathfrak m}}
\numberwithin{equation}{section}
\theoremstyle{plain} 
\newtheorem{thm}[equation]{Theorem}
\newtheorem{thm-conj}[equation]{Theorem-Conjecture}
\newtheorem{defn-conj}[equation]{Definition-Conjecture}
\newtheorem*{introthm*}{Theorem}
\newtheorem{cor}[equation]{Corollary}
\newtheorem{lem}[equation]{Lemma}
\newtheorem{prop}[equation]{Proposition}
\theoremstyle{definition}
\newtheorem{defn}[equation]{Definition}
\newtheorem{ex}[equation]{Example}
\theoremstyle{remark}
\newtheorem{rem}[equation]{Remark}
\def\Perf{\operatorname{Perf}}
\def\qPerf{\operatorname{qPerf}}
\newcommand{\Hom}{\operatorname{Hom}}
\newcommand{\xra}[1]{\xrightarrow{#1}}
\newcommand{\xla}[1]{\xleftarrow{#1}}
\newcommand{\id}{\operatorname{id}}
\newcommand{\odd}{\operatorname{odd}}
\newcommand{\ov}[1]{\overline{#1}}
\def\l{\lambda}
\def\ev{{\mathrm{even}}}
\def\odd{{\mathrm{odd}}}
\def\Hoch{\operatorname{Hoch}}
\def\oHoch{\overline{\operatorname{Hoch}}}
\def\tHoch{\operatorname{{Hoch}}}
\def\s{\sigma}
\def\d{\delta}
\def\tr{\operatorname{tr}}
\def\n{\nabla}
\def\tn{\widetilde{\nabla}}
\def\and{ \text{ and } }
\def\can{\mathrm{can}}
\def\HN{\operatorname{HN}}
\def\HP{\operatorname{HP}}
\def\op{{\mathrm{op}}}
\def\G{\Gamma}
\def\nat{\natural}
\def\inc{\mathrm{inc}}
\def\oMC{\overline{\on{MC}}}
\def\tMC{{\on{MC}}}
\def\oHN{\overline{\on{HN}}}
\def\tHN{\on{HN}}
\def\oHP{\overline{\on{HP}}}
\def\tHP{\on{HP}}
\def\O{\Omega}
\def\o{\omega}
\def\vj{{\vec{j}}}
\def\vl{{\vec{l}}}
\def\vm{{\vec{m}}}
\def\on{\operatorname}
\def\MF{\on{MF}}
\def\Ob{\on{Ob}}
\def\oG{\overline{G}}
\def\ch{\on{ch}}
\begin{document}
\begin{abstract}
Let $k$ be a field of characteristic $0$ and $\cA$ a curved $k$-algebra. We obtain a Chern-Weil-type formula for the Chern character of a perfect
$\cA$-module taking values in $\HN_0^{II}(\cA)$, the \emph{negative cyclic homology of the second kind} associated to $\cA$, when $\cA$ satisfies a
certain smoothness condition.   

\end{abstract}
\title [A Chern-Weil formula for the Chern character of a perfect curved module]{A Chern-Weil formula for the Chern character of a perfect curved module}

\author{Michael K. Brown}
\address{Department of Mathematics, University of Wisconsin, Madison, WI 53706-1388, USA}
\email{mkbrown5@wisc.edu}

\author{Mark E. Walker}
\address{Department of Mathematics, University of Nebraska, Lincoln, NE 68588-0130, USA}
\email{mark.walker@unl.edu}

\thanks{MB and MW gratefully acknowledge support from the National Science Foundation (NSF award DMS-1502553) and the Simons Foundation (grant  \#318705), respectively.} 
\maketitle

\tableofcontents

\section{Introduction}
Our goal is to obtain a Chern-Weil-type formula for the Chern character map associated to certain curved algebras.
Let us explain what we mean.

In its most classical form, the Chern character map is a ring homomorphism
from the Grothendieck group of complex vector bundles on a finite CW complex $X$ to the even part of its rational singular cohomology:
$$
ch: KU^0(X) \to \bigoplus_i H^{2i}(X, \Q).
$$
When $X$ is a smooth manifold, by extending coefficients from 
$\Q$ to $\C$ and composing with the isomorphism to the de Rham cohomology of $X$ with complex coefficients,
one obtains the {\em de Rham Chern character map} 
$$
ch_{dR}: KU^0(X) \to \bigoplus_i H^{2i}_{dR}(X;\C).
$$
For a smooth complex vector bundle $V$, Chern-Weil theory provides an explicit
formula for $ch_{dR}(V)$ in the following way. Let $\Omega^{\bullet}(X)$ denote the complexified de Rham complex of $X$, and choose a $\C$-linear connection 
$$
\nabla: \G(V) \to \G(V) \otimes_{\Omega^0(X)}
\Omega^1(X)
$$ 
on $V$. Let 
$$
R := \nabla^2: \G(V) \to \G(V) \otimes_{\Omega^0(X)} \Omega^2(X)
$$
denote the associated curvature.
Regarding $R$ as an element of $\End_{\Omega^0(X)}(\G V) \otimes_{\Omega^0(X)} \Omega^2(X)$, one has 
$$
ch_{dR}(V) = \tr(e^R) = \sum_i \frac{\tr(R^i)}{i!} \in \bigoplus_i H_{dR}^{2i}(X; \C),
$$
where
$$\tr = \tr_{\End_{\Omega^0(X)}(\G V)} \otimes \id: \End_{\Omega^0(X)}(\G V) \otimes_{\Omega^0(X)} \Omega^{\bullet}(X) \to \Omega^\bullet(X)$$ 
is the trace map.

The de Rham Chern character map and Chern-Weil formula have purely algebraic analogues. For the rest of this introduction, let $k$ be a field of
characteristic $0$ and
$X$ a smooth variety over $k$.
Grothendieck defines a very general Chern character map
$$
ch: K_0(X) \to CH^*(X) \otimes \Q
$$
taking values in the rationalized Chow groups of $X$ (in fact, this map induces an isomorphism
$K_0(X) \otimes \Q\xra{\cong} CH^*(X) \otimes \Q$).
For each $i$, there is a cycle class map
$$
CH^i(X) \otimes \Q \to H_{dR}^{2i}(X/k)
$$
taking values in the algebraic de Rham cohomology of $X$ relative to the base field $k$, and
the {\em algebraic de Rham Chern character map}  is the composition:
$$
K_0(X) \xra{ch_{dR}} \bigoplus_i H_{dR}^{2i}(X/k).
$$
As in the topological setting, $ch_{dR}(V)$ may be given an explicit formula by choosing an algebraic connection
$\nabla: V \to V \otimes \Omega^1_{X/k}$. (When $X$ is affine, every vector bundle admits
such a connection, but for vector bundles over non-affine schemes, connections typically do not exist. When $X$ is
quasi-projective, this difficulty may be circumvented by using Jouanolou's device to 
reduce to the affine case.) 

The algebraic de Rham Chern character for smooth $k$-varieties can be recast in another way using the Hochschild-Kostant-Rosenberg isomorphism
$$
\HP_0(X) \cong \bigoplus_i H_{dR}^{2i}(X/k),
$$
where $\HP$ denotes {\em periodic cyclic homology}. This gives a map
$$
ch_{HP}: K_0(X) \to \HP_0(X).
$$
In fact, $ch_{HP}$ factors through the canonical map $\HN_0(X) \to \HP_0(X)$, where $\HN_0$ denotes {\em negative cyclic
  homology}, giving a map
$$
ch_{HN}: K_0(X) \xra{ch_{HN}} \HN_0(X). 
$$

The map $ch_{HN}$ may be vastly generalized: one may replace $X$ by any $k$-linear \emph{differential graded (dg) category}, i.e., a category
enriched over complexes of $k$-vector spaces. See Section \ref{basic} below for the precise definition of a dg
category, 
and see Section~\ref{cherndgcat} for the definition of the map $ch_{HN}$ in the setting of dg categories.

There is a variant of
negative cyclic homology called {\em negative cyclic homology of the second kind} (see Section~\ref{invariants} below), and there is a canonical map
$$
\can: \HN_0(\cA) \to \HN_0^{II}(\cA)
$$
for any dg category $\cA$. By composing $ch_{HN}$ with the map $\can$, we obtain the {\em Chern character map of the second kind}
$$
ch_{HN}^{II}: K_0(\cA) \to \HN_0^{II}(\cA)
$$
for any dg-category $\cA$. The Chern character map of the second kind admits an even further generalization: we may take $\cA$ to be a {\em curved} dg
category. Hochschild-type invariants of curved dg categories (or \emph{cdg categories}, for short) have been intensively studied in recent work of, for
instance, C{\u{a}}ld{\u{a}}raru-Tu \cite{caldararu-tu}, Efimov \cite{efimov}, Platt \cite{platt}, Polishchuk-Positselski \cite{PP} and Segal \cite{segal}; we refer the
reader to Section~\ref{basic} below for background on cdg categories.

We will mostly be interested in a certain family of cdg categories called essentially smooth curved $k$-algebras, which are 
cdg categories having only one object satisfying an appropriate smoothness condition. In detail, 
an {\em essentially smooth curved $k$-algebra} is a pair $(A, h)$, where 
\begin{itemize}
\item $A$ is a commutative $\Z$- or $\Z/2$-graded $k$-algebra that is concentrated in even degrees, 
\item $h$ is an element of degree $2$, and
\item upon forgetting the grading, $A$ is an essentially smooth $k$-algebra in the classical sense.
\end{itemize}

Fix an essentially smooth curved $k$-algebra $\cA = (A, h)$. It turns out that the ordinary negative cyclic homology $\HN_*(\cA)$ is 0 when $h \ne 0$ 
\cite[Section 2.4]{PP},  but $\HN_*^{II}(\cA)$ is typically of interest: there is a variation of the Hochschild-Kostant-Rosenberg isomorphism
taking the form
$$
\e: \HN_0^{II}(\cA) \xra{\cong} H_0(\Omega^\bullet_{A/k}[[u]], ud + dh),
$$
where $(\Omega^\bullet_{A/k},d)$ is the classical de Rham complex,
$u$ is a formal parameter of degree $2$, and $dh$ is left-multiplication by the element $dh \in \Omega^1_{A/k}$. (With our indexing convention, the de Rham
differential $d$ has degree $-1$, and hence $ud$ has degree $1$.) See Section \ref{HKRsection} for more details.

Our main result gives a Chern-Weil-type formula for the Chern character map of the second kind associated to $\cA$. 
Before stating it, we recall some more terminology. A \emph{perfect right $\cA$-module} is a pair $(P, \d_P)$, where 
\begin{itemize}
\item $P$ is a finitely generated projective right graded $A$-module, and 
\item $\d_P$ is a degree $1$ $A$-linear endomorphism of $P$ such that $\d_P^2 = - \rho_{h}$, where $\rho_h$ denotes right multiplication by $h$. 
\end{itemize}
A perfect right $\cA$-module may equivalently be viewed as a ``graded matrix factorization"; see Remark \ref{mfremark} for details.

A {\em connection} on a perfect right $\cA$-module $(P, \d_P)$ is a $k$-linear map 
$$
\n: P \to P  \otimes_A  \Omega^1_{A/k}
$$
of degree $-1$ such that $\n(pa) =   \n(p)a + (-1)^{|p|}p \otimes d(a)$ for all $a \in A$ and $p \in P$. That is, a connection on $(P, \d_P)$ is just a connection on the projective $A$-module $P$ which preserves the grading; when $\G = \Z/2\Z$, such a connection is called a \emph{superconnection} \cite{quillen}. A connection on $(P, \d_P)$ extends in a natural way to a $k$-linear
endomorphism of  $P \otimes_A \Omega^\bu_{A,k}$, and, in particular, we have an $A$-linear map 
$$
\n^2 = \n \circ \n: P \to P  \otimes_A  \Omega^2_{A/k}.
$$
We define the {\em curvature} associated to such a connection to be
the $A$-linear map
$$
R: P  \to P \otimes_A \Omega^\bullet_{A/k}[[u]] 
$$
given by 
$$
R = u \n^2  + [\n, \d_P],
$$
where  $[\n, \d_P] := \n \circ \d_P + (\d_P \otimes \id) \circ \n$. 
This definition is inspired by Quillen's formula in \cite{quillen} for the Chern character of a relative topological $K$-theory class.

Using the canonical isomorphism 
$$
\Hom_A(P, P \otimes_A \Omega^\bullet_{A/k}[[u]])
\cong \End_A(P) \otimes_A \Omega^\bullet_{A/k}[[u]],
$$
we regard $R$ as an element of the right-hand side, which has a natural ring structure. In particular, powers $R^i$ make sense for $i \ge 0$.

The main result of this paper is the following:

\begin{introthm*}[see Theorem~\ref{mainthm} below]  
\label{introthm}
Let $\cA = (A, h)$ be an essentially smooth $\Z$- or $\Z/2$-graded curved algebra over a field $k$ of characteristic $0$,
and let $\cP = (P, \d_P)$ be a perfect right $\cA$-module. For any choice of 
connection $\n$ on $\cP$ with associated curvature $R$, 
the Chern-Weil formula
\begin{equation} \label{E319}
\e \circ ch^{II}_{HN}(\cP)  = \tr(\exp(-R))
\end{equation}
holds, where $\e$ is the HKR isomorphism,
$$
\tr(\exp(-R))  = \id - \tr(R) + \frac{\tr(R^2)}{2!} -  \frac{\tr(R^3)}{3!} + \cdots  \in H_0(\Omega^\bullet_{A/k}[[u]], ud + dh),
$$
and $\tr$ denotes the trace map $\tr_{\End_A(P)} \otimes \id: \End_A(P) \otimes_A \Omega^\bullet_{A/k}[[u]] \to \Omega^\bullet_{A/k}[[u]]$.
\end{introthm*}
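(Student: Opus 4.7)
My plan is to compute $\e \circ ch^{II}_{HN}(\cP)$ explicitly at the chain level, using the connection $\n$ to bridge the combinatorial Hochschild-theoretic description of the Chern character with the differential-geometric formula $\tr(\exp(-R))$. The approach is a curved, second-kind adaptation of Quillen's superconnection technique: the super-square $(\n + \d_P)^2 = \n^2 + [\n, \d_P] - \rho_h$ differs from $R$ only by the $\rho_h$-shift---which will be absorbed into the twisted differential $ud + dh$---and by the $u$-rescaling---which encodes the internal grading consistently.

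First, I would fix an explicit cocycle in the second-kind negative cyclic complex representing $ch^{II}_{HN}(\cP)$, built from the Maurer-Cartan element $\d_P$ and the unit of $A$ via a Quillen-type Chern character formula for cdg modules. Second, I would use $\n$ to construct a connection-dependent refinement of the HKR antisymmetrization map carrying this cocycle into $\End_A(P) \otimes_A \Omega^\bullet_{A/k}[[u]]$, and identify the result---after applying $\tr$---with $\tr(\exp(-R))$ by expanding in powers of $\n^2$ and $[\n, \d_P]$ and matching the Taylor series of $\exp(-R)$. A key verification along the way is that $\tr(\exp(-R))$ is a cycle for $ud + dh$, a Bianchi-type identity: one computes $[\n, R]$ using $\d_P^2 = -\rho_h$, observes that the trace kills supercommutators, and uses the Leibniz rule together with the derivation property of $\n$ to account for the remaining $dh$-term.

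The main obstacle will be the chain-level identification in step two. On the Hochschild side, the Chern character is a formal infinite sum over bar-elements, whereas on the de Rham side $\tr(\exp(-R))$ is a formal series in $u$; matching these term by term requires careful tracking of Koszul signs, of the $-\rho_h$ appearing in $\d_P^2$, and of convergence in the second-kind framework, where certain infinite sums become well-defined thanks to passing to a $u$-adic product completion rather than a direct sum. The essential insight is that extending $\n$ as a derivation on Hochschild chains valued in $\End_A(P)$ conjugates the second-kind Hochschild differential into the twisted de Rham differential $ud + dh$ plus a correction that, upon exponentiation and tracing, becomes exactly $\tr(\exp(-R))$.
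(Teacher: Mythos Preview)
Your plan has the right shape---express $ch^{II}_{HN}(\cP)$ as an explicit bar-cocycle in $\d_P$, push it through a connection-refined HKR-type map, and read off $\tr(\exp(-R))$---and this is indeed the paper's strategy. But the proposal leaves the two substantive steps as black boxes, and your descriptions of how to fill them do not match what is actually required.

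The first gap is the ``connection-dependent refinement of the HKR antisymmetrization map''. You neither define it nor indicate why it is a chain map from $\oHN^{II}$ to $(\Omega^\bullet_{A/k}[[u]], ud + dh)$, and this is the technical core of the argument (Theorem~\ref{existence}, whose proof occupies Appendix~\ref{appendixtechnical}). The paper's map $\tr_\n$ (Definition~\ref{tracedefinition}) sends $\a_0[\ov{\a_1}|\cdots|\ov{\a_n}]$ to a sum over all ways of inserting powers of $\n^2$ between the factors $\a_0, [\n,\a_1],\dots,[\n,\a_n]$, weighted by $(-1)^J/(J+n)!$; it is not produced by any conjugation principle of the sort you describe (``extending $\n$ as a derivation on Hochschild chains conjugates the differential into $ud+dh$ plus a correction''), and verifying $\tr_\n\circ(b_2+b_0+uB)=(ud+dh)\circ\tr_\n$ is a lengthy combinatorial computation, not a sign-tracking exercise. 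The second gap is that, even granting a chain map with the right formula, you must still argue that it computes $\e\circ ch^{II}_{HN}$ rather than merely being \emph{some} map sending your cocycle to $\tr(\exp(-R))$. The paper does this in two moves: it passes from $\cP$ to $\cP^\nat=(P,0)$ via the non-strict isomorphism $(\id,\d_P)\colon\End(\cP)\to\End(\cP^\nat)$, which carries $\id_P$ to the cocycle $\sum_j(-1)^j[\ov{\d_P}|\cdots|\ov{\d_P}]$ (Proposition~\ref{comparison}); then it compares $\tr_\n$ with $\e\circ ch^{II}$ on the two-object full subcategory $\{\cP^\nat,\cA\}\subset\qPerf(\cA^\op)$, where both one-object inclusions are pseudo-equivalences and $\tr_\n$ restricts to the HKR map $\e$ on the object $\cA$, forcing the two to coincide (Corollary~\ref{cor72}). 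Your sketch contains neither the passage to $\cP^\nat$ nor this comparison argument.
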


\begin{rem}
\label{finitesum}
The alternating sum 
$$
\id - \tr(R) + \frac{\tr(R^2)}{2!} -  \frac{\tr(R^3)}{3!} + \cdots 
$$
is finite, since $\Omega^i_{A/k} = 0$ for $i > \dim(A)$. The image of $\e \circ ch^{II}_{HN}(\cP)$ is therefore contained in the image of the canonical map
$$
H_0(\Omega^\bullet_{A/k}[u], ud + dh) \to H_0(\Omega^\bullet_{A/k}[[u]], ud + dh).
$$
\end{rem}

\begin{rem}
The formula in the main theorem closely resembles Quillen's formula in \cite{quillen} for the Chern character of a relative $K$-theory class. We have not yet been able to make the relationship between the two precise.
\end{rem}

There is a commutative square
\begin{equation}
\label{HHandHN}
\xymatrix{ 
 \HN_0^{II}(\cA) \ar[d]^-{p} \ar[r]^-{\epsilon} & H_0(\Omega^\bullet_{A/k}[[u]], ud + dh) \ar[d]^-{u = 0} \\
\HH_0^{II}(\cA)  \ar[r]^-{\epsilon}_{} & H_0(\Omega^\bullet_{A/k}, dh) ,\\
}
\end{equation}
where $\HH_0^{II}$ denotes \emph{Hochschild homology of the second kind} (see Section \ref{MCunital}), $p$ is a canonical map, and the bottom map is the HKR isomorphism for Hochschild homology of the second kind, which we also denote by $\e$. Set $ch^{II}_{HH} := p \circ ch^{II}_{HN}$. Using the main theorem and
\eqref{HHandHN}, we obtain the following formula for $\epsilon \circ ch^{II}_{HH}$:

$$
\e \circ ch^{II}_{HH}(\cP) = \tr(\exp(-[\n, \d_P])) \in H_0(\Omega^\bullet_{A/k}, dh).
$$
Polishchuk-Vaintrob and Segal also obtain formulas for $\epsilon \circ ch^{II}_{HH}$ in \cite[Corollary 3.2.4]{PV} and \cite[3.2]{segal}, 
using different methods, and Platt generalizes these results to the global setting in \cite{platt}. See Example \ref{mfchern} and Remark \ref{segalremark} for comparisons between our results and those of Polishchuk-Vaintrob and Segal. 

The main theorem recovers the classical Chern-Weil formula for the Chern character of a projective module over a commutative ring:

\begin{ex}
\label{classical}
Suppose $\cA=(A, 0)$, where $A$ is a smooth $\Z$-graded $k$-algebra concentrated in degree $0$. Then one has isomorphisms
$$
\HN_0(\cA) \xra{\cong} \HN_0^{II}(\cA) \xra{\cong} H_0(\Omega^{\bullet}_{A/k}[u], ud)
= \bigoplus_{j}  H^{2j}_{dR}(A) u^j.
$$
If $P$ is a finitely generated projective $A$-module,
we may regard $P$ as an object $\cP = (P, 0) \in \Perf(\cA) = \Perf(\cA^\op)$. Choose a connection $\n$ on $P$. The main theorem implies
$$
(\epsilon \circ ch_{HN}^{II})(\cP) = \tr(\exp(-\n^2 u)) = \sum_j (-1)^j \frac{\tr(\n^{2j})}{j!}  u^j \in \bigoplus_{j}  H^{2j}_{dR}(A) u^j.
$$
This coincides, up to a sign, with the classical formula for the Chern character of a projective module over a commutative ring (see, for instance, \cite[Section 8.1]{Loday}).

Concerning the sign difference: let $\tau$ be the endomorphism of $H_0(\Omega^\bullet_{A/k}[[u]], ud + dh)$ that
sends $u$ to $-u$. We have
$$
\tau(\e(ch^{II}_{HN}(\cP)))  = \tr(\exp(R')),
$$
where $R' := u \n^2  - [\n, \d]$. So, in the setting of Example \ref{classical}, applying $\tau$ recovers the classical formula for the Chern character of a projective module on the nose. The reason we do not just define the HKR map to be $\tau \circ \epsilon$, thereby eliminating this sign discrepancy, is that applying $\tau$ destroys the $k[[u]]$-linearity of $\epsilon$.
\end{ex}

In Sections \ref{cdgc}  and \ref{Hochschild}, we recall some basic definitions and results concerning curved dg categories and their associated Hochschild-type invariants. All of the results
in these sections have appeared, or have been hinted at, in the literature. We define the Chern character of the second kind in Section \ref{chernsection}, and we prove the main theorem in Section \ref{themaintheorem}.
Section \ref{examplesection} contains several examples. In Appendix \ref{appA}, we justify our choice of sign convention in the Hochschild complex of the second kind; our convention differs from the one in \cite{PP}. Appendix \ref{appendixtechnical} is devoted to a proof of a key technical result (Theorem~\ref{existence}) needed for
the proof of the main theorem.

\vskip\baselineskip
\noindent{\bf Notational conventions:} 
\begin{itemize}

\item $k$ denotes a field.

\item Throughout, we wish to work simultaneously with both $\Z$-graded and $\Z/2$-graded $k$-vector spaces, algebras, etc.,
and so we let $\G$ denote either $\Z$ or $\Z/2$ and refer to $\G$-graded $k$-vector spaces, $k$-algebras, etc. 
We adhere to cohomological indexing, so that the degree $j$ part of a $\Z$-graded vector space $V$ is written $V^j$. When subscripts are used, it is understood
that $V_j$ denotes $V^{-j}$.

\item A {\em differential} on a $\G$-graded $k$-vector space is a homogeneous endomorphism $d$ of degree $1$. Note that we do {\em not}
  assume $d^2 = 0$ in general.

\item An element of a $\G$-graded $k$-vector space $V$ shall always be taken to mean a homogeneous element, unless otherwise stated. 

\item $[ - , - ]$ means graded commutator: for any $\G$-graded ring $A$ and (homogeneous) elements $x, y \in A$, we set
$[x,y] := xy - (-1)^{|x||y|} yx$.  We say a graded ring is {\em commutative} if 
$[x,y] = 0$ for all (homogeneous) elements $x$ and $y$, and $x^2 = 0$ for all elements $x$ of odd degree (the latter is redundant if $2$ is invertible in $A$).

\item If $V$ is a $\G$-graded $k$-vector space, the {\em suspension} $\S V$  of $V$ is given by setting $(\S V)^i = V^{i + 1}$. If $V$ is equipped with a
  differential $d$, then $\S V$ is equipped with the differential $-d$. We write $s: V \to \S V$ for the evident degree $-1$ map sending an element
$v \in V^i$ to itself but regarded as an element of 
  $(\S V)^{i-1}$.  

\end{itemize}
\vskip\baselineskip
\noindent{\bf Acknowledgements.} We thank the referee for catching an error in the original proof of Theorem \ref{existence}.

\section{Curved differential graded categories}
\label{cdgc}
We recall some general notions  concerning curved differential graded categories, closely following Polishchuk-Positselski \cite[Section 1]{PP}.

\subsection{Basic definitions}
\label{basic}

\begin{defn}
\label{cdgdefn} Let $k$ be a field, and let $\G$ be either  $\Z$ or $\Z/2$. 
A \emph{$k$-linear curved differential $\G$-graded category} $\cA$ (or just ``cdg category'', for short) consists of the following data:
\begin{itemize}

\item a collection of objects $\Ob(\cA)$;

\item for each pair  $X,Y \in \Ob(\cA)$ of objects, a $\G$-graded $k$-module $\Hom_{\cA}(X,Y)$ equipped with a $k$-linear differential $\d_{X,Y}$ (i.e., a
  degree one endomorphism that need not square to $0$);

\item a composition law: that is, for any $X,Y,Z \in \Ob(\cA)$, a morphism of $\G$-graded $k$-modules
$$
\mu_{X,Y,Z}: \Hom_{\cA}(Y,Z) \otimes_k \Hom_{\cA}(X,Y) \to \Hom_{\cA}(X,Z);
$$
\item for all $X \in \Ob(\cA)$, an identity morphism $1_X \in \Hom_{\cA}(X,X)$;
\item for every $X \in \Ob(\cA)$, a degree 2 element $h_X \in \Hom_{\cA}(X,X)$ called a \emph{curvature element}.
\end{itemize}
We will write $\d$ for $\d_{X,Y}$ and $\mu$ for $\mu_{X,Y,Z}$ when the underlying objects are understood. We will sometimes also use the usual $ - \circ - $ notation for composition, rather than $\mu$. 

The above data is required to satisfy the following conditions:
\begin{itemize}
\item For all $W, X, Y, Z \in \Ob(\cA)$, the diagram
$$
\xymatrix{
		\Hom_{\cA}(Y, Z) \otimes \Hom_{\cA}(X, Y) \otimes \Hom_{\cA}(W, X) \ar[r]^-{\id \otimes \mu}\ar[d]^-{ \mu \otimes \id}&  \Hom_{\cA}(Y, Z) \otimes \Hom_{\cA}(W, Y) \ar[d]^{\mu}\\
	\Hom_{\cA}(X, Z) \otimes \Hom_{\cA}(W, X) \ar[r]^-{\mu} &\Hom_{\cA}(W, Z)}
$$
commutes.

\item For each $X \in \Ob(\cA)$, the compositions
$$
\Hom_{\cA}(X,Y) \xrightarrow{\cong}  \Hom_{\cA}(X,Y) \otimes k \xrightarrow{ \id \otimes 1_X} \Hom_{\cA}(X,Y) \otimes \Hom_{\cA}(X,X) \xrightarrow{\mu}
\Hom_{\cA}(X,Y)
$$
and
$$
\Hom_{\cA}(X,Y) \xrightarrow{\cong} k  \otimes  \Hom_{\cA}(X,Y) \xrightarrow{ 1_Y \otimes \id} \Hom_{\cA}(Y,Y) \otimes \Hom_{\cA}(X,Y) \xrightarrow{\mu}
\Hom_{\cA}(X,Y)
$$
are equal to the identity.

\item For all objects $X$, $Y$, and $Z$, the square
$$
\xymatrix{
\Hom_{\cA}(X, Y) \otimes \Hom_{\cA}(Y, X) \ar[rr]^{\delta \otimes \id + \id \otimes \delta}
\ar[d]^{\mu}  
&&  \Hom_{\cA}(X, Y) \otimes \Hom_{\cA}(Y, X) \ar[d]^{\mu} \\
\Hom_{\cA}(X,Z) \ar[rr]^{\delta } && \Hom_{\cA}(X,Z). 
}
$$
commutes. Explicitly, given elements
$f \in \Hom_{\cA}(X, Y)$ and $g \in \Hom_{\cA}(Y, Z)$, we have
$$
\d(\mu(g \otimes f)) = \mu(\d(g) \otimes f) + (-1)^{|g|}\mu(g \otimes \d(f)).
$$

\item For all objects $X$ and $Y$
and all $f \in \Hom_{\cA}(X, Y)$, we have
$$
\d^2(f) = \mu(h_Y \otimes f) - \mu(f \otimes h_X).
$$

\item $\d(h_X) = 0$ for all $X \in \Ob(\cA)$.
\end{itemize}
A cdg category in which  $h_X = 0$ for all $X$ is called a \emph{$k$-linear differential $\Gamma$-graded category}, or just a {\em dg category}
for short.
\end{defn}

\begin{ex}
A basic example of a cdg category is given by \emph{precomplexes of $k$-vector spaces}:
\begin{itemize}
\item an object of $\on{Pre}(k)$ is a $\G$-graded $k$-vector space $V$ equipped with a $k$-linear endomorphism  $d_V$ of degree $1$;

\item $\Hom_{\on{Pre}(k)}(W, V)$ is the $\G$-graded $k$-vector space whose degree $n$ component is $\prod_j \Hom_k(W^j, V^{j+n})$, 
and it is equipped with the differential which sends $f$ to $d_{V} f -
(-1)^{|f|}f d_W$;

\item the curvature element $h_V \in \Hom_{\on{Pre}(k)}(V, V)$ is
  $d_V^2$; and

\item the unit and composition maps are the obvious ones.

\end{itemize}

\end{ex}

\begin{defn}
Let $\cA$ and $\cA'$ be cdg categories. A \emph{cdg functor} $F: \cA \to \cA'$ is given by the following data:
\begin{itemize}
\item a function $F: \Ob(\cA) \to \Ob(\cA')$;
\item for all $X,Y \in \Ob(\cA)$, a morphism of graded $k$-vector spaces 
$$
F_{X,Y}: \Hom_{\cA}(X,Y) \to \Hom_{\cA'}(F(X),F(Y));
$$
\item for all $X \in \Ob(\cA)$, a degree 1 element $a_X \in \Hom_{\cA'}(F(X), F(X))$.
\end{itemize}

We will write $F$ for $F_{X,Y}$ when the underlying objects are understood. These data must satisfy the following conditions:
\begin{itemize}
\item $F$ preserves composition: that is, for all $X, Y, Z \in \Ob(\cA)$, 
$$
\mu_{F(X), F(Y), F(Z)} \circ (F_{Y,Z} \otimes F_{X,Y}) = F_{X,Z} \circ \mu_{X,Y,Z};
$$

\item $F$ preserves identities: i.e., for each $X \in \Ob(\cA)$ we have
$$
F_{X,X} \circ 1_X = 1_{F(X)};
$$

\item for all $X, Y \in \Ob(\cA)$ 
and  $f \in \Hom_{\cA}(X, Y)$, 
$$
F(\d(f)) =\d(F(f)) + \mu(a_Y \otimes F(f)) - (-1)^{|f|} \mu(F(f) \otimes a_X);
$$

\item for all $X \in \Ob(\cA)$, 
$$
F(h_X) = h_{F(X)} + \d (a_X) + a_X^2.
$$

\end{itemize}

A cdg functor is called \emph{strict} if $a_X=0$ for all $X$. A strict cdg functor between dg categories is called a \emph{dg functor}. 
 
A (strict) \emph{quasi-cdg functor}, or (strict) \emph{qdg functor}, $F: \cA \to \cA'$ consists of the same data as a (strict) cdg functor, and it satisfies the
same conditions as a cdg functor except for the last one:  we do not require the equation relating $F(h_X)$ and $h_{F(X)}$ to hold. Composition of qdg functors $F: \cA \to \cA'$ and $G: \cA' \to \cA''$ is defined in the obvious way on objects and morphisms, and, for each $X \in \cA$, the distinguished degree $1$ element of $\Hom_{\cA''}((G \circ F)(X), (G \circ F)(X))$ is given by $G(a_X) + b_{F(X)}$, where $\{a_X\}_{X \in \cA}$ and $\{b_{X'} \}_{X' \in \cA'}$ are the families of distinguished elements associated to $F$ and $G$.
\end{defn}

\begin{ex} 
\label{functorexample}
A cdg category with just one object is a called a  \emph{curved differential graded algebra}, or {\em cdga}. This amounts to a triple $(A,d,h)$, where $A$ is a
unital $\G$-graded  $k$-algebra, 
$d$ is a degree one $k$-linear endomorphism of
$A$, and $h$ is a degree 2 element of $A$ such that 
\begin{itemize}
\item $d$ satisfies the Leibniz rule: $d(ab) = d(a) b + (-1)^{|a|} a d(b)$;
\item $d^2(a) = ha - ah$;
\item $d(h) = 0$. 
\end{itemize}
A {\em curved algebra} is a  cdga with trivial differential, i.e. a $\G$-graded $k$-algebra $A$ equipped with a specified degree $2$ element
$h$ such that $ha = ah$ for all $a \in A$. We will often write a cdga  as a triple $(A, d,h)$ and a curved algebra as a pair $(A,h)$.

Unravelling the definitions, a morphism $(A,d,h) \to (A', d',h')$ of cdga's is given by a pair $(\rho, \b)$, with $\rho: A \to A'$ a homomorphism of unital 
$\G$-graded $k$-algebras and $\b \in A'$ a degree one element, such that 
\begin{itemize}
\item $\rho(d(a)) - d'(\rho(a)) = [\b, \rho(a)]$ for all $a \in A$, and
\item $\rho(h)  = h' + d'(\b) + \b^2$.
\end{itemize}
A strict morphism $(A,d,h) \to (A', d',h')$ is given by a map $\rho: A \to A'$ of $\G$-graded $k$-algebras such that $\rho \circ d = d' \circ \rho$ and
$\rho(h) = h'$.

As a simple example of a non-strict morphism of cdga's, take $A$ to be an exterior algebra on a set of degree $1$ variables, and consider it as a curved algebra with trivial curvature. Let $\beta \in A$ be a degree $1$ element. Then
$$(\id_A, \beta): (A, 0) \to (A, 0)$$
is a non-strict morphism.
\end{ex}

\begin{ex}
The following example of a cdga explains the presence of the word ``curved" in the terminology introduced above. Let $X$ be a smooth manifold and $V$ a smooth complex vector bundle
on $X$.
Let $\Omega^{\bullet}(X)$ denote the complexified de Rham complex of $X$, and let $\n$ be a connection on $V$, i.e. a $\C$-linear map
$$
\Gamma(V) \to  \G(V) \otimes_{\Omega^0(X)} \Omega^1(X)
$$
such that $\n(fs) = f\n(s)  + s \otimes df$ for $s \in \Gamma(V)$ and $f \in \Omega^0(X)$. $\n$ extends uniquely to a $\C$-linear derivation $\widetilde{\n}$ of $\G(V) \otimes_{\Omega^0(X)} \Omega^{\bullet}(X)$. Set $R:=\widetilde{\n}^2|_{\G(V)}$; $R$ is called the \emph{curvature} of $\n$. Note that $R$ may be identified with an element of the $\C$-algebra $\End_{\Omega^0(X)}(\G(V)) \otimes_{\Omega^0(X)} \Omega^{\bullet}(X)$.

The connection $\n$ on $V$ induces a connection $\n'$ on $\End(V)$ given by $[\n, -]$ which yields a derivation $\widetilde{\n'}$ of $\End_{\Omega^0(X)}(\G(V))  \otimes_{\Omega^0(X)} \Omega^{\bullet}(X)$. The triple
$$
(\End_{\Omega^0(X)}(\G(V)) \otimes_{\Omega^0(X)} \Omega^{\bullet}(X), \widetilde{\n'}, R)
$$
is a curved differential $\C$-algebra \cite[Section 4.1]{pos}. Notice that the curvature of this cdga coincides with the curvature of $\n$.
\end{ex}
\begin{defn}
If $\cA$ is a cdg category, its \emph{opposite}, $\cA^\op$, is defined as follows:
\begin{itemize}
\item $\Ob(\cA^\op) = \Ob(\cA)$;
\item $\Hom_{\cA^\op}(X, Y) := \Hom_{\cA}(Y, X)$, equipped with the differential $\d_{Y, X}$;
\item for $X, Y, Z \in \Ob(\cA^\op)$ and elements $f \in \Hom_{\cA^\op}(X,Y)$ and  $g \in \Hom_{\cA^\op}(Y,Z)$, $\mu^\op_{X,Y,Z}(g \otimes f) = (-1)^{|f||g|}\mu_{Z,Y,X}(f \otimes g)$;
\item for all $X \in \Ob(\cA^\op)$, the curvature element in $\Hom_{\cA^\op}(X, X)$ is $-h_X$.
\end{itemize}
Given a cdg functor $F: \cA \to \cA'$, the \emph{opposite} functor $F^\op:\cA^\op \to (\cA')^\op$ is defined in the evident way on objects and morphisms, and,
for each object $X \in \cA$, the distinguished degree $1$ element of $\Hom_{(\cA')^\op}(F(X), F(X))$ is $-a_X$.  
\end{defn}

\begin{ex} \label{ExOp}
If $\cA = (A, d, h)$ is a cdga, $\cA^\op$ is the cdga $(A^\op, d, -h)$, where $A^\op$ has the multiplication rule $\star$ given by $x \star y = (-1)^{|x||y|}
yx$. Given a morphism $(\rho, \b): (A, d, h) \to (A', d', h')$ of cdga's,
$$
(\rho, -\b): (A^\op, d, -h) \to ((A')^\op, d', -h')
$$
is the induced opposite map.
\end{ex}

Polishchuk-Positselski introduce in \cite{PP} the notion of a \emph{pseudo-equivalence} of cdg categories, and it will be a useful notion in this paper:

\begin{defn} \label{pseudo}
Let $\cA$ be a cdg category, and fix an object $X \in \cA$. 
\begin{itemize}
\item $X$ is called the \emph{direct sum} of a family of objects $\{X_\alpha\}_{\a \in I}$  of $\cA$ if there exist degree $0$ morphisms $i_\alpha : X_\alpha \to X$ for each $\a
  \in I$  such that 
\begin{enumerate}
\item $\d_{X_\a, X}(i_\alpha) = 0$ for all $\alpha$;
\item the induced map $\Hom_\cA(X, Y) \to \prod_{\alpha}\Hom_{\cA}(X_\alpha, Y)$ is an isomorphism of $\G$-graded $k$-vector spaces for all $Y$.
\end{enumerate}
\end{itemize}
In this situation, each $X_\alpha$ is referred to as a {\em direct summand} of $X$.

\begin{itemize}
\item Fix a degree $1$ endomorphism $\tau$ of $X$. An object $Y \in \cA$ is called a \emph{twist of $X$ with $\tau$} if there exist degree $0$ morphisms $i: X \to
  Y$ and $j: Y \to X$ such that $j\circ i = \id_X$, $i \circ j = \id_Y$, and $j \circ \d_{X,Y}(i) = \tau$. 
 
\item Given $n \in \Z$, an object $Y \in \cA$ is called an \emph{$n^{\on{th}}$ shift} of $X$ if there exist morphisms $i: X \to Y$ and $j : Y \to X$ of degrees $n$ and
  $-n$, respectively, such that $ji = \id_X$, $ij = \id_Y$, and $\d_{X,Y}(i) = 0 = \d_{Y,X}(j)$. 

\item A cdg functor $F: \cA \to \cA'$ is a \emph{pseudo-equivalence} if $F$ induces isomorphisms 
$$
\Hom_{\cA}(Y, Z) \xra{\cong} \Hom_{\cA'}(F(Y),F(Z)) 
$$
for all $Y, Z \in \cA$, and every object of $\cA'$ may be obtained from objects of the form $F(X)$ for $X \in \Ob(\cA)$  via the operations of finite direct sum, shift, twist, and
passage to a direct summand. 
\end{itemize}
\end{defn}

\subsection{Modules over cdg categories}
\label{modules}

Fix a cdg category $\cA$.
\begin{defn}
A \emph{left $\cA$-module} (resp. \emph{left quasi-$\cA$-module}) is a strict cdg (resp. qdg) functor
$$
F: \cA \to \on{Pre}(k).
$$
Right $\cA$-modules and quasi-$\cA$-modules are defined similarly, replacing $F: \cA \to \on{Pre}(k) $ with $F: \cA^\op \to \on{Pre}(k)$.
\end{defn}

\begin{ex}
\label{representable}
Fix an object $X$ of $\cA$. The 
representable functor given, on objects, by $Y \mapsto \Hom_{\cA}(X, Y)$ (resp. $Y \mapsto \Hom_{\cA}(Y, X)$) is a left (resp. right) quasi-module. Each is a module provided
$h_X = 0$.
\end{ex}

\begin{defn}
\label{perfectquasi}
A (left or right) quasi-$\cA$-module is called \emph{perfect} if the induced ordinary functor between the $k$-linear $\G$-graded categories underlying $\cA$ and $\on{Pre}(k)$
is, up to a shift, a direct summand of a finite direct sum of representable functors.
\end{defn}

Let us unravel these definitions in the case where $\cA$ is a cdga:

\begin{ex}  \label{ex630}
Suppose $\cA$ is a cdga $(A, d, h)$. Then we identify a left quasi-module $F: \cA \to \on{Pre}(k)$ with the value of $F$ on the unique object of $\cA$. That is, 
a left quasi-module over $\cA$ is pair $(M,\d_M)$, where $M$ is a graded left $A$-module, and $\d_M: M \to M$ is a $k$-linear map of degree $1$
satisfying the Leibniz rule:
$$
\d_M(a m) = d(a) m + (-1)^{|a|} a \d_M(m) \text{ for all $a \in A$, $m \in M$.}
$$
A left quasi-module $(M, \d_M)$ is a left module if
$\d_M^2 = \l_h$, where $\l_h$ denotes left multiplication by the curvature element $h$ of $A$. A left quasi-module $(M, \d_M)$ is perfect if $M$ is finitely
generated and projective as a graded left $A$-module.

Using Example \ref{ExOp}, one sees that a right quasi-module/module over $(A,d,h)$ is the same thing as a left quasi-module/module over $(A^\op, d,-h)$. In other words, a
right quasi-module over $(A,d,h)$ consists of  a graded right $A$-module $M$  and a $k$-linear map $\d_M: M \to M$ of degree one that satisfies $\d_M(ma) =
\d_M(m) a + (-1)^{|m|} m d(a)$. Such a pair $(M, \d_M)$ is a right module if $\d_M^2 = - \rho_h$, where $- \rho_h(m) = -mh$. 

Notice that there is no natural sense in which $(A, d)$ is a left or right $\cA$-module, unless $h = 0$, but $(A, d)$ is both a left and right
quasi-$\cA$-module in the standard ways.  Henceforth, we will abuse
notation slightly and say that $\cA$ is a (left and right) quasi-$\cA$-module. 
\end{ex}

\begin{rem}
\label{linear}
Recall that, given a $\G$-graded ring $R$ and $\G$-graded left $R$-modules $N, N'$, a degree $e$ map $f: N \to N'$ of $\G$-graded abelian groups is \emph{$R$-linear} if 
$$
f(rn) = (-1)^{|r|e}rf(n)
$$ 
for all elements $n \in N$ and $r \in R$. Observe that, if $\cA$ is a curved algebra and $(M, \d_M)$ is a left $\cA$-module, then $\d_M$ is $A$-linear.
\end{rem}

It is observed in \cite[Section 1]{PP} that left quasi-$\cA$-modules form a cdg category $\on{qMod}(\cA)$, and left $\cA$-modules form a dg category
$\on{Mod}(\cA)$. Let $\qPerf(\cA)$ denote the cdg subcategory of $\on{qMod}(\cA)$ consisting of perfect left quasi-modules, and let $\Perf(\cA)$ denote the dg
subcategory of $\on{Mod}(\cA)$ consisting of perfect left modules. 

\begin{ex}
\label{perfexample}
When $\cA$ is a cdga $(A, d, h)$, the cdg category $\qPerf(\cA)$ is defined as follows:

\begin{itemize}

\item Objects are pairs $(P, \d_P)$ that are isomorphic to summands of quasi-modules of the form 
$\S^{n_1}A \oplus \cdots \oplus \S^{n_r}A$ for some $r \geq 0$ and $n_i \in \Z$. Note that, in particular, $P$ is
a finitely generated graded projective left $A$-module. 

\item The morphisms $(P, \d_P) \to (P', \d_{P'})$ are given by the $\G$-graded $k$-module $\Hom_A(P, P')$ of $A$-linear maps from $P$ to $P'$,
equipped with the differential $\del = \del_{P,P'}$  defined by 
$$
\del(g) = \d_{P'} \circ g - (-1)^{|g|} g \circ \d_P
$$
on homogeneous maps $g$;
\item composition is defined in the obvious way;
\item the curvature element associated to an object $(P, \d_P)$ is $h_P = \d_P^2 - \l_h$, where $\l_h$ denotes left multiplication by the element $h$.
\end{itemize}

Notice that an object $\cP \in \qPerf(\cA)$ has trivial curvature if and only if $\d_P^2 = \l_h$, which is precisely the extra property required of a
quasi-module to make it a module. Thus,
$\Perf(\cA)$ is the full cdg subcategory of $\qPerf(\cA)$ consisting 
of those objects whose endomorphism dga's have trivial curvature. In particular, $\Perf(\cA)$ is a dg category.
\end{ex}

\begin{rem} \label{rem72c}
If $\cP = (P, \d_P)$ is a perfect {\em right} quasi-module over $\cA = (A,d,h)$ (i.e., an object in $\qPerf(\cA^\op)$), 
then its endomorphism cdga is
$(\End_A(P), [\d_P,-], \d_P^2 + \rho_h)$, where $\rho_h$ denotes right multiplication by $h$.
\end{rem}

\begin{rem} 
\label{mfremark}
If $\cA = (A, h)$ is a curved algebra, and $A$ is commutative upon forgetting the grading, a perfect right $\cA$-module $(P, \d_P)$ is precisely the data of a ``graded matrix 
  factorization of $-h$". In detail, set $P_\ev = \bigoplus_j P^{2j}$ and $P_\odd = \S^{-1} \left(\bigoplus_j P^{2j+1}\right)$, 
each of which is a graded $A$-module in the evident way. Then the
  differential $\d_P$ determines a pair of $A$-linear maps
$$
\a: P_\odd \to   P_\ev \and \b: P_\ev \to  \S^2 P_\odd 
$$
such that $\b \circ \a = -h$ and $\S^2(\a) \circ \b = -h$. In particular, when $\G = \Z/2\Z$, the category of perfect right $\cA$-modules is
identical to the differential 
$\Z/2$-graded category $\MF(A, -h)$ of matrix factorizations of $-h$, as defined in \cite[Definition 2.1]{dyc}.
\end{rem}

\begin{ex}
\label{perfpseudo}
Let $\cA$ be a cdga, let $\cP=(P, \d_P)$ be an object of $\Perf(\cA)$, and define $\cP^\nat:=(P, 0) \in \qPerf(\cA)$. Then $\cP^\nat$ and $\cP$ are twists of one another. Thus, letting $\{\cP, \cP^\nat\}$ denote the full cdg subcategory of $\qPerf(\cA)$ consisting of these two objects, we see that the inclusions
$$
\{\cP\} \into \{\cP, \cP^\nat\}
$$
$$
\{\cP^\nat \} \into \{\cP, \cP^\nat\}
$$
are pseudo-equivalences of cdg categories. This fact will be used in the proof of Proposition~\ref{comparison}.
\end{ex}

For any cdg category $\cA$, there is a ``quasi-Yoneda'' embedding 
\begin{equation} \label{E73a}
qY_\cA: \cA \into \on{qPerf}(\cA^\op)
\end{equation} 
given, on objects, by $X \mapsto \Hom_{\cA}(-, X)$. If $\cA$ has trivial curvature, then the image of $qY_\cA$ lands in the full subcategory $\Perf(\cA^\op)$,
giving the more classical Yoneda embedding. We will write $Y_\cA$ for this functor.

\begin{ex} 
\label{endAop}
Suppose $\cA$ is a cdga $(A, d, h)$, which we regard as a cdg category with one object $*$ whose endomorphism cdga is $A$. 
Under the identification of Example \ref{ex630}, the object $qY_\cA(*) \in \on{qPerf}(\cA^\op)$ is given by the pair $(A, d)$, where $A$ is regarded as a  right module over itself.
Moreover, the endomorphism cdga $\End_{\qPerf(\cA^\op)}(qY_\cA(*))$
has as its underlying algebra the collection $\End_A^{\mathrm{right}}(A)$ of right $A$-module endomorphisms of $A$ with multiplication given
by composition.
There is a strict isomorphism of cdga's 
$$
\cA  \xrightarrow{\cong} \End_{\qPerf(\cA^\op)}(qY_\cA(*)) = (\End_A^{\mathrm{right}}(A), [d, -], \rho_h)
$$
that sends $a \in A$ to $\l_a$, where $\l_a(x) = ax$.
\end{ex}

The following result is proven in \cite[Lemma A, page 5319]{PP}:

\begin{prop}[Polishchuk-Positselski]
\label{PPpseudo}
For any cdg category $\cA$, the inclusions 
$$
\Perf(\cA^\op) \hookrightarrow \qPerf(\cA^\op) \hookleftarrow \cA
$$
are pseudo-equivalences.
\end{prop}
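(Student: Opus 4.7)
My plan is to handle the two claimed pseudo-equivalences separately.

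For the right-hand inclusion $\cA \hookrightarrow \qPerf(\cA^\op)$, which is the quasi-Yoneda embedding $qY_\cA$ of \eqref{E73a}, I would first establish full faithfulness by a direct Yoneda-style argument: for $X, X' \in \Ob(\cA)$, one has a natural map
$$\Hom_\cA(X, X') \to \Hom_{\qPerf(\cA^\op)}(qY_\cA(X), qY_\cA(X'))$$
given by post-composition, and the cdg axioms on $\cA$ (in particular associativity of composition together with the Leibniz rule $\d(gf) = \d(g)f + (-1)^{|g|}g\d(f)$) force this to be a bijection of $\G$-graded $k$-modules intertwining the differentials. Example \ref{endAop} already records this compatibility when $X=X'$ and $\cA$ is a cdga. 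The second requirement for a pseudo-equivalence is then immediate from Definition \ref{perfectquasi}: every perfect quasi-module is by construction a direct summand of a shifted finite direct sum of representables, which is exactly the image of $qY_\cA$ closed under those three operations.

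The left inclusion $\Perf(\cA^\op) \hookrightarrow \qPerf(\cA^\op)$ is a full subcategory inclusion, so full faithfulness is automatic; the content is that every $\cP \in \qPerf(\cA^\op)$ is reachable from $\Perf(\cA^\op)$ by the four allowed operations. Here I would use a cone-type construction, illustrated in the cdga case $\cA=(A,d,h)$: given $\cP = (P, \d_P)$, set $c := \d_P^2 + \rho_h \in \End_A(P)$ and form
$$\cQ := \left(P \oplus \Sigma^{-1} P, \, \begin{pmatrix} \d_P & -c \\ \id & -\d_P \end{pmatrix}\right).$$
The identity $d(h)=0$ forces $\d_P \rho_h = \rho_h \d_P$ and hence $[\d_P, c] = 0$, so a quick $2\times 2$ block computation gives that the square of the displayed matrix equals $-\rho_h$ times the identity; thus $\cQ$ is a genuine perfect module. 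On the same underlying graded $A$-module, the object $\cP \oplus \Sigma^{-1}\cP$ carries the block-diagonal differential $\d_P \oplus (-\d_P)$, and the off-diagonal correction $\begin{pmatrix} 0 & -c \\ \id & 0 \end{pmatrix}$ is $A$-linear of degree $1$, exhibiting $\cP \oplus \Sigma^{-1}\cP$ as a twist of $\cQ$ in the sense of Definition \ref{pseudo}. Since $\cP$ is a direct summand of $\cP \oplus \Sigma^{-1}\cP$, the chain ``perfect module $\cQ \leadsto$ twist $\leadsto$ direct summand $\cP$'' finishes the case.

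The main obstacle I foresee is upgrading this cone construction from cdga's to general cdg categories: there $\cP$ is a strict qdg functor rather than a pair $(P,\d_P)$, so the matrix formula must be re-expressed functorially and the module axioms checked pointwise. I expect this reduces to the cdga case by restricting to the endomorphism cdga of a finite direct sum of representables that realizes $\cP$ as a summand, in parallel with Example \ref{perfpseudo} which already treats the closely related passage between a perfect module $\cP$ and its naked form $\cP^\nat$ via a single twist. A smaller technicality is checking in the first paragraph that the Yoneda bijection is a morphism of complexes in the curved setting, which is a direct consequence of the Leibniz axiom for $\d$ interacting with composition in $\cA$.
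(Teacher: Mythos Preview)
The paper does not give its own proof of this proposition: it simply cites \cite[Lemma A, page 5319]{PP}. So there is no argument to compare against directly; your sketch stands on its own and is in the spirit of the Polishchuk--Positselski proof.

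Your outline is essentially correct, with one gap in the right-hand inclusion. You assert that every perfect quasi-module is obtained from representables using ``those three operations'' (direct sum, shift, summand), but Definition~\ref{perfectquasi} only says the \emph{underlying graded functor} is a shifted summand of a finite direct sum of representables --- it says nothing about the differential. Concretely in the cdga case: if $(P, \d_P) \in \qPerf(\cA^\op)$ with $P$ a graded summand of a free module $F$, the object built from representables is $(F, d_F)$ with its canonical differential, and the inclusion $P \hookrightarrow F$ has no reason to be closed for an arbitrary $\d_P$, so $(P,\d_P)$ is not in general a direct summand of $(F, d_F)$ in the sense of Definition~\ref{pseudo}. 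You need a twist here too: $(F, d_F)$ is a twist of $(F,0)$, $(P,0)$ is a direct summand of $(F,0)$, and $(P,\d_P)$ is a twist of $(P,0)$, exactly as in Example~\ref{perfpseudo}. Since you already deploy twists for the left inclusion, this is a routine omission rather than a conceptual error.

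Your cone construction for $\Perf(\cA^\op) \hookrightarrow \qPerf(\cA^\op)$ is correct: the block computation gives $D^2 = -\rho_h$ using $d(h)=0 \Rightarrow [\d_P, c]=0$, and the twist/summand passage from $\cQ$ to $\cP$ via $\cP \oplus \Sigma^{-1}\cP$ works as stated. Your caveat about lifting to general cdg categories is fair but not a real obstruction; the construction is functorial in the endomorphism cdga of the underlying representable summand, which is all that is needed.
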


\section{Mixed Hochschild complexes and the HKR theorem}
\label{Hochschild}
In this section, we define the mixed Hochschild complexes of the first
and second kinds associated to a curved differential $\G$-graded category, and we discuss some of their
properties. All of the results in this section are stated, or at least alluded to, in the literature; see, for instance, 
\cite{efimov}, \cite{PP}, or \cite{shkly}. 

\subsection{Mixed Hochschild complexes}
\label{MCunital}

We begin by recalling the notion of a mixed complex. Recall that $k$ denotes a field, and $\G \in \{\Z, \Z/2\}$. Let $k[B]$ be the commutative $\G$-graded $k$-algebra freely generated by  
a degree $-1$ element $B$, so that $k[B]$ is a two dimensional $k$-vector space,  
spanned by $1$ and $B$, with 
$B^2 = 0$. 
A {\em mixed complex} over $k$ is a dg-$k[B]$-module, where $k[B]$ is considered as a dg algebra with trivial differential. Equivalently, a mixed complex is a $\G$-graded $k$-vector space $M$
equipped with endomorphisms $b$ and $B$ of degrees $1$ and $-1$, respectively, such that $b^2 = 0 = B^2$ and $[b,B] = bB + Bb = 0$. We will write mixed
complexes as triples $(M, b, B)$. The notions of morphisms and quasi-isomorphisms of mixed complexes are the standard ones for
dg-$k[B]$-modules. In detail, a morphism $(M, b, B) \to (M', b', B')$ of mixed complexes is given by a graded map of $k$-vector spaces of degree $0$ that
commutes with $b,b'$ and $B, B'$. Such a morphism is a quasi-isomorphism if it is so regarded as a map $(M,b) \to (M', b')$ 
of $k$-complexes. 

We associate to any curved differential $\G$-graded category $\cA$ two mixed complexes, $\tHoch(\cA)$ and  $\tHoch^{II}(\cA)$ (cf. \cite{efimov}),
called {\em the (ordinary) Hochschild complex} and  {\em Hochschild complex of the second kind}.
We give detailed constructions when $\cA$ is a cdga $(A,d,h)$ (i.e., a cdg category with just one object), and then we sketch how to extend the definitions to
the general setting.

For a cdga $\cA = (A,d,h)$, define the $\G$-graded $k$-vector space
$$
\tHoch(\cA) = \bigoplus_{n \geq 0} A \otimes_k (\S A)^{\otimes n},
$$ 
where $\S A$ is the shift of $A$, i.e. $(\S A)^i = A^{i+1}$. 
As a standard bit of shorthand, for elements $a_0, a_1, \dots, a_n \in A$, we set
$$
a_0 [a_1| \dots| a_n] := a_0 \otimes s a_1 \otimes \cdots  \otimes s a_n  \in A \otimes_k (\S A)^{\otimes n}
$$
where $s: A \to \S A$ is the canonical degree $-1$ map.
Thus,  an element of degree $e$ in $\tHoch(\cA)$ is a finite sum of elements of the form
$a_0 [a_1| \dots| a_n]$ such that $|a_0| + |a_1| + \cdots + |a_n| - n = e$. 

Equip $\tHoch(\cA)$ with a differential  $b = b_2 + b_1 + b_0$, where
\begin{equation} \label{b2formula}
\begin{aligned}
b_2(a_0 [a_1| \dots| a_n]) &  =  
 (-1)^{|a_0|}  a_0a_1[a_2 | \cdots | a_n] \\
& + \sum_{j=1}^{n-1}  (-1)^{|a_0| + \cdots + |a_j| - j}a_0[a_1| \cdots | a_j a_{j+1}| \cdots | a_n] \\
& - (-1)^{(|a_n| - 1)(|a_0| + \cdots + |a_{n-1}| -(n-1))} a_na_0[a_1| \cdots |a_{n-1}], \\
\end{aligned}
\end{equation} 
\begin{equation} \label{b1formula}
\begin{aligned} 
b_1(a_0 [a_1| \dots| a_n]) 
&  =   d_A(a_0) [a_1| \cdots | a_n] \\ 
&  +  \sum_{j=1}^n (-1)^{|a_0| + \cdots  + |a_{j-1}|  -  j} a_0 [a_1| \cdots |d(a_j)| \cdots | a_n], \\ 
\end{aligned}
\end{equation}
and
\begin{equation} \label{b0formula}
\begin{aligned}
b_0(a_0 [a_1| \dots| a_n]) 
&  =   \sum_{j=0}^n (-1)^{|a_0| +|a_1| + \cdots + |a_{j}| - j } a_0 [a_1| \cdots |a_{j}| h|a_{j+1}| \cdots | a_n]. \\ 
\end{aligned}
\end{equation}

Define $\tHoch^{II}(\cA)$ to be the $\G$-graded $k$-vector space
$$
\tHoch^{II}(\cA) = \prod_{n \geq 0} A \otimes_k (\S A)^{\otimes n}.
$$ 
The relationship between $\tHoch(\cA)$ and $\tHoch^{II}(\cA)$ can be described in terms of 
the descending filtration $F^0 \supseteq F^1 \supseteq \cdots$ on $\tHoch(\cA$) 
defined by
\begin{equation} \label{E721}
F^j = \bigoplus_{n \geq j} A \otimes_k (\S A)^{\otimes n}.
\end{equation}
The complex $\tHoch^{II}(\cA)$ is the completion of $\tHoch(\cA)$ for the topology defined by this filtration. 
Since the differential $b$ on $\tHoch(\cA)$ is continuous with respect to this topology, it induces a differential on $\tHoch^{II}(\cA)$, which we also write as 
$b$.

\begin{rem}
\label{PPsigns}
The complex $\tHoch(\cA)$ (resp. $\tHoch^{II}(\cA)$) also appears in \cite[Section 2.4]{PP} under the name $\Hoch_{\bullet}^{\oplus}(\cA, \cA)$
(resp. $\Hoch_{\bullet}^{\sqcap}(\cA, \cA)$), but with a differential which differs from ours by a sign. The map  
$$a_0[a_1 | \cdots | a_n] \mapsto (-1)^{\sum_{j = 0}^{n-1}(n-j)|a_j|}a_0[a_1 | \cdots | a_n]$$
yields isomorphisms of complexes of $k$-vector spaces
$$
\Hoch_{\bullet}^{\oplus}(\cA, \cA) \xra{\cong} \tHoch(\cA) \and
\Hoch_{\bullet}^{\sqcap}(\cA, \cA) \xra{\cong} \tHoch^{II}(\cA).
$$
The reason for the discrepancy is that the underlying graded $k$-vector space of $\Hoch_{\bullet}^{\oplus}(\cA, \cA)$ is
$\bigoplus_{n \ge 0} A \otimes \S^n(A^{\otimes n})$, rather than $\bigoplus_{n \ge 0} A \otimes_k (\S A)^{\otimes n}$ (and similarly for $\Hoch_{\bullet}^{\sqcap}(\cA,
\cA)$). Appendix~\ref{appA} contains an explanation 
for the signs appearing in our formulas for $b_0$, $b_1$, and $b_2$.
\end{rem}

More generally, if $\cA$ is any cdg category whose objects form a set, define $\tHoch(\cA)$ and $\tHoch^{II}(\cA)$ to be the graded vector spaces
$$
\bigoplus_{X \in \cA} \Hom_{\cA}(X) \oplus \bigoplus_{n \ge 1} \bigoplus_{X_0, \dots, X_n \in \cA} \Hom_{\cA}(X_1, X_0) \otimes \S\Hom_{\cA}(X_2, X_1) \otimes
\cdots \otimes \S\Hom_{\cA}(X_0, X_n)
$$
and
$$
\bigoplus_{X \in \cA} \Hom_{\cA}(X) \oplus \prod_{n \ge 1} \bigoplus_{X_0, \dots, X_n \in \cA} \Hom_{\cA}(X_1, X_0) \otimes \S\Hom_{\cA}(X_2, X_1) \otimes
\cdots \otimes \S\Hom_{\cA}(X_0, X_n),
$$
equipped with differentials defined using the same formulas, suitably interpreted, as in the cdga case.
When $\cA$ is \emph{essentially small}, so that the isomorphism classes of objects in the $\G$-graded category
underlying $\cA$ form a set (see \cite[Section 2.6]{PP}), we define
$\tHoch(\cA)$ and $\tHoch^{II}(\cA)$ by first replacing $\cA$ with a full subcategory consisting of a single object from each isomorphism class.

\begin{defn} The {\em (ordinary) Hochschild homology groups} and {\em Hochschild homology groups of the second kind} of a cdg category $\cA$ are given by
$$
\HH_q(\cA) := H^{-q} \tHoch(\cA) \and \HH^{II}_q(\cA) := H^{-q} \tHoch^{II}(\cA).
$$
\end{defn}

\begin{rem} \label{rem629}
 As discussed in \cite[Section 2.4]{PP}, ordinary Hochschild homology has limited value in the presence of non-trivial curvature. For example if $\cA = (A, d, h)$ is a cdga
 with $h \ne 0$, then $\HH_*(\cA) = 0$. 
\end{rem}

\begin{rem} 
We have chosen to define Hochschild homology of the second kind in terms of the complex $\tHoch^{II}$, 
but, in \cite{PP}, it is defined in a different manner, using $\Tor$
of the second kind. By \cite[Proposition A in  Section 2.4]{PP}, their definition coincides with the one given here.
\end{rem}

It will be useful to also have ``reduced'' versions of Hochschild homology and Hochschild homology of the second kind. If $\cA = (A,d,h)$ is a 
cdga, let $\overline{A}$ denote the graded $k$-vector space $A/k \cdot 1_A$. Set
$$
\oHoch(\cA) :=
\bigoplus_{n \geq 0} A \otimes_k ( \S \overline{A})^{\otimes n},
$$
and similarly for $\oHoch^{II}(\cA)$. These are quotients of 
$\tHoch(\cA)$ and $\tHoch^{II}(\cA)$, respectively, and the differentials on these non-reduced complexes descend to differentials on $\oHoch(\cA)$ and $\oHoch^{II}(A)$, which we also write as $b$. 
The definitions of $\oHoch(\cA)$ and $\oHoch^{II}(\cA)$ extend to the case where $\cA$ is a general cdg category by modding out by $\id_X \in
\Hom_{\cA}(X,X)$ for all $X$ in $\cA$.

\begin{prop}  \label{normalized-Hoch}
If $\cA$ is a curved differential $\G$-graded category, the canonical surjection
$$
\tHoch^{II}(\cA) \onto \oHoch^{II}(\cA)
$$
is a quasi-isomorphism.
\end{prop}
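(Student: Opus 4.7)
The plan is to show that the kernel $K$ of the surjection is acyclic, whence the conclusion follows from the long exact sequence in cohomology. Concretely, $K \subseteq \tHoch^{II}(\cA)$ consists of (possibly infinite) sums of tensors $a_0[a_1|\cdots|a_n]$ in which at least one $a_j$ with $j \ge 1$ is a scalar multiple of an identity morphism. I would first verify that $K$ is a subcomplex: for $b_2$, the standard bar-complex cancellation eliminates the potentially non-degenerate terms, since merging a slot $a_j = 1$ with its left neighbor and with its right neighbor yields the same element with opposite signs; for $b_1$, stability uses $d(1_X) = 0$, which follows from $d(1_X) = d(1_X \cdot 1_X) = 2 d(1_X)$ in characteristic zero; and for $b_0$, stability is immediate since inserting $h$ does not affect the identity slots.

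Next, I would equip $\tHoch^{II}(\cA)$ and $K$ with the filtration $F^p$ from \eqref{E721}, which is complete and Hausdorff. Under this filtration, $b_2$ lowers filtration by one, $b_1$ preserves it, and $b_0$ raises it by one. Forgetting $b_0$ and $b_1$, the complex $(K, b_2)$ is the degenerate subcomplex of the simplicial $k$-module of Hochschild chains for the underlying graded associative category of $\cA$; by the Dold--Kan normalization theorem, there is an explicit degree $-1$ contracting homotopy $s$ satisfying $b_2 s + s b_2 = \id_K$, and in the standard construction (inserting an identity in a selected slot) $s$ strictly raises filtration, $s(F^p \cap K) \subseteq F^{p+1} \cap K$.

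Finally, I would invoke the basic perturbation lemma with perturbation $\delta = b_0 + b_1$. Since $\delta$ is non-decreasing in filtration while $s$ is strictly increasing, the composite $\delta s$ strictly raises filtration, so the geometric series $\sum_{n \ge 0}(\delta s)^n$ converges to a well-defined $k$-linear endomorphism of $K$ by completeness of the filtration; an analogous statement holds for $\sum_{n \ge 0}(s\delta)^n$. The perturbation lemma then produces a contracting homotopy $\tilde{s}$ for the full differential $b = b_2 + \delta$ on $K$, showing that $K$ is acyclic. The main technical point is this convergence step: completion is essential for it, and the analogous statement for the uncompleted complex $\tHoch(\cA)$ fails in the presence of nontrivial curvature, consistent with Remark \ref{rem629}.
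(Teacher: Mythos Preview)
Your approach is correct and genuinely different from the paper's.

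The paper filters $\tHoch^{II}(\cA)$ by \emph{internal degree} $|a_0|+\cdots+|a_n|$ (the filtration $G^j$ defined in the proof), under which $b_1$ and $b_0$ strictly increase while $b_2$ is preserved. Via colimit, inverse-limit, and five-lemma arguments, this reduces the question to showing that $G^l/G^{l+1}\to \oG^l/\oG^{l+1}$ is a quasi-isomorphism, where the induced differential is $b_2$ alone and the problem becomes the classical ungraded normalization statement (Loday, Proposition 1.6.5).

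You instead work directly on the kernel $K$ with the tensor-length filtration $F^p$: you use Dold--Kan to get a level-wise contracting homotopy $s$ for $(K,b_2)$ (which therefore raises $F^\bullet$ strictly), and then run the homological perturbation lemma with $\delta=b_1+b_0$, the series $\sum(\delta s)^n$ converging by completeness of $F^\bullet$. This is more constructive --- it produces an explicit contracting homotopy for $(K,b)$ --- and it makes transparent exactly where completion enters, which is a nice conceptual payoff and dovetails with the paper's Remark~\ref{rem629}. The paper's argument, by contrast, avoids perturbation machinery and isolates the classical input cleanly, at the cost of more filtration bookkeeping.

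One small correction: your remark that $d(1_X)=0$ ``in characteristic zero'' understates the situation. From $d(1_X)=d(1_X\cdot 1_X)=2\,d(1_X)$ one gets $d(1_X)=0$ over any field, so no hypothesis on $\operatorname{char}(k)$ is needed there (and indeed the proposition is stated without one).
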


\begin{proof}
We give a proof in the case where $\cA$ is a cdga $(A,d,h)$; the general case differs only in its notational complexity.
For $j \in \Z$, let $G^j \subseteq \tHoch^{II}(\cA)$ denote the $k$-vector space comprised of cochains $(\a_0, \a_1, \dots) \in \prod_{n \ge 0} A \otimes_k (\S A)^{\otimes n}$ such
that each $\a_n$ is of the form $\sum_{i = 1}^{l_n} a_{0,i}[a_{1,i} | \dots | a_{n, i}]$, where $|a_{0,i}| + \cdots + |a_{n,i}| \ge j$ for each $i$. Notice that $b(G^j) \subseteq G^j$, and we have a filtration
$$
\cdots \subseteq G^{j+1}  \subseteq G^{j} \subseteq \cdots \subseteq (\tHoch^{II}(\cA), b)
$$
of complexes. Let $\pi$ denote the canonical surjection $\tHoch^{II}(\cA) \onto \oHoch^{II}(\cA)$,
and define $\oG^j = \pi(G^j)$, so that $\oG^{\bu}$ is a filtration of $(\oHoch^{II}(\cA), b)$
and $\pi$ induces a map of filtrations $G^{\bullet} \to \oG^{\bullet}$.

We have
$\colim(\cdots \into G^{j+1} \into G^j \into \cdots ) = \tHoch^{II}(\cA)$, and similarly for the filtration of $\oHoch^{II}(\cA)$. Fix $j \in \Z$; we need only show $G^j
\to \oG^j$ is a quasi-isomorphism for all $j$. We have that 
$G^j$ is the inverse limit of the tower 
$$
 \cdots \onto G^j / G^{j +2} \onto  G^j / G^{j +1},
$$
and $\oG^j$ is the inverse limit of the tower 
$$
 \cdots \onto \oG^j / \oG^{j +2} \onto  \oG^j / \oG^{j +1}.
$$
The evident map joining these two towers induces the map $\pi: G^j \to \oG^j$, and so it suffices to
prove $G^j/G^{j+m} \to  \oG^j/\oG^{j+m}$ is a quasi-isomorphism for all $m \geq 1$. 
For each such $m$, we have a morphism of short exact sequences
$$
\xymatrix{
 0 \ar[r] & G^{j + m}/G^{j+m + 1} \ar[d] \ar[r] & G^j/ G^{j + m + 1}  \ar[r] \ar[d] & G^j/G^{j + m } \ar[r] \ar[d] & 0 \\
0 \ar[r] & \oG^{j + m}/\oG^{j+m+1}  \ar[r] & \oG^j/ \oG^{j+m + 1}  \ar[r] & \oG^j/\oG^{j + m } \ar[r]  & 0 \\
}
$$
induced by $\pi$. Thus, by induction on $m$, it suffices to show that 
$$  
\pi: G^{l}/G^{l+1} \to \oG^{l}/\oG^{l+1} 
$$
is a quasi-isomorphism for all $l \in \Z$. This allows us to assume $d = 0 = h$ and that 
$A$ is concentrated in degree 0 (and so, in particular, $\tHoch^{II}(\cA) = \tHoch(\cA)$). But in this case, the result is well-known; see, for instance, \cite[Proposition 1.6.5]{Loday}.
\end{proof}

\begin{rem} The analogous statement for the ordinary Hochschild complex cannot be proven in the above manner, because the analogue of $G^j$ is not 
  the inverse limit of the analogous tower. 
\end{rem}

We next endow each of $\tHoch(\cA)$, $\tHoch^{II}(\cA)$, $\oHoch(\cA)$ and $\oHoch^{II}(\cA)$
with the structure of mixed complexes by introducing the Connes $B$ operator. The
following construction generalizes the standard one for ordinary algebras; see,
for instance, \cite[Section 2.1.7]{Loday}.

As before, we start with the case where $\cA = (A,d,h)$ is a  cdga. 
For each $n \geq 0$, define
\begin{equation} \label{E330}
\tau_{n+1}: A \otimes_k (\S A)^{\otimes n} \to A \otimes_k (\S A)^{\otimes n} 
\end{equation}
by
$$
\tau_{n+1} (a_0 [ a_1 | \cdots | a_n]) = (-1)^{(|a_0| - 1)(|a_1| + \cdots + |a_n| - n)} a_1 [ a_2 | \cdots | a_n | a_0],
$$
and define $s_0: \tHoch(\cA) \to \tHoch(\cA)$ by
$$
s_0 (a_0 [ a_1 | \cdots | a_n]) = 1 [a_0 | a_1 | \cdots | a_n].
$$
The \emph{Connes $B$ operator} is the degree $-1$ endomorphism $B$ of $\tHoch(\cA)$ whose restriction to $A \otimes (\S A)^{\otimes n}$ is given by 
$$
B|_{A \otimes (\S A)^{\otimes n}} = (1 - \tau^{-1}_{n+2}) \circ s_0 \circ \sum_{l=0}^n \tau_{n+1}^l.
$$

The Connes $B$ operator induces an endomorphism on the quotient $\oHoch(\cA)$ of $\tHoch(\cA)$, and we write the induced map also 
 as $B$. It is given by the simpler formula 
$$
B|_{A \otimes ( \S \overline{A})^{\otimes n}} = s_0 \circ \sum_{l=0}^n \tau_{n+1}^l,
$$
since $\tau^{-1}_{n+2} \circ s_0 \circ \sum_{l=0}^n \tau_{n+1}^l$ is the zero map on $\oHoch(\cA)$.
In detail:
\begin{equation} \label{Bformula}
B(a_0[\oa_1| \cdots |\oa_n]) = \sum_{l=0}^n (-1)^{(|a_l| + \cdots +|a_n| - (n-l+1)) (|a_0| + \cdots +|a_{l-1}| - l)   } 
1[\oa_l | \cdots |\oa_n| \oa_0 | \cdots | \oa_{l-1}].
\end{equation}
For example, if each $a_i$ has even degree, then 
$$
B(a_0[\oa_1| \cdots |\oa_n]) = \sum_{l=0}^n (-1)^{nl} 1[\oa_l | \cdots |\oa_n| \oa_0 | \cdots | \oa_{l-1}].
$$

On $\oHoch(\cA)$, we can equivalently define $B$ as follows. 
For each $m \geq 0$, define
$$
\s_{m}: (\S A)^{\otimes m} \to (\S A)^{\otimes m} 
$$
by
$$
\begin{aligned}
\s_{m} ([ a_1 | \cdots | a_m]) & = (-1)^{|sa_1|(|s a_2| + \cdots + |sa_m|)} [ a_2 | \cdots | a_m | a_1] \\
& = (-1)^{(|a_1|-1)(|a_2| + \cdots + |a_m| - (m-1))} [ a_2 | \cdots | a_m | a_1]. \\
\end{aligned}
$$
Then
$$
B|_{A \otimes ( \S \overline{A})^{\otimes n}} = \sum_{l=0}^n (\id_A \otimes \s_{n+1}^l) \circ s_0.
$$

The Connes $B$ operator is continuous for the topology on 
$\tHoch(\cA)$ given by the filtration $F^0 \supseteq F^1 \supseteq \cdots$ defined in \eqref{E721}, and we also write $B$ for the induced map on the completion
$\tHoch^{II}(A)$, and on $\oHoch^{II}(A)$ as well.

The definition of the operator $B$  extends to cdg
categories in an evident manner.
We record the following (cf. \cite[Section 3.1]{efimov}):

\begin{prop} If $\cA$ is a curved differential $\G$-graded category, we have $b B + B  b = 0$ and $B^2 = 0$, so that 
each of $(\tHoch(\cA), b, B)$, $(\oHoch(\cA), b, B)$, $(\tHoch^{II}(\cA), b, B)$ and $(\oHoch^{II}(\cA), b, B)$ is a  
$\G$-graded mixed complex over $k$. 
\end{prop}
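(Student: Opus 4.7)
The plan is to reduce the statement to verifying three algebraic identities on the non-reduced, non-completed complex $\tHoch(\cA)$ in the case where $\cA$ is a cdga $(A,d,h)$, and then propagate the result to all four complexes. First I would observe that the operators $b$ and $B$ on $\tHoch^{II}(\cA)$ are obtained from those on $\tHoch(\cA)$ by continuity with respect to the filtration $F^{\bullet}$ of \eqref{E721}, so if $b^2=0$, $B^2=0$, and $bB+Bb=0$ hold on $\tHoch(\cA)$ they automatically hold on $\tHoch^{II}(\cA)$. A direct check shows that each of $b_0$, $b_1$, $b_2$, $\tau_{n+1}$, and $s_0$ descends to the reduced complexes (using $d(1)=0$ and that insertion of $h$ or $d(a_j)$ next to a $1$-tensor symbol is handled compatibly), so the identities on $\oHoch(\cA)$ and $\oHoch^{II}(\cA)$ follow. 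Finally, the case of an arbitrary essentially small cdg category $\cA$ differs only notationally from the single-object case: each tensor product $A \otimes (\S A)^{\otimes n}$ is replaced by a direct sum indexed by composable strings of morphisms $X_0 \to X_n \to \cdots \to X_1 \to X_0$, but the formulas and identities are formally identical.

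For $b^2=0$, I would decompose $b = b_0 + b_1 + b_2$ according to how the differential changes the bar length (by $-1$, $0$, and $+1$ respectively for $b_2, b_1, b_0$; here I use the conventions from \eqref{b2formula}--\eqref{b0formula}). Squaring and collecting terms of each bar-degree yields five identities to verify:
\begin{equation*}
b_2^2 = 0, \quad b_1 b_2 + b_2 b_1 = 0, \quad b_1^2 + b_0 b_2 + b_2 b_0 = 0, \quad b_0 b_1 + b_1 b_0 = 0, \quad b_0^2 = 0.
\end{equation*}
The first is the standard associativity calculation for the Hochschild differential of an associative $\G$-graded algebra. The second encodes the graded Leibniz rule $d(ab) = d(a)b + (-1)^{|a|} a d(b)$. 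The third is the curvature relation: the ``missed'' terms in $b_1^2$ are precisely the commutators $[h_{X_i},-]$ coming from $d^2 = [h,-]$, and these cancel against $b_0 b_2 + b_2 b_0$, which inserts $h$ and then multiplies it into an adjacent tensor factor. The fourth uses $d(h) = 0$ (so differentiating an inserted $h$ produces zero), and the last is automatic since $h$ has even degree and the two insertion positions give canceling signs. Each of these is a routine but sign-sensitive bookkeeping exercise, and I expect the signs to be the main obstacle; I would verify them by comparing to the conventions in Appendix~\ref{appA} and Remark~\ref{PPsigns}.

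For $B^2=0$, I would use the standard cyclic-operator identities. Writing $N_n = \sum_{l=0}^n \tau_{n+1}^l$, one has $N_n(1-\tau_{n+1}) = 0 = (1-\tau_{n+1})N_n$ and the key simplicial identity $s_0(1-\tau_{n+2}^{-1}) s_0 = 0$. Since $B = (1 - \tau^{-1}) s_0 N$, expanding $B^2$ produces $s_0 N (1 - \tau^{-1}) s_0$ as an inner factor, and $N(1-\tau^{-1})$ equals $0$ after rearranging powers of $\tau$, forcing $B^2=0$.

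For $bB + Bb = 0$, I would again split $b$ as $b_0 + b_1 + b_2$ and show $b_iB + Bb_i = 0$ for each $i$. The identity $b_2 B + B b_2 = 0$ is the classical mixed-complex relation and follows from the pre-simplicial/cyclic compatibility that Loday establishes in \cite[Section 2.1.7]{Loday} for ordinary algebras. For $b_1 B + B b_1 = 0$, one uses that $d$ is a derivation compatible with the cyclic rotations $\tau_{n+1}$ and with $s_0$ (since $d(1)=0$). For $b_0 B + B b_0 = 0$, the point is that inserting $h$ at various positions and then cyclically rotating produces the same terms (with opposite signs) as first rotating and then inserting $h$; this depends only on the fact that $h$ is central in the graded sense and has even degree, combined with $d(h)=0$ to handle the $s_0$ step. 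This last step, tracking how the insertion position for $h$ interacts with the cyclic sum, is where I expect the most delicate sign bookkeeping.
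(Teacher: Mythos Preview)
The paper does not give its own proof of this proposition: it is merely ``recorded'' with a reference to \cite[Section 3.1]{efimov}. Your plan therefore supplies strictly more than the paper does, and the direct-verification strategy you outline (reduce to a cdga, decompose $b=b_0+b_1+b_2$ by bar-degree, and check the resulting identities against the cdga axioms) is the standard and correct approach.

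A few remarks on the details. First, the identity $b^2=0$ is already handled in Appendix~\ref{appA} via the twisting-cochain formalism, so you need not redo it. Second, your argument for $B^2=0$ is right, but the reason is simply $N_{n+1}(1-\tau_{n+2}^{-1})=0$, which follows from $\tau_{n+2}^{n+2}=\id$ (the sign convention in \eqref{E330} makes $\tau_{n+1}$ the Koszul-signed cyclic rotation on $(\S A)^{\otimes(n+1)}$, so a full cycle is the identity); the extra ``simplicial identity $s_0(1-\tau^{-1})s_0=0$'' you invoke is not needed. Third, in the step $b_0B+Bb_0=0$ you write that $d(h)=0$ is used ``to handle the $s_0$ step'', but $s_0$ only inserts a unit and involves no differentiation; this piece is purely combinatorial and uses only that $|h|$ is even. (The axiom $d(h)=0$ is what makes $b_0b_1+b_1b_0=0$ work, as you correctly note elsewhere.) Finally, be aware that $s_0$ and $b_0$ do not literally commute---$b_0s_0$ produces an extra term with $h$ inserted immediately after the leading $1$---so the cancellation in $b_0B+Bb_0$ genuinely requires the full cyclic sum $N$ and the correction $(1-\tau^{-1})$, not just compatibility of $b_0$ with the individual ingredients. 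This is indeed where the bookkeeping is most delicate, as you anticipate.
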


\begin{defn} For a   cdg category $\cA$, define mixed complexes 
$$
\begin{aligned}
\tMC(\cA) & := (\tHoch(\cA), b, B) \\
\tMC^{II}(\cA) & :=(\tHoch^{II}(\cA), b, B), \\ 
\oMC(\cA) & :=(\oHoch(\cA), b, B), \text{ and} \\
\oMC^{II}(\cA) & :=(\oHoch^{II}(A), b, B).
\end{aligned}
$$ 
\end{defn}

The following is an immediate consequence of Proposition \ref{normalized-Hoch}:

\begin{prop} 
\label{normalized}
If $\cA$ is a   curved differential $\G$-graded category, the canonical morphism
$$
\alpha: \tMC^{II}(\cA) \to \oMC^{II}(\cA)
$$
of mixed complexes is a quasi-isomorphism.
\end{prop}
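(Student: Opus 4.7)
The plan is to observe that this proposition truly is immediate from Proposition~\ref{normalized-Hoch} once one unwinds the definition of a quasi-isomorphism of mixed complexes. Recall from the earlier discussion that a morphism of mixed complexes is, by definition, a degree~$0$ $k$-linear map commuting with both $b$ and $B$, and it is declared to be a quasi-isomorphism precisely when it is a quasi-isomorphism of the underlying $b$-complexes (not requiring any compatibility statement involving $B$ on cohomology).

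First I would verify that $\alpha: \tMC^{II}(\cA) \to \oMC^{II}(\cA)$ is in fact a well-defined morphism of mixed complexes. Compatibility with $b$ is built into the construction of $\oHoch^{II}(\cA)$: the differential $b$ on $\tHoch^{II}(\cA)$ descends to the quotient by the subspace generated by identity elements in the bar positions, and this is exactly the statement that $\alpha$ commutes with $b$. Compatibility with $B$ is similarly built in, since by the paragraph following \eqref{Bformula} the Connes operator on $\tHoch(\cA)$ passes to the quotient $\oHoch(\cA)$, and both constructions are continuous with respect to the filtration $F^\bullet$ of \eqref{E721}, so the same compatibility persists after completion.

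Next I would simply invoke Proposition~\ref{normalized-Hoch}, which asserts that the underlying map of $b$-complexes $\tHoch^{II}(\cA) \onto \oHoch^{II}(\cA)$ is a quasi-isomorphism. Combined with the preceding paragraph, this is exactly the definition of $\alpha$ being a quasi-isomorphism of mixed complexes, so the proof is complete.

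There is essentially no obstacle here; the only thing to be slightly careful about is the bookkeeping for the general cdg category case (rather than the cdga case treated in detail in Proposition~\ref{normalized-Hoch}), where one mods out by $\id_X \in \Hom_{\cA}(X,X)$ for every object $X$. But this is purely notational, as indicated in the paper's remark that the general case "differs only in its notational complexity," so no new argument is required.
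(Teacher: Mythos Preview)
Your proposal is correct and matches the paper's approach exactly: the paper simply states that the proposition is an immediate consequence of Proposition~\ref{normalized-Hoch}, and you have correctly unpacked why that is so (a quasi-isomorphism of mixed complexes is by definition a quasi-isomorphism of the underlying $b$-complexes, and the compatibility with $B$ is already built into the construction of $\oMC^{II}$). Your more careful verification that $\alpha$ is a morphism of mixed complexes and your remark about the general cdg category case are accurate elaborations of what the paper leaves implicit.
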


We now discuss functoriality.
It is established in \cite[Section 2.4]{PP} that, for each   functor (resp. strict   functor) 
$F: \cA \to \cA'$ of   cdg categories, there exists a morphism of complexes
$F_*: \tHoch^{II}(\cA) \to \tHoch^{II}(\cA')$ (resp., $F_*: \tHoch(\cA) \to \tHoch(\cA')$). Thus, 
$\tHoch^{II}(-)$ (resp., $\tHoch(-)$) is a functor from the category of   curved differential $\G$-graded categories, with morphisms given by   cdg
functors (respectively, strict   cdg functors), to the category of complexes of $k$-vector 
spaces.  

We recall the formula for $F_*$ in the case where $\cA$ and $\cA'$ are   cdga's; the general case is similar. Recall that a morphism of cdga's from
$(A,d,h)$ to $(A', d',h')$ is a pair $(\rho, 
\b)$, where $\rho: A \to A'$ is a morphism of graded algebras and $\b \in 
(A')^1$, satisying $\rho \circ d - d' \circ \rho = [\beta, \rho]$ and $\rho(h) = h' + d'(\beta) + \beta^2.$ The induced morphism
$$
(\rho, \b)_*: \tHoch^{II}(A,d,h) \to \tHoch^{II}(A',d',h') 
$$
sends a class of the form $a_0 [a_1| \dots| a_n]$ to 
\begin{equation} \label{E71b}
\sum_{i_0, \dots, i_n \geq 0} (-1)^{i_0 + \cdots + i_n} \rho(a_0) [\underbrace{\b | \cdots | \b}_{i_0 \text{ copies}} | \rho(a_1) |\underbrace{\b | \cdots | \b}_{i_1 \text{ copies}}| \rho(a_2) | \cdots | \rho(a_n) | \underbrace{\b | \cdots | \b}_{i_n \text{ copies}}],
\end{equation}
where the sum ranges over all ($n+1$)-tuples of non-negative integers. The map $(\rho, \b)_*$ extends to infinite sums in the evident way. Henceforth, for brevity,
we will write $(\rho, \b)_*(a_0 [a_1| \dots| a_n])$ in the following way: 
$$
\sum (-1)^{i_0 + \cdots + i_n} \rho(a_0) [\b^{i_0} | \rho(a_1) |\b^{i_1}| \rho(a_2) | \cdots | \rho(a_n) | \b^{i_n}].
$$
If $\b = 0$ (i.e., when the morphism is strict), the map $(\rho, 0)_*$ restricts to a map $\tHoch(A, d, h) \to \tHoch(A', d', h')$, 
but, since the sum occurring above is an infinite one when $\b \ne 0$, $\tHoch(-)$ is not functorial for arbitrary functors of cdg categories.

\begin{rem}
The sign appearing in the formula for $(\rho, \b)_*$ corresponds to the sign in line (15) of \cite[Section 2.4]{PP}  via the isomorphism discussed in Remark~\ref{PPsigns}.
\end{rem}

\begin{prop}
\label{functoriality}
If $F: \cA \to \cA'$ is a   cdg functor (resp. strict   cdg functor), then there is a morphism of mixed complexes 
$$
F_*:\oMC^{II}(\cA) \to \oMC^{II}(\cA') \text{ } (\text{respectively, } F_*: \oMC(\cA) \to \oMC(\cA'))
$$
determined by the formula \eqref{E71b}.
\end{prop}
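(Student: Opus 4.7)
The plan is to verify that the formula \eqref{E71b} determines a map of mixed complexes on the reduced Hochschild complex of the second kind. By \cite[Section 2.4]{PP} (up to the sign-change isomorphism of Remark~\ref{PPsigns}), the formula already defines a chain map $F_*: \tHoch^{II}(\cA) \to \tHoch^{II}(\cA')$. In the strict case, $\b = 0$, so the formula collapses to $a_0[a_1|\cdots|a_n] \mapsto \rho(a_0)[\rho(a_1)|\cdots|\rho(a_n)]$, which preserves the filtration \eqref{E721} and hence restricts to a chain map $\tHoch(\cA) \to \tHoch(\cA')$. It remains to show that (a) $F_*$ descends to the reduced complex and (b) $F_*$ commutes with the Connes operator $B$.

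Claim (a) is immediate: if some bar entry $a_j$ with $j \geq 1$ equals $1_A$, then $\rho(a_j) = 1_{A'}$ occupies a bar position of every term in $F_*(a_0[a_1|\cdots|a_n])$, so each such term vanishes in $\oHoch^{II}(\cA')$. For claim (b) in the strict case, the naive map $(\rho,0)_*$ commutes entrywise with both the cyclic permutation $\tau$ of \eqref{E330} and the extra-degeneracy $s_0$, hence also with $B$.

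For claim (b) in the general case, I would proceed by a direct term-by-term comparison on the reduced complex. Applied to $a_0[\oa_1|\cdots|\oa_n]$, each of $F_* \circ B$ and $B \circ F_*$ produces a formal sum of cochains of the shape $1[\b^{m_0}|\rho(\oa_l)|\b^{m_1}|\rho(\oa_{l+1})|\cdots|\rho(\oa_{l-1})|\b^{m_{n+1}}]$, uniquely parametrized by a pair $(l, \vec{m})$ with $l \in \{0, \ldots, n\}$ and $\vec{m} \in \Z_{\geq 0}^{n+2}$. On the $F_* \circ B$ side, this data is supplied directly: $l$ is the rotation index from $B$ and $\vec{m}$ is the insertion tuple from $F_*$. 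On the $B \circ F_*$ side, the same cochain arises from a unique pair $(\vec{i}, k)$, where $\vec{i}$ records the $\b$-counts between consecutive $\rho(\oa_j)$'s in the cyclic order around $\rho(\oa_0) = \rho(a_0)$ and $k$ is the rotation index of $B$ on the expanded bar sequence; explicitly, $i_j = m_{j-l+1}$ for $j \neq l-1$ (indices mod $n+1$) and $i_{l-1} = m_0 + m_{n+1}$, so $|\vec{i}| = |\vec{m}|$ and the factors $(-1)^{|\vec{i}|}$ and $(-1)^{|\vec{m}|}$ from \eqref{E71b} match. The cyclic-permutation signs from \eqref{E330} also match, because $\b$ has degree $1$ and therefore shifted degree $0$, so each $\b$ contributes nothing to the Koszul sign of any cyclic rotation; the rotation sign for the expanded sequence on the $B \circ F_*$ side thus reduces to the rotation sign for the compressed sequence on the $F_* \circ B$ side. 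The main obstacle is confirming the bijection and verifying the sign identification in all cases, particularly when the rotation index $k$ lands inside a $\b$-run; the observation that $\b$ has shifted degree $0$ makes this a tractable combinatorial check.
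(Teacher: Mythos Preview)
Your approach is correct and matches the paper's: both defer to Polishchuk--Positselski for compatibility with $b$, then verify $F_* \circ B = B \circ F_*$ on the reduced complex by a direct term-by-term bijection, using that $|s\beta| = 0$ so the $\beta$-insertions contribute nothing to the cyclic-permutation signs. The paper's write-up makes the bijection slightly more explicit, exhibiting the index correspondence $(\vec{j},l)\leftrightarrow(\vec{i},k)$ and confirming bijectivity via the binomial identity $(N+n+1)\binom{N+n}{n}=(n+1)\binom{N+n+1}{n+1}$, whereas you argue via unique parametrization of the output cochains; these are two phrasings of the same combinatorial check.
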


\begin{proof}
By results of Polishchuk-Positselski quoted above, we need only show that the maps 
$F_*:\oMC^{II}(\cA) \to \oMC^{II}(\cA')$ and $F_*: \oMC(\cA) \to \oMC(\cA')$ preserve Connes $B$ operators. It suffices to prove the statement involving $\oMC^{II}( - )$; the statement involving $\oMC(-)$ is an immediate consequence. Moreover, we will assume $\cA$ and $\cA'$ are cdga's; the general case is similar.

Given a morphism 
$$(\rho, \b): (A, d, h) \to (A', d', h')$$ 
of   cdga's, let us verify that 
$$(\rho, \b)_*: \oHoch^{II}(A, d, h) \to \oHoch^{II}(A', d', h')$$
commutes with the Connes $B$ operators. Throughout this proof, we shall use the following notation: given $a_0 [\oa_1 | \cdots | \oa_n]$,
\begin{itemize}
\item write $a_i'$ for $\rho(a_i)$, and
\item  $\e_l = (-1)^{(|sa_l| + \cdots + |sa_n|) (|sa_0| + \cdots + |sa_{l-1}|)   }
= (-1)^{(|a_l| + \cdots + |a_n| - (n-l+1)) (|a_0| + \cdots + |a_{l-1}| - l)   }.$
\end{itemize}

We have
$$
(B \circ (\rho, \beta)_*)(a_0[\overline{a_1} | \cdots | \overline{a_n}] )= \sum_{N = 0}^{\infty} \sum_{|\vec{i}| = N} (-1)^N B(a_0'[\overline{\beta}^{i_0} | \overline{a_1'} | \overline{\beta}^{i_1} |  \cdots | \overline{a_n'} | \overline{\beta}^{i_n}]),
$$
where the inner sum ranges over all $\vec{i} = (i_0, \dots, i_n)$ such that $|\vec{i}| :=i_0 + \cdots + i_n = N$. Evaluating $B$, we obtain
$$
(B \circ (\rho, \beta)_*)(a_0[\overline{a_1} | \cdots | \overline{a_n}] ) = \sum_{N = 0}^{\infty} \sum_{|\vec{i}| = N} \sum_{l = 0}^{n + N} (-1)^N   \sigma^l_{n + N + 1}([\overline{a_0'}|\overline{\beta}^{i_0} | \overline{a_1'} | \overline{\beta}^{i_1} |  \cdots | \overline{a_n'} | \overline{\beta}^{i_n}]),
$$
where $\sigma_m: (\Sigma A)^{\otimes m} \to (\Sigma A)^{\otimes m}$ is as defined above. On the other hand, 

$$
((\rho, \beta)_* \circ B)(a_0[\overline{a_1} | \cdots | \overline{a_n}] ) = (\rho, \beta)_*(\sum_{l = 0}^n \e_l [\overline{a_l} | \cdots | \overline{a_n} | \overline{a_0} | \cdots | \overline{a_{l-1}}])
$$
$$
= \sum_{N = 0}^{\infty} \sum_{|\vec{j}| = N} \sum_{l = 0}^n (-1)^N\e_l[\overline{\beta}^{j_0} | \overline{a_l'} | \overline{\beta}^{j_{1}} |\cdots | \overline{a_n'} | \overline{\beta}^{j_{n-l + 1}} | \overline{a_0'} | \overline{\beta}^{j_{n-l + 2}}| \cdots | \overline{a_{l-1}'} | \overline{\beta}^{j_{n+1}}],
$$
where the middle sum ranges over all $\vec{j} = (j_0, \dots, j_{n+1})$ such that $|\vec{j}| := j_0 + \cdots + j_{n+1} = N$. Fix $N \ge 0$. It will suffice to prove that 
\begin{equation}
\label{fixedN1}
\sum_{|\vec{i}| = N} \sum_{l = 0}^{n + N}  1 \cdot \sigma^l_{n + N + 1}([\overline{a_0'}|\overline{\beta}^{i_0} | \overline{a_1'} | \overline{\beta}^{i_1} |  \cdots | \overline{a_n'} | \overline{\beta}^{i_n}])
\end{equation}
is equal to 
\begin{equation}
\label{fixedN2}
\sum_{|\vec{j}| = N} \sum_{l = 0}^n \e_l[\overline{\beta}^{j_0} | \overline{a_l'} | \overline{\beta}^{j_{1}} |\cdots | \overline{a_n'} | \overline{\beta}^{j_{n-l + 1}} | \overline{a_0'} | \overline{\beta}^{j_{n-l + 2}}| \cdots | \overline{a_{l-1}'} | \overline{\beta}^{j_{n+1}}].
\end{equation}

Since there are $\binom{N + m -1}{m-1}$ ways of adding $m$ nonnegative integers up to $N$, there are $(N + n + 1)\binom{N + n }{n}$ terms in~(\ref{fixedN1}) and $(n+1)\binom{N + n
  + 1}{n+1}$ terms in~(\ref{fixedN2}).
Using that  $(N + n + 1)\binom{N + n }{n} = (n+1)\binom{N + n + 1}{n+1}$, 
we need only associate to each summand of~(\ref{fixedN1}) a distinct summand of~(\ref{fixedN2}) to
which it is equal, and it is easy to check that the summand of~(\ref{fixedN2}) corresponding to the pair  
$$(\vec{j} = (j_0, \dots, j_{n+1}), l)$$
is equal to the summand of~(\ref{fixedN1}) corresponding to the pair
$$( \vec{i} = (j_{n-l+2}, \dots, j_n, j_0 + j_{n+1}, j_1, \dots, j_{n - l + 1}), j_{n-l + 2} + \cdots + j_{n+1} + l).$$
\end{proof}

\begin{rem}
Let $F: \cA \to \cA'$ be a   cdg functor. The induced map
$$
F_*: \tHoch^{II}(\cA) \to \tHoch^{II}(\cA')
$$
does \emph{not} necessarily commute with $B$ operators, and it therefore does not induce a morphism of mixed complexes $\tMC^{II}(\cA) \to \tMC^{II}(\cA')$. 
For example, take $\cA$ to be the curved algebra (with trivial curvature) $(A, 0)$, where $A$ is an exterior algebra on two degree $1$ variables $e_1, e_2$, and consider the morphism
$$(\id_A, e_1): (A, 0) \to (A, 0)$$
of curved algebras from Example~\ref{functorexample}. Consider $e_2$ as an element of $\tHoch^{II}(\cA)$. Then 
$$((\id_A, e_1) \circ B)(e_2) = (\id_A, e_1)(1[e_2] - e_2[1])$$
\begin{equation}
\label{LHS}
= \sum_{N \ge 0} \sum_{i_0 + i_1 = N} (-1)^N (1[e_1^{i_0} | e_2 | e_1^{i_1}] - e_2[e_1^{i_0} | 1 | e_1^{i_1}]  ) .\end{equation}

On the other hand,
$$
(B \circ (\id_A, e_1))(e_2) = \sum_{N \ge 0} (-1)^N B(e_2[e_1^N])
$$
\begin{equation}
\label{RHS}
= \sum_{N \ge 0} (-1)^N ((1 - \tau_{N + 2}^{-1}) \circ s_0 \circ \sum_{l = 0}^N \tau^l_{N + 1})(e_2[e_1^N]).
\end{equation}

The summand of~(\ref{LHS}) corresponding to $N = 1$ is 
$$ 
-  1[ e_2 | e_1] + e_2[  1 | e_1]  -  1[e_1 | e_2 ] + e_2[e_1 | 1 ],
$$
and the summand of~(\ref{RHS}) corresponding to $N = 1$ is
\begin{align*}
(( \tau_{3}^{-1} - 1) \circ s_0 \circ \sum_{l = 0}^1 \tau^l_{2})(e_2[e_1]) &= ( \tau_3^{-1} -1) (1[e_2 | e_1] + 1[e_1 | e_2]) \\
&= e_1[1|e_2] + e_2[1 | e_1] -1[e_2|e_1] - 1[e_1| e_2].
\end{align*}
Since these summands differ, $((\id_A, e_1) \circ B)(e_2) \ne (B \circ (\id_A, e_1))(e_2)$. Notice that these summands coincide upon passing to $\oHoch^{II}(A, 0)$, as expected.
 \end{rem}

A key feature of the functor $\oMC^{II}(-)$ is that it sends pseudo-equivalences to quasi-isomorphisms. The following statement is proven in \cite[p. 5326]{PP}:

\begin{thm}[Polishchuk-Positselski]
\label{pseudoqi}
If $F: \cA \to \cA'$ is a pseudo-equivalence of   cdg categories, then
$F_*: \oMC^{II}(\cA) \to \oMC^{II}(\cA')$ is a quasi-isomorphism.
\end{thm}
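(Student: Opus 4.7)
The plan is to prove Theorem~\ref{pseudoqi} by decomposing the pseudo-equivalence into elementary operations and verifying each preserves the quasi-isomorphism class of $\oMC^{II}$. By Definition~\ref{pseudo}, every object of $\cA'$ is built from objects of the form $F(X)$ via iterated finite direct sum, shift, twist, and passage to a direct summand. Since $F$ induces isomorphisms on Hom-complexes, we may replace $\cA$ by its essential image in $\cA'$ and reduce to the case of an inclusion $\cB \hookrightarrow \cB' = \cB \cup \{X'\}$ where $X'$ is obtained from a single object of $\cB$ by exactly one of these four operations. Induction on the construction of $\cA'$ (transfinite if necessary) then reduces the theorem to showing each such single-object enlargement induces a quasi-isomorphism on $\oMC^{II}$.

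For the cases of finite direct sum, shift, and direct summand, the argument is essentially a Morita-invariance computation carried out on the normalized complex. In each case, the new cyclic chains (those involving at least one occurrence of $X'$) can be rewritten explicitly in terms of chains using only existing objects. For a direct sum $X' = X_1 \oplus X_2$ with structure maps $i_\alpha, p_\alpha$, every morphism into or out of $X'$ factors through the $i_\alpha$ and $p_\alpha$, and a standard bicomplex/deformation-retract argument on the normalized Hochschild complex gives a contracting homotopy for the quotient $\oMC^{II}(\cB')/\oMC^{II}(\cB)$; the use of \emph{reduced} complexes is what makes the retraction land cleanly. The direct-summand case is dual (contract using the complementary projector); the shift case is essentially an index relabelling. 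All of these mirror the classical Morita invariance arguments for Hochschild homology, and the passage to the second-kind completion is harmless because the relevant homotopies respect the filtration~\eqref{E721}.

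The main obstacle, and the step requiring the most care, is the twist case. Suppose $Y$ is a twist of $X \in \cB$ with $\tau \in \Hom_{\cB}(X,X)^1$, witnessed by degree-$0$ maps $i \colon X \to Y$, $j \colon Y \to X$ with $ji = \id_X$, $ij = \id_Y$, and $j \circ \d(i) = \tau$. The maps $i$ and $j$ induce an isomorphism of the underlying $\G$-graded categories obtained from $\cB$ and $\cB \cup \{Y\}$ by forgetting differentials, but the differential on $\Hom_{\cB \cup \{Y\}}(Y, Z) \cong \Hom_{\cB}(X, Z)$ is modified by postcomposition with $\tau$ (and similarly on the other side). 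To produce a quasi-inverse to the inclusion $\oMC^{II}(\cB) \hookrightarrow \oMC^{II}(\cB \cup \{Y\})$ one replaces every occurrence of $Y$ in a chain by $X$ and accounts for the differential twist by inserting a formal geometric series $\sum_{n \ge 0} (-\tau)^n$ in each tensor slot, in the spirit of formula~\eqref{E71b}. This sum is infinite, and this is precisely why the theorem holds for the second kind but not the first: the filtration~\eqref{E721} ensures the series converges in $\oMC^{II}$, since increasing $n$ increases the tensor degree. Verifying that the resulting map commutes with $b$ (and with $B$, using Proposition~\ref{functoriality}) and is inverse to the inclusion up to an explicit chain homotopy constructed from the same insertion procedure completes the twist case.

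Putting these pieces together yields the theorem. I expect the signs and convergence bookkeeping in the twist case to be the only genuinely delicate point; the other three operations should follow by straightforward adaptations of the classical Morita arguments to the normalized, completed setting.
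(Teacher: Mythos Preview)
The paper does not actually supply a proof of this theorem: it simply records the statement and cites \cite[p.~5326]{PP}. So there is no in-paper argument to compare against, only the original Polishchuk--Positselski proof.

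Your outline correctly isolates the conceptual heart of the matter --- that twists are the operation forcing one to pass to the second-kind complex, and that the quasi-inverse in that case is built by inserting the geometric series in $\tau$, exactly in the spirit of~\eqref{E71b}. The single-object cases you describe are all sound.

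However, there is a real gap in the reduction step. You propose to add objects of $\cA'$ one at a time and invoke (transfinite) induction. The successor steps are fine, but the limit steps are not: the functor $\oMC^{II}(-)$ does \emph{not} commute with filtered unions of full subcategories, precisely because of the product $\prod_{n\ge 0}$ in its definition. An element of $\oMC^{II}\bigl(\bigcup_\alpha \cB_\alpha\bigr)$ is a sequence $(c_n)_{n\ge 0}$ whose $n$-th term may involve objects lying in $\cB_{\alpha_n}$ with $\alpha_n \to \infty$, so it need not lie in the image of $\colim_\alpha \oMC^{II}(\cB_\alpha)$. Thus knowing that each $\oMC^{II}(\cB_0)\to\oMC^{II}(\cB_\alpha)$ is a quasi-isomorphism for $\alpha<\lambda$ does not by itself yield the same for $\alpha=\lambda$, and your sketch gives no mechanism to bridge this.

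The argument in \cite{PP} circumvents this entirely. Rather than an object-by-object induction, one filters $\oMC^{II}$ by total internal degree (as in the proof of Proposition~\ref{normalized-Hoch}); on the associated graded one is reduced to the underlying $\G$-graded $k$-linear categories with zero differential and curvature. There a pseudo-equivalence becomes an ordinary Morita equivalence (twists become isomorphisms once the differential is forgotten), so classical Morita invariance of Hochschild homology applies. One then concludes for $\oMC^{II}$ itself by the same completeness/spectral-sequence argument used in Proposition~\ref{normalized-Hoch}. This approach handles all objects of $\cA'$ simultaneously and never needs a limit-ordinal step.
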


It is well-known that Hochschild homology of dg categories is Morita invariant. Combining Proposition \ref{PPpseudo} and Theorem \ref{pseudoqi}, we obtain a
version of Morita invariance for mixed Hochschild complexes of   cdg categories which we dub ``quasi-Morita invariance'':

\begin{prop}
\label{thmPP}
Let $\cA$ be a   cdg category. The canonical inclusion 
$$\Perf(\cA)^\op \into \qPerf(\cA)^\op$$
and the quasi-Yoneda embedding 
$$qY_{\cA}: \cA  \into \qPerf(\cA)^\op$$ 
defined in  \eqref{E73a} 
both induce quasi-isomorphisms of mixed Hochschild complexes of the second kind:
$$
\oMC^{II}(\Perf(\cA)^\op) \xra{\simeq} \oMC^{II}(\qPerf(\cA)^\op) \xla{\simeq} \oMC^{II}(\cA).
$$
\end{prop}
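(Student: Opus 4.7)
The plan is to reduce the statement to Proposition~\ref{PPpseudo} combined with Theorem~\ref{pseudoqi}. First I would apply Proposition~\ref{PPpseudo} with $\cA$ replaced by $\cA^\op$, obtaining pseudo-equivalences
$$\Perf(\cA) \hookrightarrow \qPerf(\cA) \xleftarrow{qY_{\cA^\op}} \cA^\op.$$
Passing to opposite cdg categories and opposite cdg functors then yields
$$\Perf(\cA)^\op \hookrightarrow \qPerf(\cA)^\op \xleftarrow{qY_\cA} \cA,$$
where $qY_\cA := (qY_{\cA^\op})^\op$. Granting that the opposites are still pseudo-equivalences, Theorem~\ref{pseudoqi} applied to each arrow produces the two required quasi-isomorphisms on $\oMC^{II}$.

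The heart of the argument is therefore the claim that the operation $(-)^\op$ on cdg categories and cdg functors preserves pseudo-equivalences. To verify this, I would check that each of the four operations used in Definition~\ref{pseudo} --- finite direct sum, shift, passage to a direct summand, and twist --- behaves symmetrically under passing to the opposite category. The fully-faithfulness condition is preserved via the tautological identification $\Hom_{\cA^\op}(Y,Z) = \Hom_\cA(Z,Y)$. Direct sums of finite families, shifts, and direct summands are essentially immediate because the relevant data consist of degree-zero morphisms and identity relations whose formulation is self-dual with respect to the opposite construction.

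The only subtle point is the twist condition. If $Y$ is a twist of $X$ by $\tau \in \Hom_\cA(X,X)$ via degree-zero morphisms $i: X \to Y$ and $j: Y \to X$ satisfying $ji = \id_X$, $ij = \id_Y$, and $j \circ \d(i) = \tau$, then in $\cA^\op$ the Koszul composition rule $\mu^\op(g \otimes f) = (-1)^{|f||g|} \mu(f \otimes g)$, combined with $\d(\id_X) = 0$ (which follows from the Leibniz rule applied to $\id_X \circ \id_X = \id_X$, using $\on{char}(k) \ne 2$), shows that $Y$ is a twist of $X$ by $-\tau$ in $\cA^\op$; the sign is inconsequential for the notion of pseudo-equivalence. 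I expect this twist compatibility to be the main technical obstacle. Once it is in hand, the proof of Proposition~\ref{thmPP} consists of simply chaining the two cited theorems as described above.
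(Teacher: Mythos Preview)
Your strategy of combining Proposition~\ref{PPpseudo} with Theorem~\ref{pseudoqi} is exactly what the paper does, but you have been misled by a typo in the statement of Proposition~\ref{thmPP}: the target of $qY_\cA$ should be $\qPerf(\cA^\op)$, not $\qPerf(\cA)^\op$, as is clear from~\eqref{E73a} and from how the result is applied in the definition of $ch^{II}_{HN}$. With that corrected, Proposition~\ref{PPpseudo} applied directly to $\cA$ (not to $\cA^\op$) already gives the pseudo-equivalences
\[
\Perf(\cA^\op) \hookrightarrow \qPerf(\cA^\op) \hookleftarrow \cA,
\]
and Theorem~\ref{pseudoqi} then yields the quasi-isomorphisms on $\oMC^{II}$. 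No passage to opposite categories is needed.

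Your detour through opposites, if taken at face value, has genuine issues beyond being unnecessary. First, your definition $qY_\cA := (qY_{\cA^\op})^\op$ has codomain $\qPerf(\cA)^\op$, which is not the codomain $\qPerf(\cA^\op)$ of the paper's $qY_\cA$ in~\eqref{E73a}; these are different cdg categories, so you are not proving the statement about the map the paper actually uses. Second, the direct sum axiom in Definition~\ref{pseudo} is a coproduct-type universal property (maps \emph{out} of $X$), so in the opposite category it becomes a product-type condition; showing these coincide for finite families requires an additional argument you did not supply. Finally, the hypothesis $\chr(k) \ne 2$ is unnecessary: applying the Leibniz rule to $\id_X \circ f = f$ for arbitrary $f$ gives $\d(\id_X) \circ f = 0$, hence $\d(\id_X) = 0$ in any characteristic.
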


\subsection{Negative cyclic homology of cdg categories} \label{invariants}

Let $(M, b, B)$ be a mixed complex. The {\em negative cyclic complex} associated to $(M,b,B)$ is defined to be 
$(M[[u]], b + uB)$, where $u$ is a degree $2$ indeterminant, and $M[[u]]$ is the direct product totalization of the ($\G \times \Z$)-graded $k$-vector space
$$
M, 0, Mu, 0, Mu^2, \dots
$$
So, an element of $M[[u]]$ of degree $e$ is a
formal power series $\sum_i m_i u^i$ such that $m_i \in M$ has degree $e - 2i \in \G$ for each $i$. Since $b^2 = B^2 = [b,B] = 0$, $|b| = 1$ and $|B| = -1$,  
$(M[[u]], b + uB)$ is indeed
a $\G$-graded complex of $k$-vector spaces. 

\begin{ex} 
\label{field}
The negative cyclic complex of the mixed complex $(k, 0, 0)$ has trivial differential and is $k[[u]]$. 
An element of $k[[u]]$ of degree $e$ is a
formal power series $\sum_i \a_i u^i$ such that $\a_i \in k$ has degree $e - 2i \in \G$, for each $i$, and multiplication is given by the
usual rule for multiplying power series.  
If $\G = \Z$, then $k[[u]]$ is just the usual polynomial ring $k[u]$ with $|u| = 2$ (and so the notation $k[[u]]$ is misleading in this case). 
But if $\G = \Z/2$, then $k[[u]]$ is the ring of formal power series, concentrated in degree $0$. 
\end{ex}

The graded $k$-module $k[[u]]$ is a commutative $\G$-graded $k$-algebra, with polynomial/power series multiplication. For any mixed complex $(M,b,B)$, the complex
$(M[[u]], b + uB)$ is a dg-$k[[u]]$-module in an evident way.

We define $k((u))$ to be the $\G$-graded ring obtained from $k[[u]]$ by inverting the degree two element $u$. So, if $\G = \Z$, then $k((u))$ is the ring of
Laurent polynomials $k[u, u^{-1}]$ with $|u| = 2$, and if $\G = \Z/2$, then $k((u))$ is the ring of Laurent power series concentrated in degree $0$. 
The {\em periodic cyclic complex} of $(M,b,B)$ is the $\G$-graded dg-$k((u))$-module 
$$
(M((u)), b + uB) = 
(M[[u]], b + uB) \otimes_{k[[u]]} k((u))
$$
given by inverting $u$.

\begin{defn} Let $\cA$ be a cdg category. 
The \emph{(ordinary) negative cyclic complex} and the 
\emph{negative cyclic complex of the second kind} associated to $\cA$ are the negative cyclic complexes associated to  the mixed complexes $\tMC(\cA)$ and $\tMC^{II}(\cA)$,
respectively.
We write them as 
$$
\tHN(\cA) \and \tHN^{II}(\cA).
$$ 
Similarly, 
the \emph{(ordinary) periodic cyclic complex} and the \emph{periodic cyclic complex of the second kind} associated to $\cA$ are  the periodic cyclic complexes
associated to $\tMC(\cA)$ and $\tMC^{II}(\cA)$, 
and they are written as 
$$
\tHP(\cA) \and \tHP^{II}(\cA).
$$

Likewise, $\oHN(\cA)$, $\oHN^{II}(\cA)$, $\oHP(\cA)$ and $\oHP^{II}(\cA)$
are obtained from $\oMC(\cA)$ and $\oMC^{II}(\cA)$. 
By Proposition~\ref{normalized}, there are natural quasi-isomorphisms $\tHN(\cA) \xra{\sim} \oHN(\cA)$, etc. The negative and periodic cyclic homology groups, of both kinds, are defined as the homology of the complexes $\tHN(\cA)$, $\tHN^{II}(\cA)$, $\tHP(\cA)$ and $\tHP^{II}(\cA)$:
$$
\begin{aligned}
\HN_q(\cA) & := H^{-q} \tHN(\cA), \\
\HN^{II}_q(\cA) & := H^{-q} \tHN^{II}(\cA), \\
\HP_q(\cA) & := H^{-q} \tHP(\cA), \and \\
\HP^{II}_q(\cA) & := H^{-q} \tHP^{II}(\cA). \\
\end{aligned}
$$
(Since $H^{-q}(\tHN(\cA))$ is canonically isomorphic to $H^{-q}(\oHN(\cA))$, we do not introduce notation for the latter and its variants.)
\end{defn}

\subsection{The HKR theorem}
\label{HKRsection}

Recall that if $A$ is a classical $k$-algebra (i.e., an algebra concentrated in degree $0$), we say $A$  is {\em smooth} (over $k$) if it is isomophic to a 
commutative $k$-algebra of the form 
$B = k[x_1, \dots, x_n]/(f_1, \dots, f_m)$ such that the Jacobian matrix $\left(\partial f_i/ \partial x_j\right)$ has maximal rank at every point in $\Spec(B)$, and 
we say $A$ is  {\em essentially smooth} (over $k$)  
if it is isomorphic to $S^{-1}B$ for some smooth $k$-algebra $B$ and some multiplicatively closed subset $S$.

\begin{defn} 
\label{essentiallysmooth}
A $\G$-graded curved algebra $\cA = (A,h)$ (with trivial differential) is {\em essentially smooth over $k$} if 
\begin{itemize}
\item $A$ is commutative and concentrated in even degrees, and
\item upon forgetting the grading, $A$ is an essentially smooth $k$-algebra in the classical sense.
\end{itemize}
\end{defn}

\begin{ex} Suppose $A$ is an essentially smooth $k$-algebra in the classical sense, and let $h \in A$. Then 
$(A, h)$ is a $\Z/2$-graded essentially smooth curved algebra over $k$.
\end{ex}

\begin{ex} Suppose $Q$ is an essentially smooth $k$-algebra in the classical sense, let $f_1, \dots, f_c \in Q$, 
  and let $T_1, \dots, T_c$ be indeterminants of degree $2$. Then $(Q[T_1, \dots, T_c], \sum_i f_i T_i)$ is an essentially smooth curved $\Z$-graded algebra over $k$.
\end{ex}

Let $\cA = (A, h)$ be a $\G$-graded, essentially smooth curved $k$-algebra, and let $\O^\bullet_{A/k}$ denote the graded ring given by the exterior powers of the module of K\"ahler differentials of $A$ over $k$, $\G$-graded so that $a_0 da_1 \smsh
\cdots \smsh da_j$ has degree $\sum |a_i| - j$. We consider the mixed complex
$$
(\Omega^\bullet_{A/k}, dh, d),
$$
where $dh$ denotes the map given by {\em left} multiplication by the degree $1$ element $dh$, and $d$ is given by the de Rham differential (which, by our indexing convention, has degree $-1$).

The following is a generalization, to the setting of curved algebras, of the classical Hochschild-Kostant-Rosenberg Theorem.
See \cite[Proposition 3.14]{efimov} for a proof. 

\begin{thm}[``The HKR Theorem''] 
\label{HKR} 
Assume $\on{char}(k) = 0$. 
If $\cA = (A,h)$ is a $\G$-graded, essentially smooth curved $k$-algebra,
then there is a quasi-isomorphism of mixed complexes
$$
\epsilon: \oMC^{II}(\cA) \xra{\simeq} (\Omega^\bullet_{A/k}, dh, d)
$$
which sends elements of the form $a_0 [\overline{a_1} | \cdots |\overline{a_n}]$ to $\frac{1}{n!} a_0 da_1 \cdots da_n$. 
In particular, we have an isomorphism 
$$
\HN^{II}_q(\cA) \xrightarrow{\cong} H^{-q}(\Omega^\bullet_{A/k}[[u]], u d + dh)
$$
of graded $k[[u]]$-modules for all $q \in \Z$.
\end{thm}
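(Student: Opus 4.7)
The plan is to split the proof into two main steps. First, verify that $\epsilon$ is a morphism of mixed complexes, so that the induced map on negative cyclic complexes (via $M\mapsto (M[[u]], b+uB)$) is automatic. Second, show $\epsilon$ is a quasi-isomorphism by a filtration argument that reduces the curved statement to the classical HKR theorem for the underlying graded algebra $A$.

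For the chain-map verification, since $\cA = (A,h)$ has trivial differential we have $b_1 = 0$ and $b = b_2 + b_0$. I would verify three identities separately. First, $\epsilon \circ b_2 = 0$ is the classical cancellation: the multiplication and wrap-around terms in \eqref{b2formula} become, under $\epsilon$, de Rham expressions that cancel via graded commutativity of $A$ and the Leibniz rule for $d$, with signs matching cleanly because $A$ lives in even degrees. Second, $\epsilon \circ b_0 = dh \cdot \epsilon$ is direct bookkeeping: each of the $n{+}1$ insertions of $h$ in \eqref{b0formula} yields, after applying $\epsilon$ and commuting $dh$ past $j$ odd-degree forms $da_i$, the same term $\tfrac{1}{(n+1)!}\, a_0\, dh\wedge da_1\wedge\cdots\wedge da_n$ (the sign from $b_0$ cancels the Koszul sign from commutation, using that all $|a_i|$ are even), so the $n{+}1$ equal contributions sum to $dh\cdot \epsilon(a_0[\bar a_1|\cdots|\bar a_n])$. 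Third, $\epsilon \circ B = d\circ \epsilon$ is the classical HKR verification applied to \eqref{Bformula}: the cyclic permutations all collapse, with matching signs, to $\tfrac{1}{n!}\, d(a_0\, da_1\cdots da_n)$.

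For the quasi-isomorphism, I would equip both sides with compatible descending filtrations by \emph{internal degree}: let $F^p\oHoch^{II}(\cA)$ consist of those series all of whose components $a_0[\bar a_1|\cdots|\bar a_n]$ satisfy $|a_0|+\cdots+|a_n| \ge p$, and define $F^p\Omega^{\bullet}_{A/k}$ analogously. Both filtrations are preserved by the differentials ($b_2$ preserves internal degree, while $b_0$ and left multiplication by $dh$ raise it by $|h|=2$). On associated gradeds the $b_0$ and $dh$ contributions vanish, so $\gr^p(\epsilon)$ is the classical graded HKR map---from the reduced Hochschild complex of $A$ viewed purely as a graded $k$-algebra to $\Omega^{\bullet}_{A/k}$---restricted to internal degree $p$. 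Since $A$ is essentially smooth over a field of characteristic zero, the classical HKR theorem applies slice by slice, showing $\gr(\epsilon)$ is a quasi-isomorphism.

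The main obstacle is convergence of the associated spectral sequence for the \emph{product} totalization $\oHoch^{II}(\cA)$, since the internal-degree filtration need not be exhaustive in general. When $\cA$ is $\Z$-graded with $A$ concentrated in non-negative degrees, the reduced constraint $|\bar a_i| \ge 2$ forces $n \le p/2$ in each fixed internal degree $p$, so the product equals the direct sum slice-wise, the filtration is both exhaustive and complete, and the comparison closes by a standard spectral sequence argument. In the $\Z/2$-graded case---where $A$ is forced to live in degree $0$ and the internal-degree filtration degenerates---this approach fails and a different argument is needed. For that case I would fall back on a curved Koszul resolution of $\cA$ as a bimodule over $\cA\otimes\cA^{\op}$, which for essentially smooth $A$ produces an explicit quasi-isomorphic model of $\oHoch^{II}(\cA)$ whose underlying complex is precisely $(\Omega^{\bullet}_{A/k}, dh)$; this Koszul argument, drawn from Efimov \cite{efimov} and Polishchuk-Positselski \cite{PP}, is the main technical ingredient I would defer to.
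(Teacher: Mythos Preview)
The paper does not prove this theorem; it simply states the result and cites \cite[Proposition 3.14]{efimov} for the proof. So there is no in-paper argument to compare against, and your eventual fallback to Efimov's Koszul-resolution argument is exactly what the paper itself does.

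Your chain-map verification is fine: the identities $\epsilon\circ b_2=0$, $\epsilon\circ b_0=dh\cdot\epsilon$, and $\epsilon\circ B=d\circ\epsilon$ all go through as you describe, the sign bookkeeping being clean precisely because $A$ sits in even degrees.

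The genuine gap is in your ``elementary'' $\Z$-graded argument. You claim that in the reduced complex one has $|\bar a_i|\ge 2$, hence $n\le p/2$ in internal degree $p$, collapsing the product to a direct sum. This uses $A^0=k$, which is not part of the hypotheses: an essentially smooth curved $k$-algebra is only required to be concentrated in \emph{even} degrees, and the paper's own $\Z$-graded example $A=Q[T_1,\dots,T_c]$ with $Q$ essentially smooth in degree~$0$ has $A^0=Q\ne k$, so $\overline{A}$ contains many degree-$0$ elements and your bound fails. One can still filter by internal degree in the spirit of the paper's proof of Proposition~\ref{normalized-Hoch}, but the associated graded is then the \emph{second-kind} Hochschild complex of $A$ with $h=0$, and identifying that with $\Omega^\bullet_{A/k}$ is not literally the classical (direct-sum) HKR theorem. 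Making that identification precise already requires the bimodule/Koszul-resolution input from \cite{efimov} or \cite{PP}. In other words, the filtration does not buy you a genuinely elementary case; both the $\Z$- and $\Z/2$-graded cases ultimately rest on the same cited argument.
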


\begin{rem}
Efimov's model for the Hochschild complex of the second kind associated to a curved algebra $\cA=(A, W)$, written as $\Hoch^{\prod}(A, W)$ in 
\cite[Proposition 3.14]{efimov}, is identical to our $\tHoch^{II}(\cA)$, except the differential is negated. Further, the Connes $B$ operators on both complexes
are the same. This is why the differential in the target of the HKR map in loc. cit. is the negative of ours, but the statements are otherwise the same.
\end{rem}

\section{Chern character maps for curved differential graded categories}
\label{chernsection}

The {\em Grothendieck group of a triangulated category} $T$ is the abelian group generated by isomorphism classes $[X]$ of objects of $T$ modulo the
relations $[X] = [X'] + [X'']$ whenever there is a  distinguished triangle $X' \to X \to X'' \to \S X$. We write this group as $K_0^\Delta(T)$. 
If $\cC$ is a dg category, the associated {\em homotopy category}, written $[\cC]$, is the ordinary category having the same objects as $\cC$ and with hom sets
given by $\Hom_{[\cC]}(X,Y) := H^0 \Hom_{\cC}(X, Y)$. When $\cC$ is a pretriangulated dg category, $[\cC]$ inherits a canonical triangulated structure; this occurs, for instance, when
$\cC = \Perf(\cA)$ for some cdg category $\cA$. See \cite[Section 4.5]{kelleron} for a discussion of pretriangulated dg categories.

\begin{defn} 
Let $\cA$ be a   curved differential $\G$-graded category. The \emph{Grothendieck group} of $\cA$, written $K_0(\cA)$, is 
the Grothendieck group of the (triangulated) homotopy category $[\Perf(\cA^\op)]$ of the dg category of perfect right $\cA$-modules:
$$
K_0(\cA) := K_0^\Delta([\Perf(\cA^\op)]).
$$
So, $K_0(\cA)$ is generated by isomorphism
classes of perfect right $\cA$-modules modulo relations coming from homotopy equivalences and distinguished triangles.
\end{defn}

In this section, we define and develop the (ordinary) Chern character map and the Chern character map of the second kind for   cdg categories. 
The former is a homomorphism of the
form 
$$
ch_{HN}: K_0(\cA) \to \HN_0(\cA),
$$
where $\cA$ is any   dg category (i.e., a cdg category with trivial curvatures). The latter is a homomorphism of the form
$$
ch^{II}_{HN}: K_0(\cA) \to \HN^{II}_0(\cA),
$$
and it is defined for an arbitrary   cdg category.
For a   dg category $\cA$,
the two are related by the equation $ch^{II}_{HN} = \can \circ ch_{HN}$, where $\can$ is the canonical map from $\HN_0$ to
$\HN_0^{II}$. 

\begin{rem} If $\cA$ has non-trivial curvature, then the map $ch_{HN}$ is undefined, and no such factorization exists. This is as expected:
if, for example, $\cA = (A, d, h)$ is a cdga with $h \ne 0$, then $\HN_0(\cA) = 0$, but
the map $ch^{II}_{HN}$ is often non-trivial. 
\end{rem}

\subsection{The Chern character map for dg categories}
\label{cherndgcat}

We first review the construction of the ordinary Chern character map 
$$
ch_{HN}: K_0(\cA) \to \HN_0(\cA)
$$
for a dg category $\cA$, as defined, for instance, in \cite{keller}.  Merely knowing that such a map exists and is natural is enough to
determine it, as we now explain. 

There are isomorphisms
$$
\Z \xra{\cong} K_0(k)
$$
and 
$$
k[[u]]_0 \xra{\cong} \HN_0(k)
$$
given by the ring map sending $1$ to $[k]$ and the $k[[u]]_0$-linear map sending $1$ to the class $\g$ represented by the constant power series $1 \in \oHoch(k)[[u]]$, respectively. Recall that if $\G = \Z$, then $k[[u]]_0 = k$, and if $\G = \Z/2$ then $k[[u]]_0 =
k[[u]]$, the ring of formal power series. Under these isomorphisms, the Chern character 
\begin{equation}
\label{chernfield}
ch_{HN}: K_0(k) \to \HN_0(k)
\end{equation}
is the unique ring map $\Z \to k[[u]]_0$. 

\begin{rem}
\label{canonicalclass}
The element $1$ is a cycle in the complex $(\oHoch^{II}(k)[[u]], b + uB)$, but not in $(\Hoch^{II}(k)[[u]], b + uB)$ (unless $\chr(k) = 2$). One easily checks that
$$\sum_{i = 0}^{\infty} c_i 1[\underbrace{1 | \cdots | 1}_{2i \text{ copies}}]u^i \in (\Hoch^{II}(k)[[u]], b + uB),$$
where $c_i := (-1)^i 2^i \prod_{j = 0}^{i-1} (2j + 1)$, is a cycle; the projection 
$$
(\Hoch^{II}(k)[[u]], b + uB) \onto (\oHoch^{II}(k)[[u]], b + uB)
$$
sends this cycle to $1$.
\end{rem}

Both $K_0( - )$ and $\HN_0( - )$ are \emph{Morita invariant}; that is, the Yoneda map
$$
Y_\cA: \cA \to \Perf(\cA^\op), X \mapsto \Hom_{\cA}(-, X).
$$
induces isomorphisms upon applying $K_0( - )$ and $\HN_0( - )$.
Let $X$ be an object of $\Perf(\cA^\op)$. There is a canonical map $\alpha_X: k \to \End(X) := \End_{\Perf(\cA^\op)}(X)$ sending $1$ to $\id_X$ and an inclusion
$\inc: \End_{\Perf(\cA^\op)} (X) \into \Perf(\cA^\op)$. 
The naturality of $ch$ yields the commutative diagram
\begin{equation} \label{E630b}
\xymatrix{
K_0(k) \ar[r]^-{(\alpha_X)_*} \ar[d]^-{ch_{HN}} & K_0(\End(X)) \ar[r]^{\inc_*} \ar[d]^-{ch_{HN}} & K_0(\Perf(\cA^\op)) \ar[d]^-{ch_{HN}} & K_0(\cA) \ar[l]^-{\cong}_-{(Y_\cA)_*} \ar[d]^-{ch_{HN}}\\
\HN_0(k) \ar[r]^-{(\alpha_X)_*}     & \HN_0(\End(X)) \ar[r]^{\inc_*}  & \HN_0(\Perf(\cA^\op)  & \HN_0(\cA) \ar[l]^-{\cong}_-{(Y_\cA)_*}.\\
}
\end{equation}
From this diagram we deduce:

\begin{prop} If $\cA$ is a  differential $\G$-graded category, and $X \in \Perf(\cA^\op)$ is a perfect right $\cA$-module, 
then $ch_{HN}(X)$ is the image of $\g \in  \HN_0(k)$ under $(Y_\cA)_*^{-1} \circ \inc_* \circ (\alpha_X)_*$.
\end{prop}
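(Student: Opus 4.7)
The plan is to deduce the proposition by chasing the unit $1 \in K_0(k) \cong \mathbb{Z}$ through the commutative diagram \eqref{E630b} (which exists by the naturality of $ch_{HN}$). The two ingredients I would check are: (i) the top row, after applying $(Y_\cA)_*^{-1}$, sends $1$ to $[X] \in K_0(\cA)$; and (ii) the leftmost vertical arrow $ch_{HN}$ sends $1$ to $\gamma \in \HN_0(k)$. Given these, the claim follows by reading the outer rectangle of \eqref{E630b}.

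For (i), the generator $1 \in K_0(k)$ is the class $[k]$ of the free rank-one right $k$-module. The dg functor $\alpha_X: k \to \End(X)$, which sends $1$ to $\id_X$, induces $(\alpha_X)_*: K_0(k) \to K_0(\End(X))$ carrying $[k]$ to the class $[\End(X)]$ of the rank-one free right $\End(X)$-module (the image of $\id_*$ under the Yoneda embedding of $\End(X)$ into $\Perf(\End(X)^{\mathrm{op}})$). Next, under the inclusion $\inc : \End(X) \hookrightarrow \Perf(\cA^{\mathrm{op}})$, the unique object of $\End(X)$ is sent to $X$; tracing through the definition $K_0(\mathcal{B}) := K_0^{\Delta}([\Perf(\mathcal{B}^{\mathrm{op}})])$, one sees that $\inc_*$ sends $[\End(X)]$ to $[X] \in K_0(\Perf(\cA^{\mathrm{op}}))$. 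Finally, since $(Y_\cA)_*$ is an isomorphism by Morita invariance of $K_0$ (which, like the Hochschild analogue in Proposition~\ref{thmPP}, is a consequence of Proposition~\ref{PPpseudo}), the class $[X] \in K_0(\Perf(\cA^{\mathrm{op}}))$ pulls back uniquely to $[X] \in K_0(\cA)$. For (ii), this is exactly the content of \eqref{chernfield}: the composite $K_0(k) \cong \mathbb{Z} \xrightarrow{ch_{HN}} \HN_0(k) \cong k[[u]]_0$ is the unique ring map, sending $1$ to the class $\gamma$.

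Assembling these observations with commutativity of \eqref{E630b} gives
$$
ch_{HN}([X]) \;=\; ch_{HN}\bigl((Y_\cA)_*^{-1}\,\inc_*\,(\alpha_X)_*(1)\bigr) \;=\; (Y_\cA)_*^{-1}\,\inc_*\,(\alpha_X)_*\bigl(ch_{HN}(1)\bigr) \;=\; (Y_\cA)_*^{-1}\,\inc_*\,(\alpha_X)_*(\gamma),
$$
as required. The only real obstacle is the bookkeeping in step (i): one must carefully unravel the $K_0^{\Delta}([\Perf(-^{\mathrm{op}})])$ definition and the behavior of pushforward along a dg functor at the level of module categories to confirm that $(\alpha_X)_*([k]) = [\End(X)]$ and $\inc_*([\End(X)]) = [X]$. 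Beyond this, the proof is a formal consequence of naturality and the computation of $ch_{HN}$ for the base field.
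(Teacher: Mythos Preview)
Your proposal is correct and follows exactly the approach indicated in the paper: the proposition is stated there as an immediate consequence of the commutative diagram \eqref{E630b}, and your argument simply makes the diagram chase explicit by tracking $1 \in K_0(k)$ across the top row and $\gamma$ across the bottom. One small quibble: the Morita invariance of $K_0$ for dg categories that you invoke is not literally a consequence of Proposition~\ref{PPpseudo} (which concerns pseudo-equivalences of cdg categories and is used for $\HN^{II}$); in the paper it is simply asserted as a known fact about dg categories, so you should cite it as such rather than routing through~\ref{PPpseudo}.
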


\subsection{The Chern character map of the second kind}

We now wish to define a Chern character map  of the second kind
$$
ch^{II}_{HN}: K_0(\cA) \to \HN^{II}_0(\cA)
$$
for a   cdg category $\cA$. We cannot proceed exactly as in the previous section, since there is no Yoneda embedding $\cA \into \Perf(\cA^\op)$
(Example~\ref{representable}). Instead, we use the quasi-Yoneda embedding  
$$
qY_{\cA}: \cA \into \qPerf(\cA^\op), X \mapsto \Hom_{\cA}(-, X),
$$
and the quasi-Morita invariance of mixed Hochschild complexes of the second kind (Proposition~\ref{thmPP}).

Recall that $K_0(\cA)$ is defined to be $K_0^\Delta([\Perf(\cA^\op)])$. The Yoneda embedding
$$
Y = Y_{\Perf(\cA^\op)} : \Perf(\cA^\op) \into \Perf(\Perf(\cA^\op)^\op)
$$
induces a triangulated functor on homotopy categories
$$
[Y]: [\Perf(\cA^\op)] \xra{} [\Perf(\Perf(\cA^\op)^\op)];
$$
the target is the idempotent completion of the source \cite[Section 4.6]{kelleron}.  Note that $[\Perf(\cA^\op)]$ need not be idempotent complete; for instance, take $\cA$ to be
the $\Z/2$-graded curved algebra $(\C[x,y]_{(x,y)}, -y^2 + x^2(x+1))$, where $\C[x,y]_{(x,y)}$ is concentrated in degree $0$. Then $\Perf(\cA^{\op})$ coincides with the matrix
factorization category $\MF(\C[x,y]_{(x,y)}, y^2 - x^2(x+1))$, and, as shown in \cite[Section 2.5]{thesis}, the homotopy category of this matrix factorization category is
not idempotent complete.  

The triangulated functor $[Y]$ induces a canonical map
$$
K_0^\Delta([Y]): K_0(\cA) \to K_0(\Perf(\cA^{op})).
$$

\begin{defn} For a   cdg category $\cA$, the {\em Chern character map of the second kind} is the homomorphism
$$
ch_{HN}^{II}: K_0(\cA) \to \HN^{II}_0(\cA)
$$
given by the composition 
$$
\begin{aligned}
K_0(\cA) & \xra{K_0^\Delta([Y])} K_0(\Perf(\cA^\op))  \xra{ch_{HN}}  \HN_0(\Perf(\cA^\op)) \\
& 
\xra{\can}  \HN^{II}_0(\Perf(\cA^\op)) \to \HN^{II}_0(\qPerf(\cA^\op)) \xra{(qY_{\cA})_*^{-1}}  \HN^{II}_0(\cA),
\end{aligned}
$$
where the penultimate map is induced by the inclusion $\Perf(\cA^\op) \subseteq \qPerf(\cA^\op)$, and the last map is the inverse of the isomorphism induced by
the quasi-Yoneda embedding $qY_{\cA}$ (see Proposition~\ref{thmPP}).
\end{defn}

\begin{rem} If $\cA$ is a dg category, then, since the quasi-Yoneda embedding lands in $\Perf(\cA)$, a diagram chase shows that
$$
ch_{HN}^{II} = \can \circ ch_{HN},
$$
where $\can: \HN_0(\cA) \to \HN^{II}_0(\cA)$ is the canonical map. 
\end{rem}

\subsection{The Chern character map of the second kind for cdg modules over a cdga}
\label{cherncdgmodules}

We now specialize our general construction to the case of a   cdga
$\cA = (A,d,h)$. 
In the previous section, we established a Chern character map of the second kind
$$
ch_{HN}^{II}: K_0(\cA) \to \HN^{II}_0(\cA).
$$
In this section, we extend this construction slightly.

Given a  perfect right quasi-module $\cP = (P, \d_P) \in \qPerf(\cA^\op)$, recall that $\End(\cP) := \End_{\qPerf(\cA^\op)}(\cP)$ 
is the cdga $(\End_A(P), [\d_P,-], \d_P^2 + \rho_h)$ where
$\rho_h$ is right multiplication by $h$; see Remark \ref{rem72c}.

\begin{defn} \label{def41}
For a perfect right quasi-module $\cP = (P, \d_P) \in \qPerf(\cA^\op)$,
define the map 
$$
ch^{II}_{\cP}: \HN_0^{II}(\End(\cP)) \to  \HN^{II}_0(\cA)
$$
to be the composition of the maps induced by the inclusion of $\End(\cP)$
into $\qPerf(\cA^\op)$ and the inverse of the quasi-Yoneda isomorphism: 
$$
\HN_0^{II}(\End(\cP)) \to  \HN^{II}_0(\qPerf(\cA^\op)) \xra{\cong} \HN^{II}_0(\cA).
$$
\end{defn}

If $\cP \in \Perf(\cA^\op)$, so that $\End(\cP)$ has trivial curvature, then there is a distinguished class
$$
\g_P \in \HN_0^{II}(\End(\cP))
$$
represented by the constant power series $\id_P \in \oHoch^{II}(\End(\cP))[[u]]$. The reason we use the notation $\g_P$ rather than $\id_P$ for this distinguished class is that $\id_P$ is not a cycle in $(\Hoch^{II}(\End(\cP))[[u]], b + uB)$ (see Remark \ref{canonicalclass}).

The following is immediate from the definitions:

\begin{prop} For any cdga $\cA$ and any $\cP \in \Perf(\cA^\op)$, 
we have
$$
ch_{HN}^{II}([\cP]) = ch^{II}_{\cP}(\g_P).
$$
\end{prop}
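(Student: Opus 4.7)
The plan is to chase definitions and apply naturality. Unwinding the definition of $ch^{II}_{HN}([\cP])$, the first map $K_0^\Delta([Y])$ sends $[\cP]$ to $[Y(\cP)] \in K_0(\Perf(\cA^\op))$, where $Y$ is the Yoneda embedding of the dg category $\Perf(\cA^\op)$. Applying the proposition preceding Definition~\ref{def41} (with ``$\cA$'' there replaced by the dg category $\Perf(\cA^\op)$ and ``$X$'' by $Y(\cP)$), I obtain
\[
ch_{HN}([Y(\cP)]) = Y_*^{-1} \circ \inc_* \circ (\alpha_{Y(\cP)})_*(\gamma),
\]
where $\gamma \in \HN_0(k)$ is the canonical class.

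The key observation is that Yoneda restricts to a dg isomorphism $\End(\cP) \xrightarrow{\cong} \End(Y(\cP))$ intertwining $\alpha_\cP$ with $\alpha_{Y(\cP)}$ and intertwining the subcategory inclusion $\End(\cP) \hookrightarrow \Perf(\cA^\op)$ with $\End(Y(\cP)) \hookrightarrow \Perf(\Perf(\cA^\op)^\op)$ via $Y$. These compatibilities let the factors $Y_*$ and $Y_*^{-1}$ cancel, yielding $ch_{HN}([Y(\cP)]) = \inc_*(\gamma^{\HN}_\cP)$ in $\HN_0(\Perf(\cA^\op))$, where $\gamma^{\HN}_\cP := (\alpha_\cP)_*(\gamma) \in \HN_0(\End(\cP))$ is the class represented by the constant power series $\id_\cP$.

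Next, naturality of $\can : \HN_0 \to \HN_0^{II}$, combined with the observation that $\can(\gamma^{\HN}_\cP) = \gamma_\cP$ (both are represented by the constant power series $\id_\cP$, and $\can$ is induced by the inclusion $\oHoch \hookrightarrow \oHoch^{II}$), gives $(\can \circ ch_{HN})([Y(\cP)]) = \inc_*(\gamma_\cP)$ in $\HN_0^{II}(\Perf(\cA^\op))$. The remaining maps in the composition defining $ch^{II}_{HN}$ push this through $\HN_0^{II}(\qPerf(\cA^\op))$ and then apply $(qY_\cA)_*^{-1}$, producing exactly $ch^{II}_\cP(\gamma_\cP)$ by Definition~\ref{def41}. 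The argument is a pure diagram chase with no substantial obstacle; the only mildly delicate point is verifying that the Yoneda-induced endomorphism isomorphism is compatible with both the canonical maps from $k$ and the relevant subcategory inclusions, and this is immediate.
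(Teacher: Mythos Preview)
Your proof is correct and is essentially a careful unpacking of what the paper means by ``immediate from the definitions.'' The paper gives no argument beyond that phrase, and your diagram chase via the Yoneda isomorphism $\End(\cP)\cong\End(Y(\cP))$ and naturality of $\can$ is exactly the verification one would supply if asked.
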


For $\cP \in \qPerf(\cA^\op)$, recall from Example~\ref{perfpseudo} the notation $\cP^\nat:=(P, 0) \in \qPerf(\cA^\op)$. We have an isomorphism of cdga's
$$
(\id, \d_P): \End(\cP) \to \End(\cP^\nat)
$$
with inverse given by
$$
(\id, -\d_P): \End(\cP^\nat) \to \End(\cP).
$$

The following proposition, which relates the Chern character maps for $\cP$ and $\cP^\nat$, will play a crucial role later on:

\begin{prop}
\label{comparison}
For a cdga $\cA$ and $\cP \in \qPerf(\cA^\op)$, we have 
$$
ch^{II}_{\cP^\nat} \circ (\id, \d_P)_* = ch^{II}_{\cP}.
$$
In particular, if $\cP \in \Perf(\cA^\op)$, we have
$$
ch_{HN}^{II}(\cP) = ch^{II}_{\cP^\nat} \left(\sum_j   (-1)^j [ \underbrace{\ov{\d_P}| \cdots | \ov{\d_P}}_{j \text{ copies}}]\right) \in H_0(\oHoch^{II}(\cA)[[u]], b + u B) 
=  \HN^{II}_0(\cA).
$$ 
\end{prop}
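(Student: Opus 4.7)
The plan is to reduce to an equality of chain maps on the cdg subcategory $\cB := \{\cP, \cP^\nat\} \subseteq \qPerf(\cA^\op)$ and then exploit the pseudo-equivalences of Example~\ref{perfpseudo}.

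First I would observe that both $ch^{II}_\cP$ and $ch^{II}_{\cP^\nat} \circ (\id, \d_P)_*$ factor, by construction, through $\HN^{II}_0(\cB)$: each is obtained by first applying the strict inclusion $i_\cP : \End(\cP) \hookrightarrow \cB$ or $i_{\cP^\nat} : \End(\cP^\nat) \hookrightarrow \cB$, then the inclusion $\cB \hookrightarrow \qPerf(\cA^\op)$, and finally the inverse of the quasi-Yoneda quasi-isomorphism provided by Proposition~\ref{thmPP}. Hence it suffices to prove that $(i_{\cP^\nat})_* \circ (\id, \d_P)_* = (i_\cP)_*$ as maps $\HN^{II}_0(\End(\cP)) \to \HN^{II}_0(\cB)$. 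By Example~\ref{perfpseudo} and Theorem~\ref{pseudoqi}, both $(i_\cP)_*$ and $(i_{\cP^\nat})_*$ are isomorphisms on $\HN^{II}_0$, and $(\id, \d_P)_*$ is an isomorphism because the cdg functor $(\id, \d_P)$ is invertible with inverse $(\id, -\d_P)$.

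The main step is to verify that these two isomorphisms agree on homology. The discrepancy is already visible at the chain level: given $f_0[\ov{f_1} | \cdots | \ov{f_n}] \in \oHoch^{II}(\End(\cP))$, the map $(i_\cP)_*$ sends it to the same expression with $f_i$ interpreted as endomorphisms of $\cP$ in $\cB$, whereas $(i_{\cP^\nat})_* \circ (\id, \d_P)_*$ inserts, via formula~\eqref{E71b}, arbitrarily many copies of $\ov{\d_P}$ between the $\ov{f_i}$'s, with $f_i$ and $\d_P$ now interpreted as endomorphisms of $\cP^\nat$ in $\cB$. The natural strategy is to use the non-closed degree $0$ morphisms $\iota = \id_P \colon \cP \to \cP^\nat$ and $\pi = \id_P \colon \cP^\nat \to \cP$ in $\cB$, which satisfy $\pi \circ \iota = \id_\cP$, $\iota \circ \pi = \id_{\cP^\nat}$, and $\pi \circ \d(\iota) = -\d_P$, exhibiting $\cP^\nat$ as the twist of $\cP$ by $\d_P$. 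One would then construct an explicit chain homotopy $h \colon \oHoch^{II}(\End(\cP)) \to \oHoch^{II}(\cB)$ by inserting $\ov\iota$ and $\ov\pi$ in various positions in Hochschild chains, verify that $bh + hb$ equals the difference of the two maps, and verify that $h$ is compatible with the Connes $B$ operator so that the argument descends to $\HN^{II}$. Managing the signs and the infinite sums arising from~\eqref{E71b} is the main technical obstacle.

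Granting the first equation, the ``in particular'' statement follows by specializing to the class $\gamma_P \in \HN^{II}_0(\End(\cP))$, represented by the constant power series $\id_P \in \oHoch^{II}(\End(\cP))[[u]]$. Applying~\eqref{E71b} to $\gamma_P$ with $\rho = \id$ and $\beta = \d_P$ directly yields
$$
(\id, \d_P)_*(\gamma_P) = \sum_{j \geq 0} (-1)^j \, [\underbrace{\ov{\d_P} | \cdots | \ov{\d_P}}_{j \text{ copies}}],
$$
where the suppressed leading entry is $\id_P \in \End(\cP^\nat)$. Combining this with the identity $ch^{II}_{HN}(\cP) = ch^{II}_\cP(\gamma_P)$ established in the preceding proposition gives the desired formula.
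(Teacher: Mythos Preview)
Your reduction to the two-object subcategory $\cB = \{\cP, \cP^\nat\}$ is exactly the right first move, and the ``in particular'' part is handled correctly.  But the heart of the argument---showing $(i_{\cP^\nat})_* \circ (\id, \d_P)_* = (i_\cP)_*$---is not actually proved in your proposal: you describe a plan to build an explicit chain homotopy by inserting $\ov\iota$ and $\ov\pi$ into Hochschild chains, and then concede that ``managing the signs and the infinite sums\dots is the main technical obstacle.''  That obstacle is real, and you have not overcome it; as written this is a genuine gap.

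The paper sidesteps the homotopy construction entirely by a functoriality trick.  Rather than comparing chain maps by hand, it produces a (non-strict) cdg functor $G \colon \{\cP, \cP^\nat\} \to \{\cP^\nat\}$ which is the identity on morphisms, has $a_{\cP} = \d_P$ and $a_{\cP^\nat} = 0$, and satisfies $G \circ i_{\cP^\nat} = \id$ and $G \circ i_\cP = (\id, \d_P)$ as cdg functors.  Since $\oMC^{II}$ is functorial for arbitrary cdg functors (Proposition~\ref{functoriality}), applying $\HN^{II}_0$ gives $G_* \circ (i_{\cP^\nat})_* = \id$ and $G_* \circ (i_\cP)_* = (\id, \d_P)_*$.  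Because $(i_{\cP^\nat})_*$ is an isomorphism (pseudo-equivalence plus Theorem~\ref{pseudoqi}), the first equation forces $G_* = (i_{\cP^\nat})_*^{-1}$, and substituting into the second yields $(i_\cP)_* = (i_{\cP^\nat})_* \circ (\id, \d_P)_*$.  No chain-level homotopy, no sign bookkeeping, no infinite sums---the entire comparison is packaged into the existence of $G$ and the already-established functoriality of $\oMC^{II}$ for non-strict cdg functors.
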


\begin{proof} 
Let $\{\cP, \cP^\nat\}$ denote the full subcategory of $\qPerf(\cA^\op)$ consisting of just the two
indicated objects, and let
$$
\iota: \{\cP^\nat\}\to \{\cP, \cP^\nat\} \and
\iota': \{\cP\} \to \{\cP, \cP^\nat\}
$$
denote the inclusion functors. The functor $\iota$ admits a left inverse; that is, there is a cdg functor 
$$
G: \{\cP, \cP^\nat\} \to \{\cP^\nat\}
$$
such that $G \circ \iota = \id$.
The functor 
$G$ is defined in the only way possible on objects; on morphisms, the map
$\End(\cP)\to \End(\cP^\nat)$ is $(\id, \d_P)$, the map
$\End(\cP^\nat) \to \End(\cP^\nat)$ is $(\id, 0)$, and both maps $\Hom(\cP, \cP^\nat)  \to \End(\cP^\nat)$ and 
$\Hom(\cP^\nat, \cP)  \to \End(\cP^\nat)$ are the identity. Observe that $G \circ \iota' = (\id, \d_P)$.

Consider the diagram of graded $k[[u]]$-modules
$$
\xymatrix{
\HN_*^{II}(\{\cP\}) \ar[rr]^-{(\id, \d_P)_*} \ar[rd]^-{\iota'_*} \ar[rdd]_{} &    & \HN_*^{II}(\{\cP^\nat\}) \ar[ddl]^{} \ar[dl]_-{\iota_*} \\
& \HN_*^{II}(\{\cP, \cP^\nat\})  \ar[d]^{} \\
& \HN_*^{II}(\qPerf(\cA^\op)) \\
}
$$
in which each map, except for the horizontal one,
is induced by the evident inclusion of cdg categories. 

To prove $ch_{\cP^\nat} \circ (\id, \d_P)_* = ch_{\cP}$, it suffices to show that the exterior triangle commutes. Clearly the bottom two interior triangles commute, so we need only show
$\iota'_* =  \iota_* \circ (\id, \d_P)_* $. By Example~\ref{perfpseudo} and Proposition~\ref{functoriality}, $\iota_*$
and $\iota'_*$ are isomorphisms. Thus, we have $G_* = (\id, \d_P)_* \circ (\iota'_*)^{-1}$, and composing both sides
with $\iota_*$ on the left yields the result. 

Finally, by \eqref{E71b}, we have
$$
(\id, \d_P)_*(\g_P) = \sum_j (-1)^j[ \underbrace{\ov{\d_P}| \cdots | \ov{\d_P}}_{j \text{ copies}}]).  \qedhere
$$
\end{proof}

\section{The main theorem}
\label{themaintheorem}

Assume $\on{char}(k) = 0$, and let $\cA = (A,h)$ be an essentially smooth curved $k$-algebra (Definition~\ref{essentiallysmooth}). Upon composing the Chern character map
$$
ch_{HN}^{II}: K_0(\cA) \to \HN_0^{II}(\cA)
$$
with the HKR isomorphism
$$
\e: \HN_0^{II}(\cA) \xra{\cong} H_0(\O^\bullet_{A/k}[[u]], ud + dh),
$$
we get a map 
$$
ch^{II}_{HKR}: K_0(\cA) \to H_0(\O^\bullet_{A/k}[[u]], ud + dh).
$$
The main theorem of this paper provides a Chern-Weil-type formula for the class $ch^{II}_{HKR}(\cP)$ associated to a perfect right module $(P, \d_P) \in
\Perf(\cA^\op)$.

\subsection{Connections, curvature, and the trace map}
Before giving the precise statement of the main theorem, we establish some terminology.

\begin{defn} If $\cP = (P, \d_P) \in \qPerf(\cA)$, a \emph{connection} on $\cP$ is a degree $-1$ 
morphism 
$$
\n: P \to P  \otimes_A  \Omega^1_{A/k}
$$
of graded $k$-vector spaces such that $\n(pa) =   \n(p)a + (-1)^{|p|}p \otimes d(a)$ for all $a \in A$ and $p \in P$, where $d$ denotes the de Rham differential on
$\Omega^\bullet_{A/k}$. That is, a connection on $\cP$ is just a connection on $P$ which respects the grading.
\end{defn}

\begin{rem}
Our grading convention for $\Omega^\bullet_{A/k}$ is the same as the one used in Section~\ref{HKRsection}, so $a_0da_1 \wedge \cdots \wedge da_j$ has degree $\sum |a_j| - j$.
\end{rem}

A connection $\nabla$ on $(P, \d_P) \in \qPerf(\cA)$ extends uniquely to a degree $-1$ map
$$
\tn :  P \otimes_A \Omega^\bullet_{A/k}   \to  P \otimes_A \Omega^\bullet_{A/k} 
$$
of graded $k$-vector spaces such that $\tn( p \otimes \o) =  \tn(p) \o + (-1)^{|p|}p \otimes d \o$.
Let 
$$
\n^2: P \to  P \otimes_A \Omega^2_{A/k}
$$ 
denote the restriction of $(\tn)^2$ to $P$. Both $\n^2$ and the map
$$
[\n, \d_P]: P \to  P \otimes_A \Omega^1_{A/k}
$$
are $A$-linear maps, of degrees $-2$ and $0$, respectively.  
Since $A$ is commutative and $P$ is projective, there is a canonical isomorphism
$$
\Hom_{A}(P,  P \otimes_A \Omega^\bullet_{A/k}) \xra{\cong}  \End_{A}(P) \otimes_A \Omega^\bullet_{A/k},
$$
and thus we may identify $\n^2$ and $[\n, \d_P]$ with elements of $\End_{A}(P) \otimes_A \Omega^\bullet_{A/k}$.  

\begin{defn} The {\em curvature} of a connection $\n$ on $(P, \d_P) \in \qPerf(\cA^{op})$ is the class
$$
R := \n^2 u + [\n ,\d_P]  \in  \End_A(P)\otimes_A \Omega^\bullet_{A/k}[[u]].
$$
\end{defn}

\begin{rem}
Recall that, classically, the curvature of a connection $\n$ on an ordinary algebraic vector bundle is given by just $\n^2$. As discussed in the introduction, the presence of the extra term $[\n, \d_P]$ is motivated by Quillen's formula for the Chern character of a relative topological $K$-theory class in \cite{quillen}.
\end{rem}

For a projective $A$-module $P$, we have a canonical isomorphism
$$
P \otimes_A P^* \xra{\cong} \End_A(P)
$$
given by $p \otimes \g \mapsto (x \mapsto p\g(x))$. We define the trace map
$$
\tr: \End_A(P) \to A
$$
to be the map corresponding, via this isomorphism, to
$$\on{ev}: P \otimes_A P^* \to A,$$
where $\on{ev}(p \otimes \gamma) = (-1)^{|p||\g|} \gamma(p).$
We extend $\tr$ to a map
$$
\tr:  \End_{A}(P) \otimes_A  \Omega^\bullet_{A/k}[[u]] \to \Omega^\bullet_{A/k}[[u]]
$$ 
by extension of scalars: $\tr(\a \otimes \o) = \tr(\a) \o$. 

\begin{rem}
\label{tr}
If $P$ is a graded free $A$-module with basis $e_1, \dots, e_r$, a homogeneous element of $\End_A(P)$ of degree $m$ may be identified with an $r \times r$
matrix $(a_{i,j})$ of homogeneous elements with $|a_{i,j}| = |e_i| - |e_j| + m$. In this case, $\tr$ is the $\Omega^\bullet_{A/k}[[u]]$-linear map which sends
$(a_{i,j})  \otimes 1 $ to $\sum_i (-1)^{|e_i|} a_{i,i} \in \O^{\bullet}_{A/k}[[u]]$. 
When $P$ is not necessarily free, $\tr$ may be described locally in the above manner. Using this observation, we conclude that
\begin{enumerate}
\item $\tr( \a \otimes \o) = 0$ if $\a \in \End_A(P)$ has odd degree (since $A$ has no non-zero odd degree elements), 
and
\item $\tr \circ [ - , -] = 0$, where $[- , -]$ denotes the graded commutator in the algebra $\End_A(P) \otimes_A  \Omega^\bullet_{A/k}[[u]] $.
\end{enumerate}
\end{rem}

The following lemma is adapted directly from classical Chern-Weil theory; see, for instance, \cite[Lemma 8.1.5]{Loday}.

\begin{lem} \label{lem326b} 
Let $\cA = (A, h)$ be an essentially smooth curved $k$-algebra, 
let $\cP = (P, 0)$ be an object in $\qPerf(\cA)$, and let $\n$ be a connection on $\cP$. We have
$$
\tr \circ [\n, -] = d \circ \tr,
$$
where $\tr: \End_A(P) \otimes_A \O^\bullet_{A/k} \to \O^\bullet_{A/k}$ is the trace map, and $d$ is the de Rham differential.
\end{lem}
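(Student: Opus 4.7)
The plan is to reduce to the case where $P$ is free and then verify the identity by a direct matrix computation, after first showing that the identity is independent of the choice of connection.

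First I would show that the desired equation does not depend on $\n$. Given two connections $\n_1$ and $\n_2$ on $\cP$, the Leibniz rule forces the difference $\alpha := \n_1 - \n_2 : P \to P \otimes_A \Omega^1_{A/k}$ to be $A$-linear, so $\alpha$ corresponds to an element of $\End_A(P) \otimes_A \Omega^1_{A/k}$. Extending to $\End_A(P) \otimes_A \Omega^\bullet_{A/k}$ by the graded Leibniz rule, one checks that $[\n_1, \phi] - [\n_2, \phi] = [\alpha, \phi]$, where the right-hand side is the graded commutator in the algebra $\End_A(P) \otimes_A \Omega^\bullet_{A/k}$. By Remark~\ref{tr}(2), $\tr$ annihilates such commutators, so $\tr([\n_1, \phi]) = \tr([\n_2, \phi])$. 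Thus it suffices to verify the identity for any one choice of connection on $P$.

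Next I would reduce to the free case. Since $P$ is finitely generated projective over the essentially smooth algebra $A$, it is Zariski-locally free. Both $\tr \circ [\n, -]$ and $d \circ \tr$ commute with localization of $A$ (for the latter, because $d$ and the formation of K\"ahler differentials localize; for the former, note that a connection restricts to a connection on any localization and the trace is defined in a module-theoretically local way). Hence checking the identity over an affine open cover reduces us to the case that $P$ is free with a homogeneous $A$-basis $e_1,\dots,e_r$.

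Finally, I would use the trivial connection $\n_0$ defined by $\n_0(e_i) = 0$ and extended via Leibniz. For $\phi \in \End_A(P) \otimes_A \Omega^\bullet_{A/k}$, represented as a matrix $(\omega_{ij})$ in the basis as in Remark~\ref{tr}, an entrywise computation gives that the matrix representing $[\n_0,\phi]$ has entries of the form $\pm d(\omega_{ij})$, with a sign depending on $|e_i|$ (and the parity of the $\Omega^\bullet$-degree of $\omega_{ij}$, but these parities cancel on the diagonal). Taking the trace and comparing with the formula $\tr(\phi) = \sum_i (-1)^{|e_i|} \omega_{ii}$ from Remark~\ref{tr} yields
\[
\tr([\n_0,\phi]) \;=\; \sum_i (-1)^{|e_i|} d(\omega_{ii}) \;=\; d(\tr(\phi)),
\]
completing the proof.

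The main obstacle is bookkeeping of signs in the cohomological/graded convention of the paper, where $\n$ has degree $-1$ and the de Rham generators $da$ have degree $|a|-1$. Once one trusts the independence step, the free-module verification is essentially the classical Chern--Weil calculation (as in \cite[Lemma 8.1.5]{Loday}) transported to this $\G$-graded setting.
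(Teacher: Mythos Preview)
Your proposal is correct and follows essentially the same route as the paper: localize so that $P$ is free, and then reduce to the trivial connection using that the trace annihilates commutators. The only organizational difference is that you isolate the independence-of-connection step first and then switch to $\n_0$, whereas the paper localizes first, writes the given connection as $\n(v)=dv+\theta v$ for a matrix of one-forms $\theta$, and observes $[\n,X]=dX+[\theta,X]$ with $\tr([\theta,X])=0$; this is exactly your independence argument carried out in-line.
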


In fact, the curvature plays no role here, so this lemma is almost identical to the classical version. But, since there is a $\G$-grading to keep track of, our
statement is slightly more general, 
and so we provide a proof.

\begin{proof} Localizing at a homogeneous prime ideal of $A$, we may assume $P$ is graded free \cite[Proposition 1.15(d)]{BH}. 
Let $n$ denote the rank of $P$. Identifying $\End_A(P) \otimes_A \O^\bullet_{A/k}$ with $\Mat_{n \times n}(\O^\bullet_{A/k})$, the connection $\n$ is given by 
$$
\n(v) = dv + \theta v
$$
for some matrix of one-forms $\theta \in \Mat_{n \times n} (\O^1_{A/k})$. Let $X \in \Mat_{n \times n}(\O^\bullet_{A/k})$. Noting that, for any column vector $v \in
({\O^m_{A/k}})^{\oplus n}$, one has $d(Xv) = (dX)(v) + (-1)^m X(dv)$, it follows that 
$$
[\n, X] = dX + [\theta, X].
$$
Thus, since $\tr([\theta, X]) = 0$ (Remark~\ref{tr}), we get $\tr([\n, X]) = \tr(dX) = d \tr(X).$
\end{proof}

\subsection{Statement of the main theorem}
\label{statement}
The following theorem is the main result of this paper:

\begin{thm} \label{mainthm} Let $k$ be a field of characteristic $0$. 
Assume $\cA = (A,h)$ is a $\G$-graded, essentially smooth curved $k$-algebra, and let $\cP = (P, \d_P) \in \Perf(\cA^\op)$ be a
  perfect right $\cA$-module. 
For any connection $\n$ on $\cP$, we have
$$
ch^{II}_{HKR}(\cP) = \tr(\exp(-R)) \in H_0(\Omega^\bullet_{A/k}[[u]], u d + dh),
$$
where $R = \n^2 u + [\n, \d_P]$, and 
$$
\exp(-R) = \id - R + \frac{R^2}{2!} -  \frac{R^3}{3!} + \cdots \in \End_{A}(P) \otimes_A \Omega^\bullet_{A/k}[[u]].
$$
\end{thm}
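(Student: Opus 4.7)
The plan is to reduce, via Proposition~\ref{comparison}, to computing the image of an explicit cocycle in $\HN_0^{II}(\End(\cP^\natural))$, and then to use the connection $\nabla$ to construct an explicit chain-level realization of the composite $\epsilon\circ ch^{II}_{\cP^\natural}$ that manifestly yields the trace formula. I expect the main technical point to be the construction of this chain map, which presumably is the content of Theorem~\ref{existence} in the appendix.

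\textbf{Step 1 (Reduction to $\cP^\natural$).} By Proposition~\ref{comparison},
\[
ch^{II}_{HN}(\cP) \;=\; ch^{II}_{\cP^\natural}\!\left(\sum_{j\ge 0}(-1)^j\bigl[\,\underbrace{\overline{\delta_P}\mid\cdots\mid\overline{\delta_P}}_{j}\,\bigr]\right),
\]
where $\cP^\natural=(P,0)$ and $\End(\cP^\natural)=(\End_A(P),0,\rho_h)$. So the task is to compute the image of this specific class under $\epsilon\circ(qY_\cA)_*^{-1}\circ\iota_*$, where $\iota\colon \End(\cP^\natural)\hookrightarrow \qPerf(\cA^{\op})$.

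\textbf{Step 2 (Construct a chain-level model via $\nabla$).} The main technical step is to produce an explicit morphism of mixed complexes
\[
\Psi_{\nabla}\colon \oMC^{II}(\End(\cP^\natural))\longrightarrow(\Omega^\bullet_{A/k},dh,d),
\]
built from $\nabla$, inducing on negative cyclic complexes a chain map
\[
\Psi_{\nabla}\colon \oHN^{II}(\End(\cP^\natural))\longrightarrow(\Omega^\bullet_{A/k}[[u]],\,ud+dh)
\]
whose induced map on $H_0$ agrees with $\epsilon\circ(qY_\cA)_*^{-1}\circ\iota_*$. The formula should take the shape
\[
\Psi_{\nabla}\bigl(\alpha_0[\overline{\alpha_1}\mid\cdots\mid\overline{\alpha_n}]\bigr)=\frac{1}{n!}\,\tr\bigl(\alpha_0\,[\nabla,\alpha_1]\cdots[\nabla,\alpha_n]\bigr),
\]
extended to incorporate $u$-terms coming from $\nabla^2$ so that the Connes $B$ operator on the source corresponds to $d$ on the target, using Lemma~\ref{lem326b} and the key identity $[[\nabla,\alpha],\beta]+(-1)^{|\alpha|}[\alpha,[\nabla,\beta]]=[\nabla^2,\alpha\beta]-\ldots$ (standard super-Leibniz manipulations) to verify compatibility with $b_1+b_2$; the $b_0$-term, which inserts the curvature $\rho_h=-\delta_P^2+\delta_P^2+\rho_h$, is handled using $\nabla^2\in\End_A(P)\otimes\Omega^2$ together with $d(h)$-adjustments on the target. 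Compatibility with the existing HKR isomorphism on $\cA$ is then confirmed by noting that both sides restrict on $A\subseteq\End_A(P)$ (via $\id_P$-scalars) to the usual HKR formula.

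\textbf{Step 3 (Apply $\Psi_\nabla$ to the distinguished class).} Evaluating $\Psi_\nabla$ on $\sum_j(-1)^j[\overline{\delta_P}\mid\cdots\mid\overline{\delta_P}]$ yields the finite (by Remark~\ref{finitesum}) sum
\[
\sum_{j\ge 0}\frac{(-1)^j}{j!}\tr\Bigl(\bigl(u\nabla^2+[\nabla,\delta_P]\bigr)^{\!j}\Bigr)\;=\;\tr\bigl(\exp(-R)\bigr),
\]
where the cross-terms in $([\nabla,\delta_P])^k$ and $u\nabla^2$-insertions combine, using $\tr\circ[-,-]=0$ and Lemma~\ref{lem326b}, to give exactly the powers of $R=u\nabla^2+[\nabla,\delta_P]$. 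This completes the identification $\epsilon\circ ch^{II}_{HN}(\cP)=\tr(\exp(-R))$ in $H_0(\Omega^\bullet_{A/k}[[u]],ud+dh)$.

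\textbf{Main obstacle.} Step 2 is the heart of the matter: to construct $\Psi_\nabla$ requires delicate sign bookkeeping for the Connes $B$ operator together with all the super-Leibniz terms involving $\delta_P^2=-\rho_h$ (which is the curvature of $\End(\cP^\natural)$). Moreover, one must show that $\Psi_\nabla$ really realizes $\epsilon\circ(qY_\cA)_*^{-1}\circ\iota_*$ and not some other quasi-isomorphism, which presumably requires factoring $\Psi_\nabla$ through a chain-level model of $\oMC^{II}(\qPerf(\cA^{\op}))$ and checking compatibility with the quasi-Yoneda map—this is the role I expect Theorem~\ref{existence} to play. Independence of $\Psi_\nabla(y)$ from the choice of $\nabla$ on $H_0$ is then automatic since $y$ is already fixed and the image on $H_0$ is canonical.
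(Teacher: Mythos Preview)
Your three-step outline matches the paper's proof almost exactly: reduce via Proposition~\ref{comparison}, invoke the chain map built from $\nabla$ (Theorem~\ref{existence}), and evaluate on the explicit cocycle to obtain $\tr(\exp(-R))$ using \eqref{E72b}. Two refinements in the paper's execution are worth flagging.

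First, the map $\tr_\nabla$ in Definition~\ref{tracedefinition} is \emph{not} a morphism of mixed complexes $\oMC^{II}\to(\Omega^\bullet_{A/k},dh,d)$; its formula already involves powers of $u$ and of $\nabla^2$, and Theorem~\ref{existence} only asserts that it is a map of dg-$k[[u]]$-modules $\oHN^{II}(\cD)\to(\Omega^\bullet_{A/k}[[u]],ud+dh)$. Your displayed formula $\frac{1}{n!}\tr(\alpha_0[\nabla,\alpha_1]\cdots[\nabla,\alpha_n])$ is only the flat case (Example~\ref{flatconnections}); in general one must insert factors of $\nabla^{2j_i}$ and weight by $(-1)^J u^J/(J+n)!$ as in \eqref{trdef}. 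If you attempt to make a genuine mixed-complex morphism, the $B$-compatibility will fail when $\nabla^2\neq 0$.

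Second, the identification of $\tr_\nabla$ with $\epsilon\circ(qY_\cA)_*^{-1}\circ\iota_*$ (your Step~2's ``main obstacle'') is not done by factoring through a chain-level model of $\oMC^{II}(\qPerf(\cA^{\op}))$. Instead, Corollary~\ref{cor72} applies Theorem~\ref{existence} to the two-object full subcategory $\cD=\{(P,0),(A,0)\}\subseteq\qPerf(\cA^{\op})$: the restriction of $\tr_\nabla$ to the object $(A,0)$ with $\nabla_A=d$ is literally the HKR map $\epsilon$ (Example~\ref{trHKR}), and since both inclusions $\{(P,0)\}\hookrightarrow\cD\hookleftarrow\{(A,0)\}$ are pseudo-equivalences, this pins down $\tr_{\nabla_P}$ as the desired composite. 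No global model is needed.
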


\begin{rem}
As a reality check, let us apply Lemma~\ref{lem326b} to show that the element $\tr(\exp(-R)) \in (\Omega^\bullet_{A/k}[[u]], u d + dh)$ in the statement of
Theorem~\ref{mainthm} is indeed a cycle. We must show 
$$
ud (\tr(\exp(-R))) = -dh \wedge \tr(\exp(-R)).
$$
Applying Lemma~\ref{lem326b}, we have
$$
ud (\tr(\exp(-R))) = u\tr \circ [\n, \exp(-R)].
$$
Since, $\d_P$ is $A$-linear (Remark~\ref{linear}), line (2) in Remark~\ref{tr} implies
$$
u\tr \circ [\n, \exp(-R)] = \tr \circ [u\n + \d_P, \exp(-R)].
$$
Set $D:= u\n + \d_P$. It's easy to check that $[D, R] = dh$ (here, $dh$ denotes left multiplication by the form $dh$), and an easy induction argument shows
$$
[D, (-R)^i] = -i (-R)^{i-1} [D, R] = -i(-R)^{i-1}dh.
$$
Thus, $\tr \circ [D, \exp(-R)] = -dh \wedge \tr(\exp(-R)).$ 

In a similar way, one can prove directly that the class represented by $\tr(\exp(-R))$ in $H_0(\Omega^\bullet_{A/k}[[u]], u d + dh)$
is independent of the choice of connection $\n$ (of course, this is also a consequence of Theorem \ref{mainthm}).
\end{rem}

\begin{rem} \label{rem72}
For any integer $m \geq 0$ we have
$$
R^m = \sum_{p=0}^m \sum_{j_0+ \dots + j_p  = m-p} \n^{2j_0} \d_P'   \n^{2j_1} \d_P'   \cdots \d'_P \n^{2j_p} u^{m-p},
$$
where $\d'_P := [\n, \d]$, and the inner sum ranges over all $p+1$ tuples of non-negative integers that sum to $m-p$. 
It follows that
\begin{equation} \label{E72b}
\tr(\exp(-R)) = \sum_{n,J \ge 0}^\infty (-1)^{J+n} \sum_{j_0 + \cdots + j_n = J} \frac{\tr \left(\n^{2j_0} \d_P'   \n^{2j_1} \d_P'   \cdots \d'_P \n^{2j_n}
  \right)}{(J+n)!} u^J.
\end{equation}
\end{rem}

\begin{rem}
\label{levi}
Every $(P, \d_P) \in \qPerf(\cA)$ may be equipped with a connection, and so Theorem~\ref{mainthm}
does, in fact, give a
formula for $ch^{II}_{HKR}$ in general. Indeed, assume first that $F$ is a free $A$-module of finite rank. Equip $F$ with a basis $\{e_1,
\dots, e_r\}$, and define 
$\n_F: F \to  F  \otimes_A \Omega^1_{A/k}$
by $\sum a_ie_i \mapsto \sum (-1)^{|e_i|}  e_i \otimes da_i$. 
When $P$ is an arbitrary finitely generated projective $A$-module, we may choose a finite rank free $A$-module $F$ and maps
$$
i : P \to F \text{, } \pi: F \to P
$$
such that $\pi \circ  i = \id_P$; then $\n_P:=(\id \otimes \pi) \circ \n_F \circ i$ is a connection on $P$. 
A connection constructed in this manner is sometimes called a \emph{Levi-Civita connection} (cf. \cite[8.1.1]{Loday}).
\end{rem}

\begin{cor} 
\label{maincor}
Given $\cP = (P, \d_P) \in \Perf(\cA^\op)$, if $P$ admits a flat connection $\n$ (that is, a connection $\n$ such that $\n^2 = 0$), then
$$
ch^{II}_{HKR}(\cP)=\tr(\exp(- \d'_P) )= \sum_{j \ge 0}  \frac{\tr\left([\n, \d_P]^{2j}\right)}{(2j)!}  \in H_0(\Omega^\bullet_{A/k}[[u]], u d + dh).
$$
\end{cor}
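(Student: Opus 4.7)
The plan is to deduce this directly from Theorem~\ref{mainthm} together with a parity observation about the trace map. First, I would apply Theorem~\ref{mainthm} to $\cP = (P, \d_P)$ using the given flat connection $\n$. Since $\n^2 = 0$, the curvature simplifies to
$$
R = \n^2 u + [\n, \d_P] = [\n, \d_P] = \d'_P,
$$
so the theorem gives $ch^{II}_{HKR}(\cP) = \tr(\exp(-\d'_P))$. Expanding the exponential termwise yields
$$
\tr(\exp(-\d'_P)) = \sum_{k \ge 0} \frac{(-1)^k\tr\bigl((\d'_P)^k\bigr)}{k!}.
$$

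Second, I would verify that the odd-index terms vanish. The element $\d'_P = [\n, \d_P]$ lives in $\End_A(P) \otimes_A \Omega^1_{A/k}$ and has total degree $0$ (since $\n$ has degree $-1$ and $\d_P$ has degree $1$). By hypothesis $A$ is concentrated in even degrees, so every element of $\Omega^1_{A/k}$ has odd degree; consequently, writing $\d'_P = \sum_i \phi_i \otimes \omega_i$, each $\phi_i \in \End_A(P)$ has odd degree. Hence $(\d'_P)^{2j+1} \in \End_A(P) \otimes_A \Omega^{2j+1}_{A/k}$ has endomorphism component of odd degree, and by Remark~\ref{tr} its trace is zero. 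Keeping only the surviving even-index terms (for which $(-1)^{2j} = 1$) gives the claimed formula
$$
ch^{II}_{HKR}(\cP) = \sum_{j \ge 0} \frac{\tr\bigl([\n, \d_P]^{2j}\bigr)}{(2j)!}.
$$

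There isn't really a hard step here — the content is all in Theorem~\ref{mainthm}. The only point requiring genuine verification is the parity argument, and even that follows mechanically from the grading conventions fixed at the start of Section~\ref{HKRsection} together with Remark~\ref{tr}(1). One sanity check worth noting is that the resulting class lies in the subring of $\Omega^\bullet_{A/k}[[u]]$ independent of $u$; closedness under $ud + dh$ for this element reduces to $dh \wedge \tr(\exp(-\d'_P)) = 0$ in cohomology, which is automatic from the general argument in the remark following Theorem~\ref{mainthm} specialized to $\n^2 = 0$.
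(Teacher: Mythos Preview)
Your proposal is correct and follows essentially the same approach as the paper's own proof: apply Theorem~\ref{mainthm} with the flat connection so that $R = \d'_P$, then invoke Remark~\ref{tr}(1) to kill the odd-index terms in the exponential. The paper's proof is a single sentence to this effect; your version simply spells out the parity argument in more detail.
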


\begin{proof}
This follows from Theorem~\ref{mainthm}, using that $\tr(\alpha \otimes \omega) = 0$ for $\a \in \End_A(P)$ and $\o \in \Omega^{\bullet}_{A/k}$ whenever $\a$ 
has odd degree (see Remark~\ref{tr}).
\end{proof}

\subsection{Key technical result}
Let $\cD$ denote a
full cdg subcategory of $\qPerf(\cA^{op})$ consisting of
objects with trivial differential. Let $\cP$ be an object in $\cD$ and $\n$ a connection on $\cP$. The key to proving Theorem \ref{mainthm}
is the construction of a map
$$
\oHN^{II}(\cD) \to (\Omega^\bu_{A/k}[[u]], ud + dh)
$$
of dg-$k[[u]]$-modules which sends $\id_P \in \oHN^{II}(\End(\cP)) \subseteq \oHN^{II}(\cD)$ to $\tr(\exp(-R))$.
The existence of such a map is the content of Theorem \ref{existence}.

\begin{defn} 
\label{tracedefinition}
Let $\cD$ be a full subcategory of $\qPerf(\cA^\op)$ consisting of objects with trivial differentials. 
Let $\n$ denote a family of connections 
$$\n_P: P \to P \otimes_A \Omega^1_{A/k}$$
indexed by the objects $(P, 0) \in \cD$. Define
$$
\tr_\n: \oHoch^{II}(\cD)[[u]] \to \O^\bullet_{A/k}[[u]] 
$$
to be the $k[[u]]$-linear map given as follows: for $n \ge 0$, objects $P_0, \dots, P_n$ of $\cD$, and morphisms
$$
P_0 \xla{\a_0} P_1 \xla{\a_1} P_2 \xla{\a_2}  \cdots \xla{\a_{n-1}} P_n \xla{\a_n} P_0,
$$
define
\begin{equation} \label{trdef}
\tr_\n\left(\a_0 [\ov{\a_1} | \cdots | \ov{\a_n}]\right)
=  \sum_{J=0}^\infty \sum_{j_0 + \cdots+  j_n = J} (-1)^J\frac{\tr\left(\a_0 \n_1^{2j_0} \a_1' \n_2^{2j_1} \a_2' \cdots \n^{2j_{n-1}}_n \a_n' \n^{2j_n}_0\right) }{(J +
  n)!} u^J,  \\
\end{equation}
where $\n_i := \n_{P_i}$ (with $\n_{n+1} = \n_0$),
$\a_i' := \n_i \circ \a_i - (-1)^{|\a_i|} \a_i \circ \n_{i+1}$,
and the inner sum ranges over $(n+1)$-tuples of non-negative integers that sum to $J$.
\end{defn}

\begin{rem}
Note that the coefficient of $u^J$ in formula (\ref{trdef}) is 0 for $J \gg 0$, since $\Omega^i_{A/k} = 0$ for $i > \dim(A)$ (cf. Remark \ref{finitesum}).
\end{rem}

\begin{ex} If $\cD$ consists of just one object $(P,0)$, then $\n$ consists of a choice of connection for $P$, and we have
$$
\tr_\n\left(\a_0 [\ov{\a_1} | \cdots | \ov{\a_n}]\right)
=  \sum_{J=0}^\infty \sum_{j_0 + \cdots+  j_n = J} (-1)^J\frac{\tr\left(\a_0 \n^{2j_0} \a_1' \n^{2j_1} \a_2' \cdots \n^{2j_{n-1}} \a_n' \n^{2j_n}\right) }{(J +
  n)!} u^J,  \\
$$
where $\a_i \in \End_A(P)$ for all $i$, and $\a'_i$ is the derivative of $\a_i$ with respect to $\n$. 
\end{ex}

\begin{ex} 
\label{flatconnections}
If each $\n_i$ is a flat connection, then
$$
\tr_\n(\a_0 [\ov{\a_1} | \cdots | \ov{\a_n}])  = 
\frac{\tr\left(\a_0 \a_1'  \cdots \a_n' \right) }{n!}.
$$
\end{ex}

\begin{ex} \label{trHKR} 
Suppose $\cD$ consists of just the object
$\cP = (A,0) \in \qPerf(\cA^\op)$. Let $\theta : A \xra{\cong} \End_A(A)$ denote the isomorphism of $k$-algebras given by $\theta(a)(x) = ax$, and let 
$\n$ be the de Rham differential $A \xra{d} \O^1_{A/k}$. 
For any $a \in A$, we have
$$
[\n, \theta(a)](x) = d(ax) - (-1)^{|a|} a dx = da \cdot x + (-1)^{|a|} a dx - (-1)^{|a|} a dx = da \cdot x
$$
for all $x \in A$, and thus $[\n, \theta(a)]$ coincides with left multiplication by $da$ in $\End_A(A) \otimes \Omega^\bullet_{A/k}$. 
Since $\n$ is flat, Example \ref{flatconnections} implies
$$
\tr_\n: \oHoch^{II}(A)[[u]] \xra{\theta_*} \oHoch^{II}(\End_A(A))[[u]] \xra{\tr_\n} \O^\bullet_{A/k}[[u]]
$$
is given by
$$
\tr_\n \left(a_0[a_1| \cdots |a_n]\right) = 
\frac{a_0 da_1 \cdots da_n}{n!}.
$$
Thus, $\tr_\n$ coincides, in this case, with the Hochschild-Kostant-Rosenberg isomorphism $\e$ (see Theorem~\ref{HKR}).
\end{ex}

The key result concerning the map $\tr_\n$ is the following:

\begin{thm} 
\label{existence} Let $k$ be a field of characteristic $0$,
let $\cA = (A,h)$ be an essentially smooth curved $k$-algebra, and let $\cD$ be a full subcategory of 
$\qPerf(\cA^\op)$ consisting of objects with trivial differentials. 
For any choices of connections $\n$ on the objects of $\cD$, the map $\tr_\n$ induces a morphism
$$
\oHN^{II}(\cD) \to (\O^\bullet_{A/k}[[u]], ud + dh)
$$
of dg-$k[[u]]$-modules; that is,
$$
\tr_\n \circ (b_2 + b_0 + uB) = (ud + dh) \circ \tr_\n.
$$
\end{thm}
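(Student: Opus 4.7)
The plan is to verify the identity $\tr_\n \circ (b_2 + b_0 + uB) = (ud + dh) \circ \tr_\n$ by a direct computation, treating each summand of the source differential separately and matching it with the appropriate piece of $ud + dh$. The key inputs are Lemma~\ref{lem326b}, which gives $d \circ \tr = \tr \circ [\widetilde{\n}, -]$, together with the vanishing of the trace of any graded commutator (Remark~\ref{tr}). Since objects of $\cD$ have trivial differentials, the term $b_1$ vanishes on $\oHoch^{II}(\cD)$, so there are only three pieces to check.

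First I would assemble the connections $\{\n_i\}$ into a single degree $-1$ ``universal'' derivation $\widetilde{\n}$ on the graded algebra $\bigoplus_{i,j} \Hom_A(P_i, P_j) \otimes_A \Omega^\bullet_{A/k}[[u]]$. Under this packaging, $\a_i' = [\widetilde{\n}, \a_i]$, and the formula~\eqref{trdef} becomes uniform. Because objects of $\cD$ have zero differential, each $\widetilde{\n}^2$ is $A$-linear and graded-commutes with $\widetilde{\n}$, so $[\widetilde{\n}, \widetilde{\n}^{2j}] = 0$ and powers of $\n^2$ pass freely through $[\widetilde{\n}, -]$. Consequently, expanding $d(\tr(\a_0 \n_1^{2j_0} \a_1' \cdots \n_0^{2j_n})) = \tr([\widetilde{\n}, \a_0 \n_1^{2j_0} \a_1' \cdots])$ via the Leibniz rule yields a sum in which the only non-trivial brackets are $[\widetilde{\n}, \a_0] = \a_0'$ and $[\widetilde{\n}, \a_i'] = [\widetilde{\n}^2, \a_i]$; the latter produces an extra power of $\n^2$, which is exactly the combinatorial effect of lowering the bar-length through $B$.

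Next, for the $b_2$ contribution, I would expand $\tr_\n(b_2(\a_0[\bar{\a_1}|\cdots|\bar{\a_n}]))$ and use the Leibniz rule $(\a_i\a_{i+1})' = \a_i' \a_{i+1} + (-1)^{|\a_i|} \a_i \a_{i+1}'$ to split the merged factor into its two parts. Reindexing the resulting double sum and using cyclic symmetry of the trace for the wrap-around last term should show that the $b_2$ contribution is essentially absorbed into a telescoping collapse of consecutive $\a_i'\n^{2j}\a_{i+1}'$ patterns, leaving only terms that reappear on the right-hand side. For the $b_0$ piece, the curvature element of $(P,0) \in \cD$ is just $\rho_h \in \End_A(P)$, which is central since $h \in A$ and $A$ is commutative; hence $[\widetilde{\n}, \rho_h] = \rho_{dh}$, and inserting $\rho_h$ into position $j$ of a bar expression produces a factor of $dh$ in the resulting de Rham form. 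Summing over $j$ reproduces $dh \wedge \tr_\n(\cdot)$.

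Finally, for the $uB$ piece, I would apply the formula~\eqref{Bformula} and use graded cyclicity of the trace to identify $\tr_\n \circ uB$ with the ``extra-power-of-$\n^2$'' terms produced when $d$ meets a primed morphism, completing the match with $(ud + dh) \circ \tr_\n$. The main obstacle will be bookkeeping: tracking the alternating signs in~\eqref{trdef} through the cyclic shifts of $B$, matching the factorial denominators $(J+n)!$ against $(J+n+1)!$ after $B$ raises the bar-length by one, and against $(J+n-1)!$ after $b_2$ lowers it; these factorials must combine via identities like $\frac{1}{(J+n+1)!} \sum_l 1 = \frac{1}{(J+n)!}$ to account for the cyclic averaging. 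Getting all signs and combinatorial coefficients to line up is the delicate heart of the argument; the conceptual content, however, is just Lemma~\ref{lem326b} together with cyclicity of the trace.
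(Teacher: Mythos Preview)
Your strategy is sound and broadly parallel to the paper's: the key ingredients --- Lemma~\ref{lem326b}, cyclic invariance of the trace, the Leibniz rule for $[\tn,-]$, and the observation that inserting $\rho_h$ via $b_0$ produces multiplication by $dh$ --- are exactly right, and the factorial bookkeeping you anticipate is indeed the heart of the matter. However, you have the matching between the pieces of $ud\circ\tr_\n$ and the operators $b_2$, $B$ backwards. When you expand $ud\,\tr_\n(\a_0[\ov{\a_1}|\cdots|\ov{\a_n}])$ via Lemma~\ref{lem326b}, the term where $[\tn,-]$ hits the unprimed $\a_0$ (producing $\a_0'$) is what matches $\tr_\n\circ uB$: applying $B$ moves $\a_0$ into a bar slot so it too becomes primed, and the cyclic sum together with the extra index combine via $\sum_{i=0}^n(j_i+1)=J+n+1$ to convert $1/(J+n+1)!$ into $1/(J+n)!$ (not the identity you wrote). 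Conversely, the terms where $[\tn,-]$ hits $\a_i'$ to give $[\tn^2,\a_i]=\n_i^2\a_i-\a_i\n_{i+1}^2$ --- your ``extra power of $\n^2$'' terms --- are what match $\tr_\n\circ b_2$ after a telescoping cancellation: it is $b_2$, not $B$, that lowers the bar-length, and splitting $(\a_i\a_{i+1})'=\a_i'\a_{i+1}+(-1)^{|\a_i|}\a_i\a_{i+1}'$ leaves one factor unprimed, matching the unprimed $\a_i$ on the other side. Once this swap is corrected, your sketch goes through.

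The paper's packaging is different and worth noting: rather than computing directly with $\tr_\n$, it introduces an auxiliary $k[[u]]$-linear cdg category $\cC$ with the same objects as $\cD$ but morphism spaces $\Hom_A(P,P')\otimes_A\Omega^\bullet_{A/k}[[u]]$, differential $u[\n,-]$, and curvature $u^2\n_P^2$. It then defines, for an \emph{arbitrary} cdg category, a map $\phi:\oHoch^{II}(\cC)\to(\cC/[\cC,\cC])[[v]]$ (with $v$ a degree $-2$ formal variable) by inserting powers of the curvature between the $d(a_i)$'s, and proves the single abstract identity $v\,\overline{d}\circ\phi=-v\,\phi\circ b_2+\phi\circ B$. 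The map $\tr_\n$ is recovered by composing $\phi$ with the trace $\cC/[\cC,\cC]\to\Omega^\bullet_{A/k}[[u]]$, setting $v=-u^{-1}$, and absorbing a sign of $(-1)^n$; the $b_0$ piece is handled by a parallel abstract identity $\phi\circ b_0^z=v\,\lambda_{d(z)}\circ\phi$ for any central degree-two element $z$. This separates the combinatorics (valid for any cdg category) from the specific de~Rham application and makes the $\n^{2j_i}$ factors in~\eqref{trdef} conceptually transparent: they are simply insertions of the curvature of $\cC$.
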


\begin{rem} \label{rem319}
  It is clear from the definition of $\tr_\n$ that it is natural with respect to inclusions of full subcategories (provided the same connections are used on
  the smaller category).
\end{rem}

\begin{rem} \label{rem320}
The proof shows that
$$
\tr_\n \circ (b_2 + uB) = ud \circ \tr_\n
\and
\tr_\n \circ b_0 = dh \circ \tr_\n
$$
both hold.
\end{rem}

We relegate the highly technical proof of Theorem~\ref{existence} to the appendix;
see Corollaries \ref{cor321} and \ref{cor321b}.

\subsection{Proof of the main theorem}

Recall the map
$$
ch_{\cP}^{II}: \HN_0^{II}(\End(\cP)) \to \HN_0^{II}(\cA)
$$
given in Definition \ref{def41}. When $\cA$ is an essentially smooth curved $k$-algebra, we compose
$ch_{\cP}^{II}$ with the HKR isomorphism to obtain the map
$$
ch_{\cP, HKR}^{II}: \HN_0^{II}(\End(\cP)) \to (\O_{A/k}^\bullet [[u]], ud + dh).
$$

\begin{cor}
\label{cor72}
Let $\cA$ be an essentially smooth curved $k$-algebra, $\cP = (P, 0) \in \qPerf(\cA^\op)$ a perfect right $\cA$-module with trivial differential, and $\n$ a connection on $\cP$. 
Then the map 
$$
\HN_0^{II}(\End(\cP)) \to H_0(\Omega_{A/k}^\bullet [[u]], ud + dh)
$$
induced by $\tr_{\n}$ is equal to $ch^{II}_{\cP, HKR}$.
\end{cor}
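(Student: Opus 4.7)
The plan is to deduce Corollary \ref{cor72} as a formal naturality argument from Theorem \ref{existence}, Example \ref{trHKR}, and the quasi-Morita invariance results (Theorem \ref{pseudoqi} and Proposition \ref{thmPP}). The genuine content is already housed in Theorem \ref{existence}; everything else is bookkeeping.

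First I would introduce an intermediate cdg category. Let $\cD$ be the full subcategory of $\qPerf(\cA^\op)$ on objects of the form $(M,0)$ with trivial differential; this contains both $\cP = (P,0)$ and $\cA^Y := qY_\cA(*) = (A,0)$, and the inclusions
$$
\{\cA^Y\} \into \cD \into \qPerf(\cA^\op)
$$
are both pseudo-equivalences (every object of $\cD$ is a shifted summand of a finite direct sum of copies of $\cA^Y$, with all intermediate objects again in $\cD$, and $\qPerf(\cA^\op)$ is generated from $\cA^Y$ under the four operations by Proposition \ref{PPpseudo}). I would equip each object of $\cD$ with a connection, for instance a Levi--Civita one (Remark \ref{levi}), arranging for the given $\n$ on $\cP$ and the de Rham connection on $\cA^Y$. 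Theorem \ref{existence} then produces
$$
\tr_\n : \oHN^{II}(\cD) \to (\Omega^\bullet_{A/k}[[u]], ud + dh),
$$
and this map is natural with respect to inclusions of full subcategories by Remark \ref{rem319}.

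The second step is to verify that the triangle
$$
\xymatrix@C=1.5em{
\HN_0^{II}(\cA) \ar[rr]^-{(qY_\cA)_*}_-{\cong} \ar[dr]_-{\e} & & \HN_0^{II}(\cD) \ar[dl]^-{\tr_\n} \\
& H_0(\Omega^\bullet_{A/k}[[u]], ud + dh) &
}
$$
commutes. Via the strict isomorphism $\cA \xra{\cong} \End(\cA^Y)$ of Example \ref{endAop}, Example \ref{trHKR} identifies $\tr_\n$ on $\{\cA^Y\}$ with the HKR map $\e$, and naturality of $\tr_\n$ under $\{\cA^Y\} \into \cD$ promotes this identification to the displayed triangle.

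Finally I would unwind Definition \ref{def41}: the map $ch^{II}_{\cP, HKR}$ is the composition
$$
\HN_0^{II}(\End(\cP)) \to \HN_0^{II}(\qPerf(\cA^\op)) \xleftarrow{(qY_\cA)_*} \HN_0^{II}(\cA) \xrightarrow{\e} H_0(\Omega^\bullet_{A/k}[[u]], ud + dh),
$$
with the middle arrow inverted. Both the leftmost arrow and $(qY_\cA)_*$ factor through $\HN_0^{II}(\cD)$, and by Theorem \ref{pseudoqi} the induced map $\HN_0^{II}(\cD) \to \HN_0^{II}(\qPerf(\cA^\op))$ is already an isomorphism, so we may replace $\qPerf(\cA^\op)$ by $\cD$ throughout. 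Combining this factorization with the commutative triangle above and one further application of naturality of $\tr_\n$ under $\End(\cP) \into \cD$ yields $ch^{II}_{\cP, HKR} = \tr_\n|_{\End(\cP)}$. I do not anticipate any substantive obstacle beyond the diagram chase, since the technical heart of the argument is already encapsulated in Theorem \ref{existence}.
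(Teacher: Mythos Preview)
Your proposal is correct and follows essentially the same approach as the paper's proof: both use Theorem \ref{existence} on an intermediate full subcategory $\cD \subseteq \qPerf(\cA^\op)$ of trivial-differential objects, identify $\tr_\n$ with the HKR map on the quasi-Yoneda image via Example \ref{trHKR}, and then use naturality of $\tr_\n$ together with pseudo-equivalence invariance to conclude. The only difference is cosmetic: the paper takes $\cD$ to be the two-object category $\{(P,0),(A,0)\}$, which is more economical since it avoids having to choose connections on an entire skeleton of trivial-differential objects, whereas you take $\cD$ to be the full subcategory on all such objects; the diagram chase is otherwise identical.
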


\begin{proof} Let $\cD$ be the full subcategory of $\qPerf(\cA^\op)$ consisting of just the two objects $(P,0)$ and $(A,0)$. Let $\n_P = \n$ (the connection in
  the statement) and $\n_A = d$, the de Rham differential.
By Theorem~\ref{existence}, the map
$$
\tr_\n: \HN_0^{II}(\cD) \to H_0(\Omega_{A/k}^\bullet [[u]], ud + dh)
$$
defined by \eqref{trdef} is a map of dg-$k[[u]]$-modules.
Moreover, using the naturality of $\tr_\n$ (Remark \ref{rem319}) and Example \ref{trHKR}, we see that the diagram
$$
\xymatrix{
\HN_0^{II}(\End(\cP) ) \ar[dr]_-{\tr_{\n_P}} \ar[r]^-{(\iota_1)_*} & \HN_0^{II}(\cD) \ar[d]^-{\tr_{\n}} & \HN_0^{II}(\cA) \ar[l]_-{(\iota_2)_*}   \ar[dl]^-{\epsilon} \\
& H_0(\Omega_{A/k}^\bullet [[u]], ud + dh) \\
}
$$
commutes, where $\epsilon$ is the HKR map, and $\iota_1, \iota_2$ are the evident inclusion functors.
These inclusion functors are pseudo-equivalences; thus, by Proposition~\ref{functoriality}, the horizontal maps in the diagram are
isomorphisms. Since the HKR map $\epsilon$ is an isomorphism, so are $\tr_{\n}$ and $\tr_{\n_P}$.
Now, consider the composition 
$$
T: \HN_0^{II}(\cD) \to \HN_0^{II}(\qPerf(\cA^\op)) \xla{\cong} \HN_0^{II}(\cA) \xrightarrow{ \epsilon} H_0(\Omega_{A/k}^\bullet
[[u]], ud + dh),
$$
where the first map is induced by inclusion.
The map $T$ also satisfies $T \circ (\iota_2)_* = \epsilon$, and hence we must have
$T = \tr_{\n}$. It follows that
$$
\tr_{\n_P} = \tr_{\n} \circ (\iota_1)_* = T \circ (\iota_1)_*,
$$
and it is clear from the definitions that
$T \circ (\iota_1)_* = ch^{II}_{\cP, HKR}$. 
\end{proof}

We now prove the main theorem:

\begin{proof}[Proof of Theorem~\ref{mainthm}] 
By Proposition~\ref{comparison}, we have 
$$
ch^{II}_{HKR}(\cP)= ch^{II}_{\cP^\nat, HKR}(\sum_n  (-1)^n [ \underbrace{\d_P | \cdots | \d_P}_{n \text{ copies}}]).
$$
Corollary~\ref{cor72} applied to $\cP^\nat$ thus yields
$$
ch^{II}_{HKR}(\cP) = \sum_n \sum_{j_0, \dots, j_n}   (-1)^{j_0 + \cdots + j_n +n} \frac{\tr(\n^{2j_0} \d'_P  \n^{2j_1} \d'_P \cdots \d'_P \n^{2j_n})}{(j_0 +
  \cdots + j_n +n )!} u^{j_0 + \cdots + j_n},
$$
which, by~(\ref{E72b}), is equal to $\tr(\exp(-R))$.
\end{proof}

\begin{rem}
\label{segalremark}
In the setting of Theorem \ref{mainthm}, it seems likely that one can obtain an explicit formula for $ch^{II}_{HKR}$ using instead the results of
Segal in \cite{segal}, in the following  way. In \cite[Proposition 2.13]{segal}, Segal constructs an explicit quasi-isomorphism 
$$
Tr: \Hoch^{II}(\cD) \xra{\simeq} \Hoch^{II}(\cA),
$$
where $\cD$ is the full cdg subcategory of $\qPerf(\cA^{\op})$ spanned by objects with trivial differential. The map
$Tr$ is an adaptation of the ``generalized trace map''
(cf. \cite[Section 1.2]{Loday}) to the setting of curved modules. Unfortunately, $Tr$ does not induce a map on negative cyclic homology of the second kind, because it does not commute with the $B$ operator. For example, 
take $A = \C[x_1, x_2, x_3]/(x_1^2 + x_2^2 + x_3^2 - 1)$ (concentrated in degree 0), $h = 0$, and $P$ the image of the idempotent 
$$
\frac{1}{2} \begin{pmatrix}
1 - x_1 & -x_2 -ix_3 \\
-x_2 + ix_3 & 1 + x_1\\
\end{pmatrix}: A^{\oplus 2} \to A^{\oplus 2}.
$$
Then $(B \circ Tr)(\id_P) \ne (Tr \circ B)(\id_P)$. 

We believe that this problem may be rectified by working in the more general setting of non-unital cdga's, and slightly
modifying Segal's map so that it is defined using the version of the cyclic bar complex for nonunital cdga's (cf. \cite[Section 3.2]{shkly}). 
Granting this, it follows that the modified version of the map $Tr$ induces a map $HN_0^{II}(\cD) \to HN_0^{II}(\cA)$. 
Moreover, the proof of Corollary \ref{cor72} would then hold when $\tr_\n$ is replaced by $\e \circ Tr$ (except there is no need to choose any connections),
and the above proof of Theorem
\ref{mainthm} would therefore yield another explicit formula for $ch^{II}_{HKR}$.

In more detail, given a perfect left $\cA$-module $(P, \delta_P)$, one may realize it as a summand of $(F, \delta_F)$ with $F$ a free $A$-module. 
Let $e$ be the idempotent endomorphism of $F$ with image $P$. Choose a basis of $F$  and, for any endomorphism $\gamma$ of $F$, write $\gamma'$ for
its derivative with respect to the associated Levi-Cevita connection (that is, if we represent $\gamma$ as a matrix, then $\gamma' = d\gamma$, the result of applying
the de Rham differential to the entries of this matrix).
Then, assuming the extension of Segal's map to the non-unital setting works out as we expect,
the formula for the Chern character of $(P, \delta_P)$ arising from Segal's map would be
\begin{equation} \label{SegalChern}
\begin{aligned}
& \sum_j \frac{1}{j!} tr(e (\d'_F)^j) \\
& + \sum_{i \geq 1} \sum_{j_0, \dots, j_{2i}}
(-1)^i \frac{(2i-1)!}{(i-1)!} \frac{1}{(2i +J )!} \tr\left((2e-1) (\d'_F)^{j_0} e' (\d'_F)^{i_1} e' \cdots e' (\d'_F)^{j_{2i}}\right),
\end{aligned}
\end{equation}
where $j_1,  \dots, j_{2i}$ range over all non-negative integers, and
$J := j_0 + \cdots + j_{2i}$.

Assuming the details check out, the complicated formula \eqref{SegalChern} and the formula of Theorem \ref{mainthm} must agree as homology classes,
since they both coincide with $ch^{II}_{HKR}(P, \d_P)$. It would be pleasing to give a direct proof
of this fact, but we have been unable to do so.
\end{rem}

\section{Examples}
\label{examplesection}
Throughout this section, assume $\on{char}(k) = 0$.

\begin{ex}
\label{mfchern}
Suppose $\G = \Z/2$ and $\cA = (A, -f)$, where $A$ is the localization of a smooth $k$-algebra at a maximal ideal $\fm$, and $f \in \fm \subseteq A$ is a non-zero-divisor. Recall that $\Perf(\cA^\op)$ is identical to the differential $\Z/2$-graded category $\MF(A, f)$ of matrix factorizations of $f$. The HKR isomorphism yields
$$
\HN_0^{II}(\cA) \xra{\cong} H_0(\Omega^\bullet_{A/k}[[u]], ud - df).
$$
Let $\cP= (P, \d_P) \in \Perf(\cA^{\op})$, and write $P = P_0 \oplus P_1$, where $P_0$ (resp. $P_1$) is the even (resp. odd) degree component of $P$. Since $f$ is a
non-zero-divisor, $r:=\rk(P_0) = \rk(P_1)$. Upon choosing bases of $P_0$ and $P_1$, we may identify $\d_P$ with a matrix of the form $\begin{pmatrix} 0 & \alpha \\ \beta & 0 \end{pmatrix}$, where $\alpha:
P_1 \to P_0$ and $\beta: P_0\to P_1$ are $(r \times r)$ matrices over $A$.  Using this choice of basis, we also construct a Levi-Civita connection $\n$ on $\cP$
(Remark~\ref{levi}).  Since this connection is flat,
we have 
$$
R = [\n, \d_P] = \begin{pmatrix} 0 & d\alpha \\ d\beta & 0 \end{pmatrix} \in \End_A(P) \otimes_A \Omega^\bullet_{A/k}[[u]],
$$
where $d\alpha, d\beta$ denote the matrices obtained by applying the de Rham differential entry-wise to $\alpha, \beta$. Thus, for any $j \ge 0$, we have
$$
R^{2j} = [\n, \d_P]^{2j} = \begin{pmatrix} (d\alpha d\beta)^j & 0 \\ 0 & (d\beta d\alpha)^j \\ \end{pmatrix},
$$
and so $\tr(R^{2j}) = \tr((d\alpha d\beta)^j) -  \tr((d\beta d\alpha)^j)$.
By Remark~\ref{tr},
\begin{displaymath}
   \tr([\n, \d_P]^{2j})  = 
\begin{cases}
       0 &  \text{ if $ j = 0$} \\
       2 \tr((d\alpha d\beta)^j) &  \text{ if $j > 0$.} \\
     \end{cases}
\end{displaymath} 
By Corollary~\ref{maincor},
$$
ch^{II}_{HKR}(\cP) = \sum_{j \ge 1}  \frac{ 2\tr((d\alpha d\beta)^j )}{(2j)!}\in H_0(\Omega^\bullet_{A/k}[[u]], ud - df).
$$

Recall that the canonical map $\HN^{II}_0(\cA) \to \HH^{II}_0(\cA)$ is given by setting $u = 0$, and, in this case, this map may be identified, via the HKR isomorphism,
with 
$$
H_0(\Omega^\bullet_{A/k}[[u]], ud - df) \xra{u \mapsto 0} H_0(\Omega^\bullet_{A/k}, -df).
$$
Let $ch^{II}_{HH}(\cP)$ denote the image of $ch^{II}_{HN}(\cP)$ in $\HH^{II}_0(\cA)$. The class $ch^{II}_{HH}(\cP)$ corresponds, via the HKR isomorphism, to the class
\begin{equation}
\label{HHchern}
\sum_{j \ge 1}  \frac{ 2\tr((d\alpha d\beta)^j )}{(2j)!}  \in H_0(\Omega^\bullet_{A/k}, -df).
\end{equation}
As discussed in the introduction, Segal obtains formula \eqref{HHchern} in \cite[3.1]{segal}. 

When $A/f$ has at most an isolated singularity (meaning that $(A_\fp/f)$ is smooth for all primes $\fp \ne \fm$), then, letting $n = \dm(A)$, we have 
$$
H_0(\Omega^\bullet_{A/k}, -df) \cong
\begin{cases}
\frac{\Omega^n_{A/k}}{df \smsh \Omega^{n-1}_{A/k}}, & \text{ $n$ even} \\
0 , & \text{ $n$ odd}. \\
 \end{cases}
$$
Moreover, when $n$ is even, $ch^{II}_{HN}(\cP)$ corresponds to
\begin{equation}
\label{pvformula}
\overline{\frac{2 tr((d\a d\b)^{\frac{n}{2}})}{n!}} \in \frac{\Omega^n_{A/k}}{df \smsh \Omega^{n-1}_{A/k}}.
\end{equation}
In this case, the canonical map $\HH_0(\MF(A,f)) \to \HH^{II}_0(\MF(A, f))$ is an isomorphism \cite[Section 4.8]{PP}. Thus, \eqref{pvformula}
coincides with the formula,
due to Polishchuk-Vaintrob, of the Chern character of a matrix
factorization of an isolated singularity $f \in k[x_1, \dots, x_n]$ taking values in $\HH_0(\MF(A,f))$; see 
\cite[Corollary 3.2.4]{PV} for the precise statement of  their result.
\end{ex}

\begin{ex}
\label{unstable}
Let $A$ be an essentially smooth $k$-algebra, let $f_1, \dots, f_c \in A$ be a regular sequence, and set $R: = A/(f_1, \dots, f_c)$. Take $\G = \Z$. 
Let $D^b_{dg}(R)$ denote the differential $\Z$-graded category whose objects are bounded below complexes of finitely generated projective
$R$-modules whose total homology is finitely generated, so that $D^b_{dg}(R)$ is a dg enhancement of the ordinary derived category $D^b(R)$. In this example, we give a formula for the Chern character map 
$$
K_0(D_{dg}^b(R)) \to \HN_0(D^b_{dg}(R)).
$$

Let $G_0(R)$ denote the Grothendieck group of the exact category $\on{mod}(R)$ of finitely generated $R$-modules. Then there is an isomorphism
\begin{equation} \label{E627b}
G_0(R) \xra{\cong} K_0(D_{dg}^b(R)) = K^\Delta_0(D^b(R))
\end{equation}
given by sending the class of a module $M$ to the class of $M$ regarded as a complex concentrated in degree $0$.
The inverse sends the class of a complex $C$ to $\sum_i (-1)^i [H^i(C)]$.
Note that the Grothendieck group of $D^b_{dg}(R)$ coincides, by definition, with $K^\Delta_0([D^b_{dg}(R)]) = K_0^\Delta(D^b(R))$ \cite[Section 3.2.32]{schlichting}.

Let $\cB$ denote the curved differential $\Z$-graded algebra $(A[T_1, \dots, T_c],  -(f_1T_1 + \cdots + f_cT_c))$, where $T_1, \dots, T_c$ are degree two indeterminants. A theorem
of Burke-Stevenson 
\cite[Theorem 7.5]{burke2013}  gives a quasi-equivalence of dg categories
\begin{equation}
\label{BS}
D_{dg}(R) \xra{\simeq} \Perf(\cB) 
\end{equation}
and therefore a commutative diagram
\begin{equation} \label{E627c}
\xymatrix{
K_0(D_{dg}^b(R)) \ar[r] \ar[d]^\cong & \HN_0(D^b_{dg}(R)) \ar[r] \ar[d]^\cong & \HN^{II}_0(D^b_{dg}(R)) \ar[d] \\
K_0(\Perf(\cB)) \ar[r] & \HN_0(\Perf(\cB)) \ar[r]^{\cong} & \HN_0^{II}(\Perf(\cB)). 
}
\end{equation}
The two left vertical arrows are isomorphisms because $K_0( - )$ and $\HN_0( - )$ send quasi-equivalences to isomorphisms. The bottom-right map is an isomorphism by
\cite[Corollary A in Section 4.7]{PP}.

Combining \ref{E627b}, Diagram \eqref{E627c}, Proposition \ref{thmPP}, and the HKR theorem, one sees that the Chern character map
$$K_0(D_{dg}^b(R)) \to \HN_0(D^b_{dg}(R))$$
is given, up to the canonical isomorphisms described above, by a map

\begin{equation} \label{E627}
ch : G_0(R) \to H_0(\Omega^\bullet_{A[T_1, \dots, T_c]/k}[u], ud - d(f_1T_1 + \cdots + df_cT_c)).
\end{equation}
In detail, $ch$ is given by the composition
$$
G_0(R)   \xra{(\ref{E627b})}  K_0(D^b_{dg}(R))  \xra{(\ref{BS})}   K_0(\Perf(\cB))  \xra{ch^{II}_{HKR}} H_0(\Omega^\bullet_{A[T_1, \dots, T_c]/k}[u], ud - d(f_1T_1 + \cdots + df_cT_c)).
$$
We proceed to describe this map explicitly.

Let $M$ be a finitely generated $R$-module. We first describe the image of $[M]$ under the map
$$
\psi: G_0(R) \xra{\cong} K_0(\Perf(\cB))
$$
induced by (\ref{E627b}) and (\ref{BS}). 

For any $J = (j_1, \dots, j_c) \in \Z^c_{\ge 0}$, set $| J | = \sum_{i = 1}^c j_i$. For each $1 \le i \le c$, let $e_i$ denote the element of $\Z^c_{\ge 0}$ with $1$ in the
$i^{\on{th}}$ component and 0 elsewhere.  
Given a finite $A$-projective resolution $P$ of $M$, we equip  $P$ with a \emph{system of higher homotopies}; i.e., a family of endomorphisms 
$$
\sigma_J : P \to \Sigma^{-2|J| + 1}P
$$
indexed by $J \in \Z^c_{\ge 0}$ such that
\begin{itemize}
\item $\sigma_0$ is the differential on $P$,
\item for each $1 \le i \le c$, the map $\sigma_0 \sigma_{e_i} + \sigma_{e_i} \sigma_0$ is given by multiplication by $f_i$, and
\item for each $J$ with  $|J| \ge 2$, $\sum_{J_1 + J_2 = J} \sigma_{J_1} \sigma_{J_2} = 0$.
\end{itemize}
(See \cite[Proposition 3.4.2]{eisenbudpeeva} for a proof that such a system of higher homotopies exists.) Then 
$$
\psi([M]) = [(P[T_1, \dots, T_c], \d)],
$$ 
where $\d := \sum_{J \in \Z^c_{\geq 0}} \s_J T^J$. (For $J = (i_1, \dots, i_c)$, $T^J$ denotes  $T_1^{i_1} \cdots T_c^{i_c}$.) Thus,
\begin{equation}
\label{chernci}
ch([M]) = (ch_{HKR}^{II} \circ \psi )([M])  = ch_{HKR}^{II}( P[T_1, \dots, T_c], \d  ),
\end{equation}
and one may compute $ch_{HKR}^{II}( P[T_1, \dots, T_c], \d  )$ using Theorem \ref{mainthm}.

We work out this computation in detail in the case where $A$ is local and $c = 1$. Let $M$ be a finitely generated $R$-module.
The group
$G_0(R)$ is generated by classes represented by \emph{maximal Cohen Macaulay} (MCM) $R$-modules, i.e. $R$-modules whose projective dimension over
$A$ is 1, so we may assume $M$ is MCM. Let $P_1 \xra{\alpha} P_0$ be the minimal $A$-free resolution of $M$; $\alpha$ was denoted by $\s_0$
in the more general setup above. 
Then there is a unique map $\b: P_1 \to P_0$ such that $\a \circ \b = f = \b \circ \a$; this is the map written as $\s_{e_1}$ above.
Therefore, by (\ref{chernci}), 
$$
ch([M]) = ch_{HKR}^{II}( P_0[T] \oplus \Sigma P_1[T], \d  ),
$$
where
$$
\delta := 
\begin{pmatrix}
0 & \a \\
\b T & 0 \\
\end{pmatrix}.
$$

We compute $ch_{HKR}^{II}( P_0[T] \oplus \Sigma P_1[T], \d  )$ using Theorem \ref{mainthm}. It is convenient to use
the canonical isomorphism
$$
\Omega^\bullet_{A[T]/k} \cong \Omega^\bullet_{A/k}[T] \oplus \Omega^\bullet_{A/k}[T] dT.
$$
Under this identification, the differential $ud - d(fT) = ud - df T - fdT$ on $\Omega^\bullet_{A[T]/k}$ becomes
$$
\begin{pmatrix}
ud_A - (df)T & 0 \\
\frac{\partial}{\partial T}dT-fdT & ud_A - (df)T \\
\end{pmatrix},
$$
where $d_A$ is the de Rham differential on $\Omega^\bullet_{A/k}$.

Choose bases of $P_0$ and $P_1$, so that we may represent $\a, \b$ as matrices. Let $\n$ denote the Levi-Civita connection on $P_0[T] \oplus \Sigma P_1[T]$ corresponding to this choice of basis. Then the associated curvature, regarded as a matrix with entries in $\Omega^\bullet_{A/k}[T] \oplus \Omega^\bullet_{A/k}[T] dT$, is
$$
R = [\n, \d] = 
\begin{pmatrix}
0 & d\a \\
T d\b & 0 \\
\end{pmatrix} +
\begin{pmatrix}
0 & 0 \\
\b   & 0 \\
\end{pmatrix} dT.
$$

For brevity, we set 
$$
R_1 = \begin{pmatrix}
0 & d\a \\
T d\b & 0 \\
\end{pmatrix} \and
R_2 = \begin{pmatrix}
0 & 0 \\
\b   & 0 \\
\end{pmatrix},
$$
so that 
$R = R_1 + R_2 dT$. For any $l$, we have
$$
R^l = R_1^l + \sum_{i=0}^{l-1} R_1^i R_2 dT R_1^{l-i-1}.
$$
By properties of the trace map,
$$
\tr(R^l) = \tr(R_1^l) + l \cdot \tr(R_1^{l-1} R_2) dT,
$$
which gives
$$
\tr(e^{-R}) = \tr(e^{-R_1}) - \tr(e^{-R_1} R_2) dT.
$$
Therefore, 
$$
ch([M]) = \sum_{j \ge 1}  \frac{ 2\tr((d\alpha \cdot d\beta)^j )}{(2j)!} T^j
+ 
\sum_{j \ge 0} \frac{ \tr \left( (d\a \cdot d\b)^j \cdot d\a \cdot \b\right) }{(2j+1)!} T^j dT,
$$
regarded as a cochain in the complex
$$
\left(\Omega^\bullet_{A/k}[T,u] \oplus \Omega^\bullet_{A/k}[T,u] dT, 
\begin{pmatrix} u d_A  - T df & 0 \\
\frac{\partial}{\partial T} dT - f dT &  u d_A - T df
\end{pmatrix}
\right).
$$
Observe that the computation in the previous example is recovered from this one by composing with the map to the $\Z/2$-graded complex
$(\Omega^\bullet_{A/k}[[u]], u d_A + df)$ given by setting $T = 1$ and $dT = 0$. 
\end{ex}

\begin{ex}
\label{nonflat}
As a final example, we compute the Chern character of a perfect curved module $\cP$ which does not admit a flat connection. Take $\G = \Z/2$, $k = \C$, and $\cA = (A, -h)$, where $A$ is the
$\G$-graded smooth $\C$-algebra $\C[x_1, \dots, x_5]/(x_1^2 + \cdots + x_5^2 - 1)$ concentrated in even degree, and $h = (1 - x_1^2)(x_2x_3 + x_4x_5)$. Recall that $\Perf(\cA^{\op})$
coincides with the dg category $\MF(A, h)$ of matrix factorizations of $h$. Let $P$ denote the image of the idempotent 

$$e: = \frac{1}{2}\begin{pmatrix} 
1-x_1 & -x_2 -ix_3 & x_4 + ix_5 & 0 \\
-x_2 +ix_3 & 1 + x_1 & 0 & x_4 + ix_5 \\
x_4 -ix_5 & 0 & 1 + x_1 & x_2 + ix_3 \\
0 & x_4 - ix_5 & x_2 - ix_3 & 1 - x_1 \\
 \end{pmatrix}$$
 of $A^{\oplus 4}$. A  direct
 calculation shows that the top Chern class of the ordinary projective $A$-module $P$, thought of as an element of $H_{dR}^4(A/k)$, is 
$$\frac{3}{2} \sum_{i = 1}^5 (-1)^{i+1} x_i dx_1 \cdots \widehat{dx_i} \cdots dx_5.$$
Stokes' Theorem implies that the integral of this Chern class, thought of as an element of $H_{dR}^4(S^4)$, is $\frac{15}{2} \on{vol}(B) \ne 0$, where $B$ denotes the unit ball in
$\mathbb{R}^5$. In particular, $P$ does not admit a flat connection.

Interpret the idempotent $e$ as a map $\pi: A^{\oplus 4} \to P$, and let $\iota : P \into A^{\oplus 4}$ denote the inclusion. Similarly, let $P'$ denote the image of $1 - e$, interpret $1 - e$ as a map $\pi' : A^{\oplus 4}  \to P'$, and let $\iota' : P' \into A^{\oplus 4}$ denote the inclusion. We now construct a matrix factorization of $h$ with underlying projective $A$-module $P \oplus \Sigma P'$.

Set $v:= \frac{1}{ 1- x_1}$, and consider the matrices
$$
S : = \begin{pmatrix}
1 & 0 & v(x_2 + ix_3) & v(-x_4 - ix_5)  \\
v(-x_2 + ix_3) & v(x_4 + ix_5)  & 1 & 0  \\
v(x_4 - ix_5) & v(x_2 + ix_3)  & 0 & 1 \\
0 & 1 & v(-x_4 +ix_5) & v(-x_2 + ix_3) \\
\end{pmatrix}
$$
$$
U:=\frac{1}{2}\begin{pmatrix}
1-x_1 & -x_2 - ix_3 & x_4 + ix_5 & 0\\
0 & x_4 - ix_5 & x_2 - ix_3 & 1 - x_1 \\
x_2 - ix_3 & 1-x_1 & 0 & -x_4 - ix_5 \\
-x_4 + ix_5 & 0 & 1-x_1 & -x_2 - ix_3 \\
\end{pmatrix}.
$$
Columns 1 and 2 (resp. 3 and 4) of $S$ give a basis for $P \otimes A[v]$ (resp. $P' \otimes A[v]$) over $A[v]$, and $U = S^{-1}$.

Set $T : = (1-x_1)S$. Then $T$ and $U$ are matrices over $A$, and of course we have
$$TU = (1-x_1)\id_{A^{\oplus 4}} = UT.$$
Let $C$ denote the matrix
$$
\begin{pmatrix}
0 & 0 & 1 & 0 \\
0 & 0 & 0 & 1 \\
1 & 0 & 0 & 0 \\
0 & 1 & 0 & 0 \\
\end{pmatrix},
$$
and set
$$
\a := \begin{pmatrix}
x_2 & -x_4 & 0 & 0 \\
x_5 & x_3 & 0 & 0 \\
0 & 0 & x_2 & -x_4 \\
0 & 0 & x_5 & x_3 \\
\end{pmatrix},
\text{ } \b := \begin{pmatrix}
x_3 & x_4 & 0 & 0 \\
-x_5 & x_2 & 0 & 0 \\
0 & 0 & x_3 & x_4 \\
0 & 0 & -x_5 & x_2 \\
\end{pmatrix}.
$$
Letting $\d:=\begin{pmatrix} 0 &  \pi T \b CU\iota'  \\ \pi' TC\a U\iota & 0 \end{pmatrix}$, one sees that $\cP := (P \oplus P'[1], \d)$ is a matrix factorization of $h = (1-x_1)^2(x_2x_3 + x_4x_5)$. 

We observe that there is nothing special about $x_2x_3 + x_4x_5$ in this construction, and the only thing special about the matrices $\a$ and $\b$ are that the top left and bottom right corners come from a $2 \times 2$ matrix factorization of $x_2x_3 + x_4x_5$. So we could have taken any $h' \in A$ and any $2 \times 2$ matrix factorization $(\a', \b')$ of $h'$ and obtained, in the same way, a matrix factorization of $(1- x_1)^2h'$ with components given by $P$ and $P'$.

Now, let's use Theorem \ref{mainthm} to compute $ch^{II}_{HKR}(\cP)$. Equip $A^{\oplus 4}$ with a Levi-Civita connection $D$ corresponding to the standard basis, and let
$$\n:= \begin{pmatrix} 
e D \iota & 0  \\
0 & e' D \iota' \\
\end{pmatrix}
$$
be the induced connection on $P \oplus P'[1]$. Theorem \ref{mainthm} tells us
$$
ch_{HKR}^{II}(\cP) = \tr(\exp(-R)) = \sum_{n, J \ge 0}^{\infty} (-1)^{J + n} \sum_{j_0 + \cdots + j_n = J}\frac{\tr(\n^{2j_0}\delta' \n^{2j_1}\delta'  \cdots \delta'  \n^{2j_n})}{(J + n)!}u^J,
$$
where $\d' := [\n, \d]$. Collecting the terms which are nonzero in $\Omega_{A/k}^{\bullet}[[u]]$, we get
$$\tr(\exp(-R)) = \frac{\tr((\d')^4)}{24} - \frac{\tr(\d' \n^2 \d')}{2}u +\frac{\tr(\n^4)}{2}u^2.$$
The first term is the image of $\ch_{HH}^{II}(\cP)$ under the HKR map, and the last term is the difference of the top Chern classes of the components $P$ and $P'$.
\end{ex}

\appendix
\label{appendixsection}

\section{Hochschild homology and cohomology via twisting cochains}
\label{appA}

Throughout, $k$ is an arbitrary field, 
$\Gamma = \Z$ or $\Z/2$, and ``graded'' means $\Gamma$-graded. Let $\cA = (A, d_A, h)$ be a cdga over $k$.
In this section, we use the notion of a ``twisting cochain" to construct a dga $(\Hom_k(T,A), d_\tau)$ whose cohomology is the Hochschild cohomology of $\cA$ 
(cf. \cite[Section 2.4]{PP}), 
and we exhibit the complex $\tHoch(\cA)$ defined in Section~\ref{MCunital} as a left dg-module over $(\Hom_k(T,A), d_\tau)$.
Our approach uses ideas of Negron in \cite{negron}. 

Define a graded $k$-vector space
$$
T = T(A)  = \bigoplus_{n \geq 0} (\S A)^{\otimes n},
$$
where $\S A$ is the shift of the graded vector space $A$, so $(\S A )^i = A^{i+1}$. For $a_1, \dots, a_n \in A$, let $[a_1| \cdots | a_n]:= sa_1 \otimes \cdots \otimes sa_n$, where $s: A \to \S A$ is the canonical map of degree $-1$.
Thus, a typical element of $T$ is a $k$-linear combination of
expressions of the form $[a_1| \cdots | a_n]$. Observe that $[a_1| \cdots | a_n] \in T$ has degree $|a_1| + \cdots + |a_n| - n$. As a matter of convention, for
$n=0$, the symbol $[]$ denotes the element $1$ in $k = (\S A)^{\otimes 0}$.

$T$ is a counital  coalgebra over $k$ under the ``splitting of tensors'' coproduct map 
$$
\Delta: T \to T \otimes_k T
$$
defined by
$$
\begin{aligned}
\Delta([a_1| \dots |a_n]) 
& = \sum_{i=0}^n [a_1 | \cdots | a_i] \otimes [a_{i+1} | \cdots | a_n] \\
& = [] \otimes [a_1| \dots |a_n] +
(\sum_{i=1}^{n-1} [a_1 | \cdots | a_i] \otimes [a_{i+1} | \cdots | a_n]) +
 [a_1| \dots |a_n] \otimes []. \\
\end{aligned}
$$

Define a {\em coderivation} on $T$ to be a degree one map $\d: T \to T$ such that 
$$
(1 \otimes \d + \d \otimes 1) \circ \Delta = \Delta \circ \d.
$$
Let $\on{CoDer(T)}$ denote the set of (degree one) coderivations on $T$. Then there is a bijection 
$$
\on{CoDer(T)} \xra{\cong} \Hom_k(T, \S A)^1
$$
given by $\d \mapsto \pi \circ \d$, where $\pi: T \onto \S A$ is the canonical projection.
The inverse is given by sending $g \in \Hom_k(T, \S A)^1$ to the coderivation
$$
[a_1 | \cdots | a_n] \mapsto \sum_{\substack{0 \le j \le n-s \\ s \ge 0}} (-1)^{(|a_1| + \cdots + |a_j| - j)}
[a_1 | \cdots | a_j | g_s([a_{j+1}| \cdots | a_{j + s}]) | \cdots | a_n],
$$
where $g_s$ is the restriction of $g$ to $(\S A)^{\otimes s}$; see \cite[page 41]{markl2012deformation}.

We define coderivations $\d^{(2)}_T$,  $\d^{(1)}_T$, and $\d^{(0)}_T$ on $T$ using the multiplication map $\mu$, differential $d_A$, and curvature $h$ of the cdga $\cA$.
In detail, for $i=0,1,2$, let $\d^{(i)}_T$ be the unique coderivation such that $\pi \circ \d^{(i)}_T$ vanishes on $(\S A)^{\otimes n}$ for all $n \ne i$ and such that
the restriction of $\pi \circ \d^{(i)}_T$ to $(\S A)^{\otimes i}$ is
\begin{itemize}
\item the map $[a | b] \mapsto  (-1)^{|a|-1} [ab]$ for $i = 2$;
\item the map $[a] \mapsto [-da]$ for $i = 1$; and
\item the map $(\S A)^{\otimes 0} \to \S A$ sending $[] \in (\S A)^{\otimes 0} $ to $[h]$ for $ i = 0$.
\end{itemize}

The sign for $\delta_T^{(2)}$ is explained by the fact that it is the  map $s \circ \mu \circ (s^{-1} \otimes s^{-1})$. Applying this to $[a|b] = sa \otimes sb$ introduces a sign of $(-1)^{|s^{-1}||sa|} = (-1)^{|a| -1}$, because $s^{-1}$ and $sa$ are interchanged.
The sign in the formula for $\d_T^{(1)}$ is justified since $[a] \mapsto -[da]$ is the standard differential on the suspension $\S A$ of $A$.

Explicitly, for any $n$ and $a_1, \dots, a_n \in A$, we have:
$$
\d^{(2)} _T([a_1 | \cdots | a_n])  =
\sum_{j=1}^{n-1}  (-1)^{|a_1| + \cdots + |a_j| - j}  [a_1 | \cdots |a_j a_{j+1}| \cdots | a_n],
$$
$$
\d^{(1)}_T([a_1 | \cdots | a_n]) 
 = \sum_{j=1}^n   (-1)^{|a_1| + \cdots + |a_{j-1}| -j}[a_1 | \cdots | d(a_j) | \cdots | a_n],
$$
and
$$
\d^{(0)}_T([a_1 | \cdots | a_n]) 
 = \sum_{j=0}^n (-1)^{|a_1| + \cdots +|a_j| -j}[a_1 | \cdots | a_{j} | h | a_{j+1} | \cdots | a_n].
$$

We endow $T$ with the coderivation $\d_T = \d_T^{(2)} + \d_T^{(1)} + \d_T^{(0)}$.

\begin{lem} $(T, \Delta, \d_T)$ is a dg-coalgebra; that is, $\d_T$ is a coderivation, and $\d_T^2 = 0$.
\end{lem}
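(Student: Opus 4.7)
That $\delta_T$ is a coderivation is essentially free: each $\delta_T^{(i)}$ was defined to be a coderivation (via the bijection between coderivations and their projections to $\Sigma A$), and coderivations of the same degree form a $k$-subspace of $\Hom_k(T,T)$, so the sum $\delta_T^{(2)} + \delta_T^{(1)} + \delta_T^{(0)}$ is again a coderivation. The content is $\delta_T^2 = 0$.

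My first step is to reduce this to a computation on $\Sigma A$. Observe that $T$ is the cofree counital coalgebra on $\Sigma A$, and consequently the projection $\pi: T \to \Sigma A$ induces a bijection between coderivations $D: T \to T$ of arbitrary degree and linear maps $T \to \Sigma A$ of the same degree (via $D \mapsto \pi \circ D$). I will therefore argue that $\delta_T^2$ is itself a coderivation, of degree $2$, and then verify that $\pi \circ \delta_T^2 = 0$. To see that $\delta_T^2$ is a coderivation, I compute
\[
\Delta \circ \delta_T^2 = (\delta_T \otimes 1 + 1 \otimes \delta_T)^2 \circ \Delta = (\delta_T^2 \otimes 1 + 1 \otimes \delta_T^2)\circ \Delta,
\]
where the cross terms cancel because the Koszul sign rule gives $(1 \otimes \delta_T)(\delta_T \otimes 1) = -\delta_T \otimes \delta_T$, compensating $(\delta_T \otimes 1)(1 \otimes \delta_T) = +\delta_T \otimes \delta_T$.

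The main step is then the computation $\pi \circ \delta_T^2 = 0$ on each $(\Sigma A)^{\otimes n}$. Since $\delta_T^{(i)}$ changes tensor length by $1-i$, the composite $\delta_T^{(i)} \circ \delta_T^{(j)}$ contributes to $\pi \circ \delta_T^2$ on $(\Sigma A)^{\otimes n}$ only when $i+j = n+1$, which restricts attention to $n \in \{0,1,2,3\}$. I will check these four cases directly:
\begin{itemize}
\item $n=0$: $\pi \circ \delta_T^{(1)}\delta_T^{(0)}([\,]) = -[dh] = 0$ by the cdga axiom $d(h)=0$.
\item $n=1$: the contributions $\pi \circ (\delta_T^{(1)})^2$ and $\pi \circ(\delta_T^{(2)}\delta_T^{(0)} + \delta_T^{(0)}\delta_T^{(2)})$ evaluate on $[a]$ to $[d^2 a]$ and $[ah - ha]$, respectively; these cancel by the cdga relation $d^2(a) = ha - ah$.
\item $n=2$: $\pi \circ (\delta_T^{(1)}\delta_T^{(2)} + \delta_T^{(2)}\delta_T^{(1)})([a|b])$ vanishes by the graded Leibniz rule $d(ab) = d(a)b + (-1)^{|a|}a\,d(b)$.
\item $n=3$: $\pi \circ (\delta_T^{(2)})^2([a|b|c])$ vanishes by associativity of the product on $A$; the two terms $(-1)^{|a|-1}(-1)^{|a|+|b|-1}[abc]$ and $(-1)^{|a|+|b|-2}(-1)^{|a|-1}[a(bc)]$ differ precisely by a sign.
\end{itemize}

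The only real obstacle is sign bookkeeping in the last three cases, which is why I chose sign conventions for $\delta_T^{(2)}$, $\delta_T^{(1)}$, $\delta_T^{(0)}$ so that they match the standard Koszul-sign formalism on the suspension $\Sigma A$ (as explained in the excerpt). Given those conventions, the four identities above are exactly the encodings of the cdga axioms $d(h)=0$, $d^2 = [h,-]$, Leibniz, and associativity, one per tensor degree.
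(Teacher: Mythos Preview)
Your proof is correct and follows essentially the same approach as the paper: both reduce $\delta_T^2 = 0$ to checking $\pi \circ \delta_T^2 = 0$ via the coderivation property, and both identify the resulting vanishing conditions with the four cdga axioms (associativity, Leibniz, $d^2 = [h,-]$, $d(h)=0$) together with the trivial length argument for $\delta_T^{(0)}\circ\delta_T^{(0)}$. The only difference is bookkeeping---you organize by tensor degree $n$ while the paper organizes by grouping the cross terms $\delta_T^{(i)}\delta_T^{(j)}$ into graded commutators---but the content is the same.
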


\begin{proof} $\d_T$ is a coderivation since each of $\d_T^{(2)}$, $\d_T^{(1)}$ and $\d_T^{(0)}$ are coderivations. The condition $\d_T  \circ \d_T = 0$ is equivalent to the defining relations for a curved dga. In detail, 
we have
$$
\d_T \circ \d_T  = \d^{(2)}_T \circ \d^{(2)}_T 
 + [\d^{(2)}_T , \d^{(1)}_T] 
 + [\d^{(2)}_T, \d^{(0)}_T]  +\d^{(1)}_T \circ \d^{(1)}_T  
 + [\d^{(1)}_T , \d^{(0)}_T] 
 + \d^{(0)}_T \circ \d^{(0)}_T, 
$$
and each of the six terms on the right is a coderivation of $T$. In general, a coderivation $\d'$ of
$T$ vanishes if and only if the $k$-linear map $\pi \circ \d' = 0$. 
We have
\begin{enumerate}
\item $\pi \circ \d^{(2)}_T \circ \d^{(2)}_T = 0$, since multiplication is associative;
\item $\pi \circ [\d^{(2)}_T ,\circ \d^{(1)}_T]   = 0$, since $d$ satisfies the Leibniz rule;
\item $\pi \circ \left( [\d^{(2)}_T ,\d^{(0)}_T] + \d^{(1)}_T \circ \d^{(1)}_T  \right) = 0$, since $d^2 = [h,-]$; 
\item $\pi \circ [\d^{(1)}_T ,\d^{(0)}_T]  = 0$, since $d(h) = 0$; and
\item $\pi \circ \d^{(0)}_T \circ \d^{(0)}_T  = 0$, since $\d^{(0)}_T$ maps $(\S A)^{\otimes n}$ to
  $(\S A)^{\otimes {n+1}}$ for all $n$. 
\end{enumerate}
\end{proof}

We now consider the graded $k$-vector space $\Hom_k(T,A)$. There is a canonical isomorphism 
$$
\Hom_k(T, A) \cong \prod_{n \geq 0} \Hom_k((\S A)^{\otimes n}, A),
$$
so we will write a typical element of $\Hom_k(T, A)$ 
as an infinite sum $f = \sum_n f_n$ for maps $f_n: (\S A)^{\otimes n} \to A$, $n \geq 0$. 
The maps $\d_T$ and $d_A$ endow $\Hom_k(T, A)$ with a differential given by
$$
d_{\Hom}(f) = d_A \circ f - (-1)^{|f|} f \circ \d_{T},
$$
making it into a chain complex.

Using that  $T$ is a coalgebra and $A$ is an algebra, $\Hom_k(T,A)$ 
admits a ``convolution'' product $\star$ defined as 
$$
f \star g = \mu \circ (f \otimes g) \circ \Delta.
$$
Explicitly, given 
$f_m: (\S A)^{\otimes m} \to A$ and
$g_n: (\S A)^{\otimes n} \to A$,  the map $f_m \star g_n: (\S A)^{\otimes (m + n)} \to A$  is given by
$$
(f_m \star g_n)([a_1 | \cdots |a_{m+n}]) = (-1)^{|g|(|a_1| + \cdots + |a_m| - m)} f([a_1| \cdots |a_m]) g([a_{m+1} | \cdots | a_{m+n}]).
$$
For $f = \sum_m f_m$ and $g = \sum_n g_n$ we have
$$
f \star g = \sum_{k} \sum_{m+n = k} f_m \star g_n.
$$
The following is an immediate consequence of the facts that $\Delta$ is co-associative, $\d_T$ is a coderivation for $\Delta$, $\mu$ is associative, and $d_A$ is
a derivation for $A$:

\begin{lem} 
\label{derivation}
The endomorphism $d_{\Hom}$ on  $\Hom_k(T, A)$ is a derivation relative to the convolution product.
\end{lem}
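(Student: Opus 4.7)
The plan is to prove this by direct unpacking of the definitions, exploiting the fact that $(T, \Delta, \delta_T)$ is a dg-coalgebra (established in the preceding lemma) and $(A, \mu, d_A)$ is a curved differential graded algebra, in particular with $d_A$ satisfying the Leibniz rule with respect to $\mu$. The signs are governed throughout by the Koszul convention, and the content of the lemma is the familiar principle that if $T$ is a dg coalgebra and $A$ is a dg algebra, then $\Hom_k(T,A)$ endowed with the convolution product and the induced differential is a dg algebra. Since $|d_{\Hom}|=1$, what needs to be shown is the identity
\[
d_{\Hom}(f \star g) = d_{\Hom}(f) \star g + (-1)^{|f|} f \star d_{\Hom}(g).
\]

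First I would write $d_{\Hom}(f\star g)$ out in two pieces coming from $d_A\circ(f\star g)$ and $-(-1)^{|f|+|g|}(f\star g)\circ \delta_T$, by substituting $f\star g = \mu\circ(f\otimes g)\circ \Delta$. For the first piece, I apply the Leibniz rule for $d_A$ at the level of maps, namely $d_A\circ\mu = \mu\circ(d_A\otimes \id + \id\otimes d_A)$, and then use the Koszul rule $(\phi\otimes\psi)\circ(\phi'\otimes\psi') = (-1)^{|\psi||\phi'|}(\phi\circ\phi')\otimes(\psi\circ\psi')$ to pass $\id\otimes d_A$ across $f\otimes g$, picking up the sign $(-1)^{|f|}$. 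This produces exactly $(d_A\circ f)\star g + (-1)^{|f|}\, f\star (d_A\circ g)$.

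Next I handle the second piece. Using that $\delta_T$ is a coderivation of degree $1$, i.e.\ $\Delta\circ\delta_T = (\delta_T\otimes\id + \id\otimes\delta_T)\circ\Delta$ in the Koszul convention, I push $\delta_T$ through $\Delta$ and then commute the resulting tensor factor past $f\otimes g$ using the same sign rule. This yields $(f\circ\delta_T)\star g + (-1)^{|f|}\, f\star (g\circ\delta_T)$, up to the overall sign $-(-1)^{|f|+|g|}$ from the definition of $d_{\Hom}$. Combining with the first piece and gathering terms according to the definition $d_{\Hom}(-) = d_A\circ(-)-(-1)^{|-|}(-)\circ\delta_T$ gives precisely $d_{\Hom}(f)\star g + (-1)^{|f|} f\star d_{\Hom}(g)$.

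There is no conceptual difficulty; the only thing that can go wrong is sign bookkeeping. The main care point is the Koszul interchange sign $(-1)^{|f|}$ when moving either $\id\otimes d_A$ or $\id\otimes\delta_T$ across $f\otimes g$, together with the sign $-(-1)^{|f|}$ built into $d_{\Hom}$ on the component $f\circ\delta_T$, which is exactly what is needed so that the $\delta_T$-term contributes $-(-1)^{|f|}(f\circ\delta_T)\star g$ to $d_{\Hom}(f)\star g$ and $(-1)^{|f|}(-(-1)^{|g|})f\star (g\circ\delta_T) = -(-1)^{|f|+|g|}f\star(g\circ\delta_T)$ to $(-1)^{|f|}f\star d_{\Hom}(g)$. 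No properties beyond associativity of $\mu$, coassociativity of $\Delta$, Leibniz for $d_A$, and the coderivation property of $\delta_T$ are used; in particular, the curved nature of $\cA$ (the presence of $h$ and the failure of $d_A^2 = 0$) plays no role here, because those features are already absorbed in the verification that $\delta_T^2 = 0$, which was the previous lemma.
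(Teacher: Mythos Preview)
Your proof is correct and follows exactly the approach the paper indicates: the paper states this lemma as an immediate consequence of $\Delta$ being coassociative, $\delta_T$ being a coderivation, $\mu$ being associative, and $d_A$ being a derivation, without giving further details, and your argument is a faithful unpacking of that claim via the Koszul sign rule. (In fact only the Leibniz rule for $d_A$ and the coderivation property of $\delta_T$ are used in the derivation identity itself; associativity and coassociativity are what make $\star$ associative, not what make $d_{\Hom}$ a derivation.)
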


In general, $(\Hom_k(T,A), d_{\Hom})$ is not a dga, because $d_{\Hom}$ does not square to $0$. Instead,
$$
d_{\Hom} \circ d_{\Hom} = [h \circ \e, -],
$$
where $\e: T \to k$ is the counit for $T$ and we interpret $h$ as a map $k \to A$. Since $d_{\Hom}(h \circ \e) = 0$, we have:

\begin{lem} $(\Hom_k(T,A), \star, d_{\Hom}, h \circ \e)$ is a cdga over $k$.
\end{lem}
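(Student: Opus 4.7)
The lemma asserts that the five axioms of a cdga hold for $(\Hom_k(T,A),\star,d_{\Hom},h\circ\e)$: (i) $\star$ defines a unital graded associative multiplication on $\Hom_k(T,A)$; (ii) $d_{\Hom}$ is a $k$-linear endomorphism of degree $1$; (iii) $d_{\Hom}$ satisfies the graded Leibniz rule with respect to $\star$; (iv) $d_{\Hom}^2(f)=(h\circ\e)\star f - f\star(h\circ\e)$; (v) $h\circ\e$ has degree $2$ and is $d_{\Hom}$-closed. Items (ii), (iii), and (v) have already been established: (ii) is built into the definition of $d_{\Hom}$, (iii) is Lemma~\ref{derivation}, and (v) is the remark immediately preceding the lemma. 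The work therefore concentrates on (i) and (iv).

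For (i), the plan is to invoke the standard fact that for any counital coassociative coalgebra $C$ and unital associative algebra $B$ over $k$, the convolution product turns $\Hom_k(C,B)$ into a unital graded associative $k$-algebra, with unit $\eta_B\circ\e_C$ (where $\eta_B:k\to B$ is the unit). Associativity of $\star$ follows by chasing the diagram built out of coassociativity of $\Delta$ and associativity of $\mu$; unitality follows from the counit axiom for $\Delta$. In our case, the unit of $(\Hom_k(T,A),\star)$ is the map $T\to A$ sending $[]$ to $1_A$ and $(\S A)^{\otimes n}$ to $0$ for $n\ge 1$; in particular, $h\circ\e$ is $h$ times the unit, and since $\e$ has degree $0$ and $|h|=2$, the element $h\circ\e$ indeed has degree $2$.

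For (iv), the key calculation is to expand
\[
d_{\Hom}^2(f)=d_A\circ\bigl(d_A\circ f-(-1)^{|f|}f\circ\d_T\bigr)-(-1)^{|f|+1}\bigl(d_A\circ f-(-1)^{|f|}f\circ\d_T\bigr)\circ\d_T.
\]
The two mixed terms $d_A\circ f\circ\d_T$ cancel by the signs, yielding $d_{\Hom}^2(f)=d_A^2\circ f+f\circ\d_T^2$. The previous lemma gives $\d_T^2=0$, and the cdga axiom $d_A^2(a)=ha-ah$ for $\cA$ gives $d_A^2\circ f=h\cdot f(-)-f(-)\cdot h$ pointwise. On the other hand, applying the explicit convolution formula to $(h\circ\e)\star f$ evaluated on $[a_1|\cdots|a_n]$, only the splitting with $m=0$ contributes (since $\e$ vanishes on $(\S A)^{\otimes i}$ for $i\ge 1$); the associated convolution sign is $(-1)^{|f|\cdot 0}=1$, and the value is $h\cdot f([a_1|\cdots|a_n])$. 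Similarly $f\star(h\circ\e)$ contributes only from $n=0$, with sign $(-1)^{2(|a_1|+\cdots+|a_n|-n)}=1$, yielding $f([a_1|\cdots|a_n])\cdot h$. Hence $[h\circ\e,f]_\star(x)=h\cdot f(x)-f(x)\cdot h$, matching $d_{\Hom}^2(f)$ exactly.

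The only genuine obstacle is the sign bookkeeping: verifying that the two mixed $d_A\circ f\circ\d_T$ terms cancel with the stated sign in $d_{\Hom}$, and that the graded signs in the convolution formula reduce to the unsigned expressions $hf(x)$ and $f(x)h$ (which here is automatic because $|h\circ\e|=2$ is even, so the graded commutator agrees with the ungraded one). Once these are confirmed, items (i)--(v) combine to give the cdga structure with no further content.
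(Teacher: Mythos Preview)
Your proposal is correct and follows exactly the approach of the paper, which treats the lemma as an immediate consequence of the preceding remarks (Lemma~\ref{derivation} for the Leibniz rule, and the displayed identity $d_{\Hom}\circ d_{\Hom}=[h\circ\e,-]$ together with $d_{\Hom}(h\circ\e)=0$); you have simply spelled out the verification of (i) and (iv) that the paper leaves implicit. One minor slip: your expansion of $d_{\Hom}^2(f)$ actually yields $d_A^2\circ f - f\circ\d_T^2$ rather than $d_A^2\circ f + f\circ\d_T^2$ (the sign on the last term is $(-1)^{2|f|+1}=-1$), but since $\d_T^2=0$ this is immaterial to the conclusion.
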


Let $\tau \in \Hom_k(T,A)$ be the degree $1$ element that is zero on $(\S A)^{\otimes n}$ for all $n \ne 1$ and is the map
$s^{-1}$ on $\S A$. 
Then $\tau$ is a  ``twisting cochain'' for the cdga $(\Hom_k(T,A), \star, d_{\Hom}, h \circ \e)$; that is, it satisfies the Maurer-Cartan equation:
$$
\tau \star \tau - d_{\Hom}(\tau) + h \circ \e = 0.
$$
As with any twisting cochain, we may use $\tau$ to deform the differential by setting
$$
d_\tau = d_{\Hom} - [\tau, -],
$$ 
where $[-, -]$ denotes the commutator with respect to the convolution product. Explicitly, 
$$
d_\tau(f) =d_A \circ f - (-1)^{|f|} f \circ \d_T - \tau \star f + (-1)^{|f|} f \star \tau.
$$

\begin{lem} \label{lem326}
$(\Hom_k(T,A), \star, d_\tau)$ is a dga over $k$. That is, $d_\tau \circ d_\tau = 0$, and $d_\tau$ satisfies the Leibniz rule relative to the
  convolution product $\star$
\end{lem}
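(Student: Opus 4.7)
The plan is to deduce both claims from the fact that $\tau$ is a degree-one element satisfying the Maurer-Cartan equation in the cdga $(\Hom_k(T,A), \star, d_{\Hom}, h\circ\e)$. First I would observe that $d_\tau = d_{\Hom} - [\tau, -]$ is a sum of two graded derivations of degree $1$ on the convolution algebra: the first by Lemma~\ref{derivation}, the second because the graded commutator with any fixed homogeneous element of an associative graded algebra is automatically a graded derivation (of the appropriate degree). Hence $d_\tau$ is a graded derivation of degree $1$, which gives the Leibniz rule for free.

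The substantive part is showing $d_\tau^2 = 0$. I would expand directly,
\[
d_\tau^2 \;=\; d_{\Hom}^2 \;-\; \bigl(d_{\Hom}\circ [\tau,-] + [\tau,-]\circ d_{\Hom}\bigr) \;+\; [\tau,-]^2,
\]
and handle the three summands separately. The first equals $[h\circ\e, -]$, as already observed just before the lemma. For the second, applying the derivation $d_{\Hom}$ to both $\tau\star f$ and $f\star\tau$ and using $|\tau|=1$, the terms involving $d_{\Hom}(f)$ cancel and one is left with
\[
d_{\Hom}\circ [\tau,-] + [\tau,-]\circ d_{\Hom} \;=\; [d_{\Hom}(\tau), -].
\]
For the third, the graded Jacobi identity applied to $[\tau,[\tau,-]]$ with $|\tau|$ odd yields $2[\tau,[\tau,-]] = [[\tau,\tau],-]$, and since $[\tau,\tau] = 2\,\tau\star\tau$ (again because $\tau$ is odd), this gives $[\tau,-]^2 = [\tau\star\tau,-]$.

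Combining the three identities collapses everything into a single inner derivation:
\[
d_\tau^2 \;=\; \bigl[\,h\circ\e - d_{\Hom}(\tau) + \tau\star\tau\,,\,-\,\bigr],
\]
and the bracketed element vanishes by the Maurer-Cartan equation $\tau\star\tau - d_{\Hom}(\tau) + h\circ\e = 0$ recorded just before the lemma. The only real obstacle is sign bookkeeping in the two identities above, both of which turn on the oddness of $\tau$; once those signs are tracked correctly, the proof of $d_\tau^2=0$ is a four-line algebraic manipulation in the graded Lie algebra underlying $(\Hom_k(T,A),\star)$.
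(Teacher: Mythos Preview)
Your proof is correct and follows essentially the same route as the paper: both note that $[\tau,-]$ is a graded derivation (giving the Leibniz rule for $d_\tau$) and then expand $d_\tau^2$ to the inner derivation $[h\circ\e - d_{\Hom}(\tau) + \tau\star\tau,\,-]$, which vanishes by the Maurer--Cartan equation. One minor remark: your Jacobi-identity derivation of $[\tau,-]^2 = [\tau\star\tau,-]$ implicitly divides by $2$, while the appendix allows arbitrary $k$; the identity holds in any characteristic by a two-line direct expansion, which is what the paper tacitly assumes.
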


\begin{proof} For any degree one element $\b$, the endomorphism $[\b, -]$ satisfies the Leibniz rule; hence, so does $d_\tau$. Using that $\tau$ is a twisting cochain, we get
$$
\begin{aligned}
d_\tau^2  & = d_{\Hom}^2 - d_{\Hom} \circ [\tau,-] -  [\tau,-] \circ d_{\Hom} + [\tau \star \tau,-] \\
& = [h \circ \e,-]  - [d_{\Hom}(\tau), -] + [\tau \star \tau, -] \\
& = [h \circ \e - d_{\Hom}(\tau) + \tau \star \tau, -] \\
& = 0. \qedhere \\ 
\end{aligned}
$$
\end{proof}

Let us write $\Hom_k^\tau(T,A)$ for the dga $(\Hom_k(T,A), \star, d_\tau)$.
Explicitly,  for $a_1, \dots, a_{n+1} \in A$ and $f = \sum_n f_n \in \Hom_k(T,A)$, we have
$$
\begin{aligned}
d_\tau(f)([a_1 | \cdots | a_{n+1}])
= & - (-1)^{|f|(|a_1|-1)} a_1 f_{n}([a_2| \cdots | a_{n+1}]) \\
& - \sum_{j=1}^{n} (-1)^{|f| + |a_1| + \cdots +|a_j|-j} f_{n}([a_1| \cdots |a_ja_{j+1}| \cdots |a_{n+1}]) \\
& + (-1)^{|f| + |a_1| + \cdots + |a_n| -n} f_n([a_1| \cdots | a_{n}]) a_{n+1} \\
&  - \sum_{j=1}^{n+1}   (-1)^{|f|+|a_1| + \cdots + |a_{j-1}| -j}f_{n+1}([a_1 | \cdots | d_A(a_j) | \cdots | a_{n+1}]) \\
& - \sum_{j=0}^{n+1} (-1)^{|f|+|a_1| + \cdots +|a_j| -j} f_{n+2}([a_1 | \cdots | a_{j} | h | a_{j+1} | \cdots | a_{n+1}]) \\
& + d_A(f_{n+1}([a_1| \cdots |a_{n+1}])).
\end{aligned}
$$
Thus, $\Hom^\tau_k(T,A)$ is the standard Hochschild cohomology complex of the cdga $\cA$ (cf. \cite[Section 2.4]{PP}).

We can also build the Hochschild homology complex of $\cA$ from $T$, $\d_T$ and $\tau$. 
We define both a left and a right action of the algebra $\Hom_k(T,A)$ on 
the graded $k$-vector space $A \otimes_k T$ as follows. As a matter of shorthand, given an element $t \in T$,
we write $t_i', t''_i$ for the various elements of $T$ appearing in $\Delta(t) = \sum_i t'_i \otimes t''_i$. In detail,
if $t = [a_1| \cdots |a_n]$, then $t'_i = [a_1| \cdots |a_i]$ and $t''_i = [a_{i+1}| \cdots |a_n]$, for $i = 0, \dots, n$. 
Given $f \in \Hom_k(T,A)$, $a \in A$, and $t \in T$, we define
\begin{equation}
\label{left}
f \cdot (a \otimes t) = \sum_i  (-1)^{|a||t| + |t_i'|(|t''_i| + |a|)}  f(t_i'')a \otimes t_i'
\end{equation}
and 
\begin{equation}
\label{right}
(a \otimes t) \cdot f = \sum_i  (-1)^{|f| |t|}  a f(t_i') \otimes t_i''.
\end{equation}

Equivalently, left multiplication by $f$ is the function
\begin{equation}
\label{leftabstract}
\s \circ (\id_T \otimes \mu) \circ (\id_T \otimes f \otimes \id_A) \circ (\Delta \otimes \id_A) \circ \s,
\end{equation}
where  $\s: A \otimes_k T \to T \otimes_k A$ is the switching map given by
$\s(a \otimes t) = (-1)^{|a||t|} t \otimes a$. 
Recall that giving $A \otimes_k T$ the structure of a right module over $\Hom_k(T,A)$ is equivalent
to giving it the structure of a left module over $\Hom_k(T,A)^\op$, and that the two are related by
$$
(a \otimes t) \cdot f = (-1)^{|f|(|a| + |t|)} f^\op \cdot (a \otimes t),
$$
where, by $f^\op$, we mean the element $f \in \Hom_k(T, A)$ regarded as an element of $\Hom_k(T,A)^\op$. 
With this in mind, we can see that right multiplication by $f \in \Hom_k(T,A)$ on $A \otimes_k T$ 
corresponds to the left action of $f^\op$ given by the function
\begin{equation}
\label{rightabstract}
(\mu \otimes \id_T) \circ (\id_A \otimes f^\op  \otimes \id_T) \circ (\id_A \otimes \Delta).
\end{equation}

We also equip $A \otimes_k T$  with the differential $d_{\otimes} = d_A \otimes \id + \id \otimes \d_T$.

\begin{lem} 
\label{bimodule}
The actions described in 
\eqref{left} and \eqref{right} give $(A \otimes_k T, d_\otimes)$ the structure of a bimodule over the cdga $(\Hom_k(T,A), \star, d_{\Hom}, h \circ \e)$. 
By definition, this means
$A \otimes_k T$ is a $\Hom_k(T,A)-\Hom_k(T,A)$ bimodule in the usual sense, $d_\otimes$ satisfies the Leibniz rule for both the left and right actions, and
$$
d_{\otimes} \circ d_{\otimes} = [h \circ \e, -]
$$
where $[h \circ \e,-]$ is the endomorphism of $A \otimes_k T$ determined by the two actions given by
$$
[h \circ \e, \o] = (h \circ \e) \cdot \o -  \o \cdot (h \circ \e).
$$
\end{lem}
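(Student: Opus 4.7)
The plan is to verify the three required properties in turn: (a) the left and right actions commute and are each associative (i.e. $A \otimes_k T$ is an honest $\Hom_k(T,A)$-bimodule), (b) $d_\otimes$ satisfies the graded Leibniz rule with respect to both actions, and (c) $d_\otimes \circ d_\otimes = [h \circ \e, -]$ as an endomorphism of $A \otimes_k T$.

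For (a), I would work with the abstract descriptions \eqref{leftabstract} and \eqref{rightabstract} rather than with explicit formulas. The key observation is that the left action consumes the \emph{right} tensor factor of $\Delta(t)$ and multiplies the resulting element of $A$ on the \emph{left}, whereas the right action consumes the \emph{left} tensor factor of $\Delta(t)$ and multiplies on the \emph{right}. Consequently, both $f \cdot (g \cdot \omega) = (f \star g) \cdot \omega$ and $(\omega \cdot f) \cdot g = \omega \cdot (f \star g)$ are immediate from the coassociativity of $\Delta$ and the associativity of $\mu$, while the bimodule compatibility $f \cdot (\omega \cdot g) = (f \cdot \omega) \cdot g$ is a consequence of the same coassociativity, since the two actions act on disjoint tensor factors of $(\Delta \otimes \id)(\Delta(t)) = (\id \otimes \Delta)(\Delta(t))$. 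The switching maps $\sigma$ that appear in \eqref{leftabstract} contribute Koszul signs that match on both sides.

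For (b), the Leibniz rules follow from combining Lemma~\ref{derivation} with the structural properties already established: $d_A$ is a derivation for $\mu$, and $\d_T$ is a coderivation for $\Delta$. More precisely, since $\sigma$, $\Delta \otimes \id_A$, $\id_T \otimes f \otimes \id_A$, and $\id_T \otimes \mu$ are each compatible with the appropriate differentials (up to the standard Koszul sign $(-1)^{|f|}$ when commuting $d_\otimes$ past $f$), the composite \eqref{leftabstract} yields $d_\otimes(f \cdot \omega) = d_{\Hom}(f) \cdot \omega + (-1)^{|f|} f \cdot d_\otimes(\omega)$, and similarly on the right.

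For (c), I would compute directly. Expanding $d_\otimes^2 = (d_A \otimes \id)^2 + (d_A \otimes \id)(\id \otimes \d_T) + (\id \otimes \d_T)(d_A \otimes \id) + (\id \otimes \d_T)^2$ and evaluating on $a \otimes t$, the cross terms cancel via the Koszul sign rule (the first contributes $(-1)^{|a|} d_A(a) \otimes \d_T(t)$ and the second contributes its negative), leaving $d_\otimes^2(a \otimes t) = d_A^2(a) \otimes t + a \otimes \d_T^2(t)$. Since $\d_T^2 = 0$ and the cdga axiom gives $d_A^2(a) = ha - ah$, one obtains $d_\otimes^2(a \otimes t) = (ha - ah) \otimes t$. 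A direct check from \eqref{left} and \eqref{right} shows that in the sum defining $(h \circ \e) \cdot (a \otimes t)$ only the $i = n$ term survives (yielding $ha \otimes t$), and in the sum for $(a \otimes t) \cdot (h \circ \e)$ only the $i = 0$ term survives (yielding $ah \otimes t$); their difference equals $d_\otimes^2(a \otimes t)$. Since $|h \circ \e| = 2$ is even, no extra sign is introduced in the bimodule bracket.

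The main obstacle is purely bookkeeping of Koszul signs — in particular, checking in step (b) that the switching maps in \eqref{leftabstract} contribute exactly the sign $(-1)^{|f|}$ needed for the Leibniz rule, and confirming in step (a) that the signs in \eqref{left} and \eqref{right} are those dictated by the abstract composites. Once the sign accounting is done, each claim reduces to coassociativity of $\Delta$, associativity of $\mu$, and the defining identities of the cdga $\cA$.
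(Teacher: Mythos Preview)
Your proposal is correct and follows essentially the same approach as the paper: the paper also reduces associativity and bimodule compatibility to a ``straightforward (but tedious) exercise'', derives the Leibniz rules from the abstract formulas \eqref{leftabstract}, \eqref{rightabstract} together with the fact that $d_A$ is a derivation and $\d_T$ a coderivation, and declares $d_\otimes^2 = [h\circ\e,-]$ to be clear. Your write-up is in fact more detailed than the paper's own proof, particularly in part (c), where you carry out the computation that the paper leaves implicit.
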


\begin{proof} 
The actions defined in \eqref{left} and \eqref{right} are obviously additive. It is a straightforward (but tedious) exercise to check the that they are associative, and also that the relation
$$
(f \cdot (a \otimes t)) \cdot g = f \cdot ((a \otimes t) \cdot g)
$$
holds. It is clear that $d_{\otimes} \circ d_{\otimes} = [h \circ \e, -]$, and the left and right Leibniz rules follow in a manner similar to the proof of Lemma~\ref{derivation}
outlined above; that is, one applies the abstract formulas \eqref{leftabstract} and \eqref{rightabstract} for the left and right actions, recalling that $d_A$ is a derivation and $\delta_T$ is a coderivation. 
\end{proof}

We now deform the differential on $(A \otimes_k T, d_\otimes)$ to make it a dg-module over the dga $\Hom^\tau_k(T,A)$.

\begin{lem} The endomorphism $d_\otimes - [\tau,-]$ of $A \otimes_kT$ makes it a left dg-module over the dga 
$\Hom^\tau_k(T,A)$.
\end{lem}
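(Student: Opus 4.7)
The plan is to verify the two conditions required for $(A \otimes_k T, d_\otimes - [\tau,-])$ to be a left dg-module over $\Hom^\tau_k(T,A)$: (i) the endomorphism $d_\otimes - [\tau,-]$ squares to zero, and (ii) it satisfies the Leibniz rule with respect to the left action and the deformed differential $d_\tau = d_{\Hom} - [\tau,-]$ on $\Hom^\tau_k(T,A)$. The argument closely parallels the proof of Lemma \ref{lem326}: it is a formal consequence of the bimodule structure of Lemma \ref{bimodule} together with the Maurer-Cartan identity $\tau \star \tau - d_{\Hom}(\tau) + h \circ \e = 0$.

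For (i), I would expand
$$(d_\otimes - [\tau,-])^2 = d_\otimes^2 - (d_\otimes \circ [\tau,-] + [\tau,-] \circ d_\otimes) + [\tau,-]^2.$$
The first term equals $[h\circ\e, -]$ by Lemma \ref{bimodule}. For the cross term, I would apply the left and right Leibniz rules for $d_\otimes$ (with respect to $d_{\Hom}$) to an expression of the form $\tau\cdot\omega - (-1)^{|\omega|}\omega\cdot\tau$ and collect terms to obtain $[d_{\Hom}(\tau),-]$. Because $\tau$ has odd degree, a standard short computation using the bimodule axioms shows $[\tau,-]^2 = [\tau \star \tau, -]$. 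Summing, I get $[h\circ\e - d_{\Hom}(\tau) + \tau\star\tau, -]$, which vanishes by the Maurer-Cartan equation.

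For (ii), I would compute both sides of
$$(d_\otimes - [\tau,-])(f \cdot \omega) = d_\tau(f)\cdot \omega + (-1)^{|f|} f \cdot (d_\otimes - [\tau,-])(\omega)$$
separately. The left-hand side expands, via the left Leibniz rule for $d_\otimes$, into $d_{\Hom}(f)\cdot\omega + (-1)^{|f|}f\cdot d_\otimes(\omega) - \tau\cdot(f\cdot\omega) + (-1)^{|f|+|\omega|}(f\cdot\omega)\cdot\tau$. On the right-hand side, I would expand $d_\tau(f) = d_{\Hom}(f) - [\tau,f]$ and $[\tau,\omega] = \tau\cdot\omega - (-1)^{|\omega|}\omega\cdot\tau$. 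Associativity of the left action gives $\tau\cdot(f\cdot\omega) = (\tau\star f)\cdot\omega$, and the bimodule axiom gives $(f\cdot\omega)\cdot\tau = f\cdot(\omega\cdot\tau)$; with these, the two middle terms $(-1)^{|f|}(f\star\tau)\cdot\omega$ appearing on the right-hand side cancel, and the identity reduces to a manifest equality.

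The only real obstacle is bookkeeping of Koszul signs — each instance of moving $\tau$ (of degree $1$) past an element of degree $|\omega|$ or $|f|$ must be tracked carefully — but there is no conceptual difficulty. This is the standard mechanism by which a Maurer-Cartan element in a cdga deforms a bimodule over the cdga into a dg-bimodule over the twisted dga, and here we only need the left module structure.
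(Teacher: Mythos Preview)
Your proposal is correct and is precisely the argument the paper has in mind: the paper's proof is the single sentence ``Proceed exactly as in the proof of Lemma~\ref{lem326}, applying Lemma~\ref{bimodule},'' and what you have written is exactly that, with the details filled in. Your computation that $[\tau,-]^2 = [\tau\star\tau,-]$ (from the odd degree of $\tau$ and the bimodule axioms) and your reduction of the cross term to $[d_{\Hom}(\tau),-]$ via the Leibniz rules of Lemma~\ref{bimodule} are the two substantive points, and both are handled correctly.
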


\begin{proof}
Proceed exactly as in the proof of Lemma~\ref{lem326}, applying Lemma~\ref{bimodule}.
\end{proof}

Let us write $A \otimes_k^\tau T$ for the complex $(A \otimes_k T, d_\otimes - [\tau,-])$.
Explicitly, the differential $d_\otimes - [\tau, -]$ sends an element $\a \in A \otimes_k T$ to
$$
 (d_A \otimes \id_T)(\a) +
(\id_A \otimes \d^{(2)}_T)(\a) +
(\id_A \otimes \d^{(1)}_T)(\a) +
(\id_A \otimes \d^{(0)}_T)(\a) - \tau \cdot \a + (-1)^{|\a|} \a \cdot \tau. 
$$
For $\a = a_0[a_1| \cdots|a_n]$ we have 
$$
\begin{aligned}
(d_A \otimes \id_T)(\a) & = d(a_0)[a_1 | \cdots | a_n], \\
(\id_A \otimes \d^{(2)}_T)(\a) & =  \sum_{j=1}^{n-1} (-1)^{|a_0| + |a_1| + \cdots |a_j| - j } a_0 [a_1 | \cdots |a_ja_{j+1} | \cdots | a_n], \\
(\id_A \otimes \d^{(1)}_T)(\a) & = 
\sum_{j=1}^n (-1)^{|a_0| + |a_1| + \cdots + |a_{j-1}| - j} a_0[a_1| \cdots | d(a_j) | \cdots | a_n], \\
(\id_A \otimes \d^{(0)}_T)(\a) & = 
\sum_{j=0}^n (-1)^{|a_0| + |a_1| + \cdots + |a_{j}| - j} a_0[a_1| \cdots | a_j|h | a_{j+1}|  \cdots | a_n]. \\
\end{aligned}
$$
In the formulas for the left and right actions of $\tau \in \Hom_k(T,A)$ on $A \otimes_k T$,  
the only terms that do not vanish are those when $i = n-1$ and $i = 1$, respectively, and so we have
$$
- \tau \cdot a_0[a_1| \cdots|a_n] = - (-1)^{(|a_0| + \cdots + |a_{n-1}| -n - 1)(|a_n|-1)}      a_n a_0 [a_1| \cdots |a_{n-1}],
$$
and
$$
\begin{aligned}
(-1)^{|\a|} a_0[a_1| \cdots|a_n] \cdot \tau 
& =  (-1)^{|a_0| + |a_1| + \cdots + |a_n| -n +|a_1| + \cdots + |a_n| -n} a_0a_1 [a_2| \cdots |a_{n}] \\
& =  (-1)^{|a_0|} a_0a_1 [a_2| \cdots |a_{n}].
\end{aligned}
$$

Putting these equations together gives that $d_\otimes - [\tau,-]$ 
coincides with the formula for the Hochschild differential $b = b_2 + b_1 + b_0$ for $A$ given in Section~\ref{MCunital}. That is,
we have an identity
$$
A \otimes_k^\tau T = (\tHoch(A), b) 
$$
of chain complexes.

In summary, the Hochschild homology of $A$ is $(\tHoch(A), b) = A \otimes_k^\tau T$, and it is a left dg-module over Hochschild cohomology of $A$,  which is
the dga $\Hom^\tau(T,A)$.

\section{Proof of Theorem \ref{existence}}
\label{appendixtechnical}
Assume $k$ is a field of characteristic $0$ and $\cC$ is a curved differential $\G$-graded category over $k$, where $\G = \Z$ or $\G = \Z/2$.
Define the graded $k$-vector space
$$
\cC/[\cC, \cC]= \bigoplus_X \End(X)/\sim,
$$
where $\sim$ is the equivalence relation given by modding out by commutators; that is, it is generated by
$\a \b \sim (-1)^{|\a||\b|} \b \a$ for all pairs of maps of the form $\a: X \to Y$ and  $\b: Y \to X$. 
The graded $k$-vector space $\cC/[\cC, \cC]$ is not an algebra, but it does enjoy the following ``(graded) cyclic invariance'' property: given composable morphisms
$$
X_m \xra{\a_m} X_{m-1} \xra{\a_m} \cdots  \xra{\a_1} X_{0} \xra{\a_0} X_{m} 
$$
we have
$$
\a_0 \circ \cdots \circ \a_m = (-1)^{|\a_m|(|\a_0| + \cdots + |\a_{m-1}|)} \a_m \circ \a_0 \circ \cdots \circ \a_{m-1} \pmod{[\cC, \cC]}.
$$

The differentials in $\cC$ induce
a map $\overline{d}$ on $\cC/[\cC, \cC]$ that squares to $0$, making $\cC/[\cC,\cC]$ into a complex of $k$-vector spaces. 
Let $v$ be a formal parameter of degree $-2$, and let  $\cC/[\cC, \cC] [[v]]$ be the complex of $k[[v]]$-modules
obtained from $\cC/[\cC,\cC]$ by extension of scalars along $k \to k[[v]]$. 
Define a $k[v]$-linear map of degree $0$ 
$$
\phi: \oHoch(\cC)[v] \to \cC/[\cC, \cC] [[v]]
$$
as follows:
for any integer $n \geq 0$, objects $X_0, \dots, X_n$ of $\cC$ and morphisms $a_i: X_{i+1} \to X_i$, where $X_{n+1} := X_0$, we set 
$$
\phi(a_0 [a_1 | \cdots |a_n]) = \sum_{\vj} \frac{a_0 h_1^{j_0} d(a_1) \cdots h_n^{j_{n-1}}d(a_n) h_0^{j_n}}{(J+n)!} v^{J+n}  
\pmod{[\cC, \cC]},
$$
where $h_0, \dots, h_n$ are the curvatures of $X_0, \dots, X_n$, $\vj = (j_0, \dots, j_n)$ ranges over all $(n+1)$-tuples of non-negative integers, and $J = j_0 + \cdots + j_n$. To ease notation in this section, we omit the bars over the $a_i$ when we write elements of the reduced Hochschild homology complex.

This map extends in the evident way to a map
$$
\oHoch^{II}(\cC)[[v]] \to \cC/[\cC, \cC] [[v]]
$$
which we also call $\phi$. In more detail, the first map is continuous for the topology on the source determined by the (reduced version of the) filtration
$F^j$
 defined in  \eqref{E721} and the usual topology for power series on the target, and hence it induces a map on completions.

\begin{prop} 
With the notation above, we have
$$ 
v \overline{d} \circ \phi =   - v \phi \circ b_2 +  \phi \circ B.
$$
\end{prop}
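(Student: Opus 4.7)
My plan is to prove the identity by direct computation on a generator $x = a_0[a_1|\cdots|a_n]$, where $a_i: X_{i+1} \to X_i$ (indices mod $n+1$) and $h_i$ denotes the curvature of $X_i$. Writing
\[
\Phi_{\vec{j}} := a_0 h_1^{j_0} d(a_1) h_2^{j_1} d(a_2) \cdots d(a_n) h_0^{j_n}, \qquad \phi(x) = \sum_{\vec{j}} \frac{\Phi_{\vec{j}}}{(J+n)!}\, v^{J+n} \pmod{[\cC, \cC]},
\]
I expand $v\overline{d}(\phi(x))$ using the Leibniz rule for $\overline{d}$ applied to $\Phi_{\vec{j}}$. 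The cdg relations $\overline{d}(h_i) = 0$ and $\overline{d}(d(a_i)) = d^2(a_i) = h_i a_i - a_i h_{i+1}$ from Definition~\ref{cdgdefn} imply that only differentiating $a_0$ or one of the $d(a_i)$ yields a nonzero contribution, producing (with Koszul signs from Leibniz) a \emph{leading} term $d(a_0)\,h_1^{j_0} d(a_1) \cdots d(a_n) h_0^{j_n}$, and for each $1 \le i \le n$ a pair of \emph{boundary} terms $S_i^+$, $S_i^-$ arising from $d(a_i) \mapsto h_i a_i$ and $d(a_i) \mapsto -a_i h_{i+1}$ respectively.

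To match the boundary terms with $-v\phi(b_2(x))$, I reindex the outer sum so that the extra power of $h_i$ returns to the ambient $h$-slot. After reindexing, $S_i^+$ becomes a sum (over $\vec{m}$ with $m_{i-1} \ge 1$) of the universal boundary expression $\Psi_i(\vec{m}) := a_0 h_1^{m_0} \cdots d(a_{i-1}) h_i^{m_{i-1}} a_i h_{i+1}^{m_i} d(a_{i+1}) \cdots$, while $-S_i^-$ becomes the analogous sum with $m_i \ge 1$. Their sum therefore collapses to $(\mathbf{1}_{m_i = 0} - \mathbf{1}_{m_{i-1} = 0}) \Psi_i(\vec{m})$: the $m_i = 0$ piece is of the form $\cdots a_i d(a_{i+1}) \cdots$, and the $m_{i-1} = 0$ piece is of the form $\cdots d(a_{i-1}) a_i \cdots$. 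On the other side, applying $\phi$ to the middle $b_2$-term $a_0[a_1|\cdots|a_j a_{j+1}|\cdots|a_n]$ from \eqref{b2formula} and Leibniz-expanding via $d(a_j a_{j+1}) = d(a_j) a_{j+1} + (-1)^{|a_j|} a_j d(a_{j+1})$ produces exactly the matching fragments: the $a_j d(a_{j+1})$ piece (at $j=i$) matches the $m_i = 0$ residue, and the $d(a_j) a_{j+1}$ piece (at $j = i-1$) matches the $m_{i-1} = 0$ residue. The extreme residues at $i = 1$ and at the wrap-around $i = n$ (together with cyclic invariance mod $[\cC, \cC]$) reproduce the leading $a_0 a_1$ and cyclic $a_n a_0$ terms of $b_2(x)$, and the denominator shift from $1/(J+n)!$ to $1/(J+n-1)!$ accounts exactly for the prefactor $v$ in $-v\phi(b_2(x))$.

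To match the leading term with $\phi(B(x))$, I observe that for each $0 \le \ell \le n$ the summand $1_{X_\ell}[\overline{a_\ell}|\cdots|\overline{a_{\ell-1}}]$ of $B(x)$ (with sign $\epsilon_\ell$ from \eqref{Bformula}) is mapped by $\phi$ to a power series whose generic term contains $n+2$ factors $h_*^{k_*}$ interspersed with $d(a_\ell), \ldots, d(a_{\ell-1})$. Cyclic invariance mod $[\cC, \cC]$ allows me to rotate so that $d(a_0)$ sits at the head, which merges the two $h_\ell$-powers at the endpoints of the cycle into a single power at a unique ``merging slot'' $s_\ell$. For each fixed multi-index $\vec{m}$ in the rotated form there are precisely $m_{s_\ell} + 1$ pre-rotation multi-indices contributing to it; summing over $\ell$ and using that $\ell \mapsto s_\ell$ is a bijection of $\{0, \ldots, n\}$ onto itself, the total multiplicity is $\sum_{s=0}^n (m_s + 1) = M + n + 1$. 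This collapses the denominator $1/(M+n+1)!$ to $1/(M+n)!$, producing exactly the leading contribution $\sum_{\vec{m}} d(a_0) h_1^{m_0} \cdots d(a_n) h_0^{m_n}/(M+n)! \cdot v^{M+n+1}$ from Step~1.

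The main obstacle is simultaneous sign bookkeeping. Three independent sign systems interact throughout: the Koszul signs from the Leibniz rule for $\overline{d}$; the explicit signs in \eqref{b2formula} and \eqref{Bformula} for $b_2$ and $B$; and the cyclic-invariance signs in $\cC/[\cC, \cC]$, which appear both in the extreme-residue step of the $b_2$ match and in the rotation step of the $B$ match. A single sign miscalculation cascades through every reindexing, so the argument requires a careful, systematic accounting of the parities of all homogeneous factors---this is routine but error-prone, and is the only substantive difficulty in an otherwise direct expansion.
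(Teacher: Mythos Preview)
Your proposal is correct and follows essentially the same approach as the paper's own proof: you split $\overline{d}\phi(x)$ into the leading term (from differentiating $a_0$) and the boundary terms (from $d^2(a_i) = h_i a_i - a_i h_{i+1}$), then separately match the boundary terms with $-v\phi(b_2(x))$ via the same reindexing/telescoping trick and the leading term with $\phi(B(x))$ via cyclic rotation and the multiplicity count $\sum_\ell (m_{s_\ell}+1) = M+n+1$. The paper organizes the telescoping step slightly differently (introducing auxiliary quantities $\gamma_i'$, $\gamma_i''$ rather than your indicator-function formulation), but the content is identical, and your acknowledgment that the only real difficulty is the sign bookkeeping matches the paper's explicit tracking of the signs $\sigma_i$ and $\tau_i$.
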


\begin{rem} The Proposition implies that $\phi \circ B$ is divisible by $v$; this is clear from the formula for $\phi$, since the only way to get a
  term not involving $v$ occurs when $n = 0$. 
\end{rem}

\begin{proof}[Proof of Proposition] 
We have
$$
\overline{d} (\phi(a_0[a_1| \cdots | a_n]))=
\beta(a_0[a_1| \cdots | a_n]) + \gamma(a_0[a_1| \cdots | a_n]),
$$
where
$$
\beta(a_0[a_1| \cdots | a_n]) = \sum_{j_0, \dots, j_n} \frac{d(a_0) h_1^{j_0} d(a_1) \cdots d(a_n) h_0^{j_n}}{(J+n)!} v^{J+n}, 
$$
and
$$
\begin{aligned}
\gamma(a_0[a_1| \cdots | a_n])  
& =  \sum_{j_0, \dots, j_n} \sum_{i=1}^n \s_i \frac{a_0 h_1^{j_0} d(a_1) \cdots d^2(a_i) \cdots d(a_n) h_0^{j_n}}{(J+n)!} v^{J+n} . \\
\end{aligned}
$$
Here, $\s_i \in \{\pm 1\}$ is given by the formula
$$
\s_i = \s_i(a_0, \dots, a_n) = (-1)^{|a_0| + |sa_1| + \cdots |sa_{i-1}|} =
(-1)^{|a_0| + |a_1| + \cdots |a_{i-1}| + i - 1}.
$$ 
Note that 
$\s_1 = (-1)^{|a_0|}$ and
$\s_{i+1} = - (-1)^{|a_i|} \cdot \s_i$ for $i = 1, \dots n-1$.

Our task is to prove
$$
v \g + v \b = - v \phi \circ b_2  + \phi \circ B.
$$
In fact, we show that
$$
\g = - \phi \circ b_2
\and
v \b = \phi \circ B.
$$

We first show $\g = - \phi \circ b_2$. Since $d^2(a_i) = h_{i}a_i - a_ih_{i + 1}$ we get
$$
\gamma(a_0[a_1| \cdots | a_n])  = \sum_{i=1}^n \s_i \gamma_i(a_0[a_1| \cdots | a_n]), 
$$ 
where
$$
\begin{aligned}
\gamma_i(a_0[a_1| \cdots | a_n]) = & \sum_{\vj} \frac{a_0 h_1^{j_0} d(a_1) \cdots h_{i}^{j_{i-1}+1} a_i h_{i + 1}^{j_{i}} \cdots d(a_n) h_0^{j_n}}{(J+n)!}  v^{J+n} \\
& - \sum_{\vj} \frac{a_0 h_1^{j_0} d(a_1) \cdots h_i^{j_{i-1}} a_i h_{i + 1}^{j_{i}+1} \cdots d(a_n) h_0^{j_n}}{(J+n)!} v^{J+n}\\
& = \sum_{\vj, j_{i} = 0} \frac{a_0 h_1^{j_0} d(a_1) \cdots h_i^{j_{i-1}+1} a_i d(a_{i+1}) \cdots d(a_n) h_0^{j_n}}{(J+n)!} v^{J+n}  \\
&  - \sum_{\vj, j_{i-1} = 0}  \frac{a_0 h_1^{j_0} d(a_1) \cdots d(a_{i-1}) a_i h_{i+1}^{j_{i}+1} \cdots d(a_n) h_0^{j_n}}{(J+n)!} v^{J+n}.\\
\end{aligned}
$$
Upon reindexing the first summation by $n$-tuples $\vl = (l_0, \dots, l_{n-1})$ with 
$$
l_0 = j_0, \dots, l_{i-1} = j_{i-1}+1, l_i = j_{i+1}, \dots, l_{n-1} = j_n,
$$
and similarly for the second summation, we obtain
$$
\begin{aligned}
\gamma_i(a_0[a_1| \cdots | a_n]) 
& = \sum_{\vl, l_{i-1} >0} \frac{a_0 h_1^{l_0} d(a_1) \cdots h_{i}^{l_{i-1}} a_i d(a_{i+1}) h_{i + 2}^{l_i} \cdots d(a_n) h_0^{l_{n-1}}}{(L-1+n)!} v^{L-1+n}  \\
&  - \sum_{\vl,  l_{i-1} > 0}  \frac{a_0 h_1^{l_0} d(a_1) \cdots d(a_{i-1}) a_i h_{i}^{l_{i-1}} \cdots d(a_n) h_{0}^{l_{n-1}}}{(L-1+n)!} v^{L-1+n},\\
\end{aligned}
$$
where $L = l_0 + \cdots + l_{n-1} = J + 1$. The terms added by omitting the restriction $l_{i-1} > 0$ in each summation cancel, and thus 
$$
\gamma_i(a_0[a_1| \cdots a_n]) = \gamma'_i(a_0 [a_1 | \cdots | a_n]) - \gamma''_i(a_0 [a_1 | \cdots | a_n]),
$$
where we define
\begin{equation} \label{E322}
\begin{aligned}
\gamma'_i(a_0 [a_1 | \cdots | a_n]) 
& := \sum_{\vl} \frac{a_0 h_1^{l_0} d(a_1) \cdots d(a_{i-1}) h_i^{l_{i-1}} a_i d(a_{i+1}) h_{i + 2}^{l_i} \cdots d(a_n) h_0^{l_{n-1}}}{(L-1+n)!} v^{L-1+n}  \\
 \gamma''_i(a_0 [a_1 | \cdots | a_n]) 
& := \sum_{\vl}  \frac{a_0 h_1^{l_0} d(a_1) \cdots d(a_{i-1}) a_i h_i^{l_{i-1}} d(a_{i+1})\cdots d(a_n) h_0^{l_{n-1}}}{(L-1+n)!} v^{L-1+n}.\\
\end{aligned}
\end{equation}
Note that
$$
\gamma'_n = \sum_{l_0, \dots, l_{n-1}} \frac{a_0 h_1^{l_0} d(a_1) \cdots d(a_{n-1})h_n^{l_{n-1}} a_n}{(L-1+n)!} v^{L-1+n} 
$$
and
$$
\gamma''_1 =  \sum_{l_0, \dots, l_{n-1}}  \frac{a_0 a_1 h_1^{l_0} d(a_2) \cdots d(a_{n})  h_0^{l_{n-1}} }{(L-1+n)!} v^{L-1+n}.
$$
Thanks to the cyclic invariance of $\cC/[\cC,\cC]$, the former can be rewritten as
$$
\gamma'_n = \sum_{l_0, \dots, l_{n-1}} (-1)^{|a_n|(|a_0| + |a_1|  + \cdots + |a_{n-1}| + n - 1)}
\frac{a_na_0 h_1^{l_0} d(a_1) \cdots d(a_{n-1})h_n^{l_{n-1}} }{(L-1+n)!} v^{L-1+n}. 
$$
In summary, we have
$$
\g = \sum_{i=1}^n \s_i \g_i = \sum_i \s_i (\g'_i - \g''_i),
$$
with $\g'_i$, $\g''_i$ defined in \eqref{E322}.

On the other hand, rewriting \eqref{b2formula} using  $\s_i$ gives 
\begin{equation} \label{E322b}
\begin{aligned}
\phi(b_2(a_0 [a_1| \dots| a_n])) 
&  =    \s_1 \phi( a_0a_1[a_2 | \cdots | a_n]) \\
& + \sum_{j=1}^{n-1}  \s_{j+1} \phi(a_0[a_1| \cdots | a_j a_{j+1}| \cdots | a_n]) \\
& - (-1)^{(|a_n| - 1)(|a_0| + \cdots + |a_{n-1}| -(n-1))} \phi(a_na_0[a_1| \cdots |a_{n-1}]). \\
\end{aligned}
\end{equation}
For $1 \leq i \leq n-1$, using $d(a_ia_{i+1}) = d(a_i) a_{i+1} + (-1)^{|a_i|} a_i d(a_{i+1})$ gives 
$$
\begin{aligned}
& \phi(a_0[a_1| \cdots | a_ia_{i+1} | \cdots | a_n]) \\
& =  \sum_{\vl} \frac{a_0 h_1^{l_0} d(a_1) \cdots h_i^{l_{i-1}} d(a_i a_{i+1}) h_{i + 2}^{l_i} \cdots d(a_n) h_0^{l_{n-1}}}{(L-1+n)!} v^{L-1+n}  \\ 
& =  \sum_{\vl} \frac{a_0 h_1^{l_0} d(a_1) \cdots h_i^{l_{i-1}} d(a_i) a_{i+1} h_{i + 2}^{l_i} \cdots d(a_n) h_0^{l_{n-1}}}{(L-1+n)!} v^{L-1+n}  \\
 &   +  (-1)^{|a_i|}  \sum_{\vl} \frac{a_0 h_1^{l_0} d(a_1) \cdots h_i^{l_{i-1}} a_i d(a_{i+1}) h_{i + 2}^{l_i} \cdots d(a_n) h_0^{l_{n-1}}}{(L-1+n)!} v^{L-1+n}  \\
& =  \g''_{i+1}  + (-1)^{|a_i|} \g_i' \\
\end{aligned}
$$
We also have
$$
\s_1 \phi( a_0a_1[a_2 | \cdots | a_n]) =
\s_1 \sum_{\vl}  \frac{a_0a_1 h_1^{l_0} d(a_2) \cdots d(a_n) h_0^{l_{n-1}}}{(L +n -1)!} v^{L+n-1} 
= \s_1 \gamma''_1,
$$
and
$$
\begin{aligned}
- & (-1)^{(|a_n| - 1)(|a_0| + \cdots + |a_{n-1}| -(n-1))}  \phi(a_na_0[a_1| \cdots |a_{n-1}]) \\
& = - (-1)^{|a_n| (|a_0| + \cdots + |a_{n-1}| -(n-1))} \s_n \sum_{\vl} \frac{a_na_0h_1^{j_0} d(a_1)  \cdots d(a_{n-1}) h_n^{j_{n-1}}}{(L+n-1)!} v^{L+n-1}  \\
& =- \s_n\gamma_n' .\\
\end{aligned}
$$
Substituting these equations into \eqref{E322b} and using 
$\s_{i+1} = - (-1)^{|a_i|} \cdot \s_i$ yields
$$
\begin{aligned}
\phi(b_2(a_0 [a_1| \dots| a_n])) 
& =  \s_1 \gamma''_1  + \sum_{i=1}^{n-1}  (\s_{i+1} \g''_{i+1}  - \s_{i} \g_i') - \s_n \gamma'_n \\
& = - \sum_{i=1}^n \s_i (\g'_i -  \g''_i) \\
& = - \gamma(a_0[a_1| \cdots |a_n]). \\
\end{aligned}
$$
This proves $\gamma = - \phi \circ b_2$.

We now prove  $v \b = \phi \circ B$. We have
$$
\begin{aligned}
& \phi(B(a_0[a_1| \cdots |a_n]) \\
& = 
\sum_{i=0}^n \tau_i \sum_{\vj} \frac{h_{i}^{j_0} d(a_i) h_{i + 1}^{j_1} d(a_{i+1}) \cdots h_{n}^{j_{n-i}} d(a_n) 
h_0^{j_{n-i+1}} d(a_0) h_1^{j_{n-i+2}} 
\cdots h_{i-1}^{j_n} d(a_{i-1}) h_i^{j_{n+1}})}{(J+n+1)!} v^{J+n+1} , \\
\end{aligned}
$$
where the signs $\tau_i$ are given by 
$$
\tau_i = 
(-1)^{(|sa_i| + \cdots +|sa_n|) (|sa_0| + \cdots +|sa_{i-1}|)   }
= (-1)^{(|a_i| + \cdots +|a_n| - n+i-1) (|a_0| + \cdots +|a_{i-1}| - i)   }.
$$
The cyclic invariance of $\cC/[\cC,\cC]$ gives
$$
\begin{aligned}
\phi(B(a_0[a_1| \cdots |a_n]) 
& =  \sum_{i=0}^n  \tau_i 
\sum_{\vm} \frac{h_i^{m_0} d(a_i) h_{i + 1}^{m_1} \cdots d(a_n) h_0^{m_{n-i+1}} d(a_0)h_1^{m_{n-i+2}} \cdots d(a_{i-1})h_{i}^{m_{n+1}}}{(M+n+1)!} v^{M+n+1} \\
& =  \sum_{i=0}^n  \sum_{\vm} \frac{d(a_0) h_1^{m_{n-i+2}} d(a_{1}) \cdots   d(a_{i-1})    h_{i }^{m_{n+1} + m_0} d(a_i) \cdots             
h_n^{m_{n-i}} d(a_{n}) h^{m_{n-i+1}}      } {(M+n+1)!} v^{M+n+1}, \\
\end{aligned}
$$
where $\vm = (m_0, \dots, m_{n+1})$ ranges over all $(n+2)$-tuples of non-negative integers, and $M := m_0 + \cdots + m_{n+1}$. 
In the inner summation indexed by $i$, make the substitutions 
$$
j_0 = m_{n-i+2} \text{, } j_1 = m_{n-i+3}  \text{, } \dots  \text{, } j_{i-2} = m_{n}  \text{, }
j_{i-1} = m_{n+1} + m_0  \text{, }
j_i = m_1  \text{, } \dots  \text{, } j_n = m_{n-i+1}
$$
to get
$$
\begin{aligned}
\phi(B(a_0[a_1| \cdots |a_n]) 
& =  \sum_{i=0}^n  \sum_{\vj} (j_{i-1} + 1)\frac{ d(a_0) h_1^{j_0} d(a_{1}) \cdots  h_i^{j_{i-1}} d(a_i) \cdots d(a_n) h_0^{j_{n}} }{(J+n+1)!} v^{J+n+1}
\\
& =  \sum_{\vj} (j_{i-1} + 1)  \sum_{i=0}^n  \frac{ d(a_0) h_1^{j_0} d(a_{1}) \cdots  h_i^{j_{i-1}} d(a_i) \cdots d(a_n) h_0^{j_{n}} }{(J+n+1)!} v^{J+n+1}
\\
& = \sum_{\vj}  (J + n + 1)   \frac{ d(a_0) h_1^{j_0} d(a_{1}) \cdots  h_i^{j_{i-1}} d(a_i) \cdots d(a_n) h_0^{j_{n}} }{(J+n+1)!} v^{J+n+1} \\
& =  \sum_{\vj}  \frac{ d(a_0) h_1^{j_0} d(a_{1}) \cdots  h_i^{j_{i-1}} d(a_i) \cdots d(a_n) h_0^{j_{n}} }{(J+n)!} v^{J+n+1} \\
& = v \b (a_0[a_1| \cdots |a_n])   \qedhere \\
\end{aligned} 
$$
\end{proof}

\begin{cor} \label{cor321}
With the notation of Theorem \ref{existence}, we have
$$
\tr_\n \circ (b_2 + uB) = u d \circ \tr_\n.
$$
\end{cor}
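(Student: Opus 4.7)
My plan is to deduce Corollary~\ref{cor321} from the preceding Proposition by applying it to an auxiliary cdg category built from $\cD$ and the family of connections $\n$. First I would define a cdg category $\cC_1$ whose objects are those of $\cD$, with morphism spaces
\[
\Hom_{\cC_1}(P, Q) := \Hom_A(P, Q) \otimes_A \O^\bullet_{A/k}[[u]],
\]
differential $d_{\cC_1} := u[\n, -]$, and curvature $h_{\cC_1, P} := u^2 \n_P^2$ at each object $P$. The degrees are right: $u$ has degree $2$ and $[\n, -]$ has degree $-1$, so $u[\n, -]$ has degree $+1$; likewise $u^2 \n_P^2$ has degree $+2$. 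The cdg axioms $d_{\cC_1}^2 = [h_{\cC_1}, -]$ and $d_{\cC_1}(h_{\cC_1}) = 0$ reduce to $[\n,[\n,-]] = [\n^2,-]$ and $[\n, \n^2] = 0$, respectively.

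Next I would apply the Proposition to $\cC_1$ to obtain
\[
v \bar d_{\cC_1} \circ \phi_{\cC_1} = -v \phi_{\cC_1} \circ b_2 + \phi_{\cC_1} \circ B
\]
in $\cC_1/[\cC_1, \cC_1][[v]]$. A direct calculation yields
\[
\phi_{\cC_1}(\alpha_0[\alpha_1|\cdots|\alpha_n]) = \sum_{\vec j} \frac{u^{2J+n}}{(J+n)!}\, \alpha_0 \n_1^{2j_0} \alpha_1' \cdots \alpha_n' \n_0^{2j_n} \cdot v^{J+n}.
\]
I would then compose with the trace $\tr : \cC_1/[\cC_1, \cC_1] \to \O^\bullet_{A/k}[[u]]$, use the $k[[u]]$-linear extension of Lemma~\ref{lem326b} to replace $\tr \circ \bar d_{\cC_1}$ by $ud \circ \tr$, multiply through by $u$, and introduce the formal variable $w := uv$ of degree $0$ to obtain the cleaner identity
\[
wu \cdot d(\tr \phi_{\cC_1}) = -w \cdot (\tr \phi_{\cC_1}) \circ b_2 + u \cdot (\tr \phi_{\cC_1}) \circ B
\]
as maps into $\O^\bullet_{A/k}[[u,w]]$.

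Finally I would evaluate at $w = -1$. Since
\[
(\tr \circ \phi_{\cC_1})(\alpha_0[\alpha_1|\cdots|\alpha_n]) = \sum_{\vec j} \frac{u^J w^{J+n}}{(J+n)!}\, \tr(\alpha_0 \n_1^{2j_0} \alpha_1' \cdots \n_0^{2j_n}),
\]
this substitution yields $(\tr \circ \phi_{\cC_1})|_{w=-1}(x) = (-1)^n \tr_\n(x)$ for $x$ an $n$-chain. Substituting into the identity above and applying it to an $n$-chain $x$, both sides acquire a common sign $(-1)^{n+1}$: on the left from the $(-1)^n$ and the minus sign, on the right from the Hochschild-degree shifts of $b_2$ and $B$ (which change $n$ by $\mp 1$, turning $(-1)^n$ into $(-1)^{n \mp 1} = -(-1)^n$). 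Cancelling this common sign gives exactly $ud \circ \tr_\n = \tr_\n \circ b_2 + u \tr_\n \circ B$, which is Corollary~\ref{cor321}. The main subtlety is justifying the substitution $w = -1$: for any fixed Hochschild chain of length $n$, each coefficient of $u^J$ in $\tr \circ \phi_{\cC_1}$ contains exactly one power of $w$, namely $w^{J+n}$, so the evaluation is a well-defined homomorphism on the relevant images, and the rewritten identity is polynomial (not Laurent) in $u$ so no inversion of $u$ is ever needed.
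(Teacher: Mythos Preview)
Your proof is correct and follows essentially the same strategy as the paper: build the auxiliary cdg category with differential $u[\n,-]$ and curvature $u^2\n^2$, apply the Proposition together with Lemma~\ref{lem326b}, and finish via the parity of the index shift under $b_2$ and $B$. The only cosmetic difference is that you package the specialization as $w=uv\mapsto -1$, whereas the paper sets $v\mapsto -u^{-1}$ after observing (via $\Omega^i_{A/k}=0$ for $i>\dim A$) that the image is actually polynomial in $u,v$; the paper also makes explicit the functor $F\colon \cD^\nat\to\cC^\nat$ and the fact that $F_*$ commutes with $b_2$ and $B$, which you leave implicit.
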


\begin{proof} Recall that $\cA = (A,0)$ is an essentially smooth curved $k$-algebra, and $\cD$ is a full cdg subcategory of $\qPerf(\cA)$ consisting of objects with trivial differentials.
From $\cD$ we form a new cdg category, $\cC$, that has the same objects as $\cD$ but with morphism spaces given by
$$
\Hom_{\cC}(P, P') := \Hom_A(P, P') \otimes_A \Omega^\bu_{A/k}[[u]],
$$
where $u$ has degree $2$. We equip these with the $k[[u]]$-linear differential $\delta$ given by $u [\n, -]$; that is,
$$
\delta(\a) =  u \n_{P'} \circ \a - (-1)^{|\a|} u \a \circ \n_P.
$$
For each $P$, set the curvature of $P$ in $\cC$ to be $u^2 \n_P^2$. These data make $\cC$ into a $k[[u]]$-linear curved differential $\G$-graded category.

Let $\cD^\nat$ and $\cC^\nat$ denote the $k$-linear categories underlying $\cD$ and $\cC$, obtained by forgetting differentials and curvatures.
There is an evident functor
$$
F: \cD^\nat \to \cC^\nat
$$
which induces a map
$$
F_*: \oHoch^{II}(\cD^\nat) \to \oHoch^{II}(\cC^\nat)
$$
that commutes with $b_2$ and $B$. Moreover, the collection of trace maps
$$
\End_A(P) \otimes_A \Omega^\bu_{A/k}[[u]] \to  \Omega^\bu_{A/k}[[u]] 
$$
induce a map 
$$
\pi: \cC/[\cC, \cC] [[v]] \to \Omega^\bu_{A/k}[[u, v]].
$$
Recall that, by Lemma \ref{lem326b},
$$
\pi \circ \overline{\delta} = u d \circ \pi,
$$ 
where $d$ is the de Rham differential. 

Thus, we get an induced map
$$
\pi \circ \phi \circ F_*: \oHoch^{II}(\cD^\nat) \to \Omega^{\bu}_{A/k}[[u, v]]
$$
given by
$$
\begin{aligned}
a_0[a_1|\cdots |a_n] & \mapsto   \sum_{\vj}  
\frac{\pi(a_0 (u^2\n^2)^{j_0}   u a'_1 (u^2 \n^2)^{j_1} \cdots  u a'_n (u^2 \n^2)^{j_n})}{(J+n)!}  v^{J+n} \\
& = \sum_{\vj}  \frac{\pi(a_0 \n^{2j_0}   a'_1 \n^{2j_1} \cdots   a'_n \n^{2j_n})}{(J+n)!}  u^{2J+n} v^{J+n}, \\
\end{aligned}
$$
where the $a_i'$ are as defined in Definition \ref{tracedefinition}. Notice that the coefficient of $u^Nv^M$ in $(\pi \circ \phi \circ F_* )(a_0[a_1 | \cdots | a_n])$ is  $0$ when $N$ or $M$ is greater than $\dim(A)$, and so $\pi \circ \phi \circ F_*$ takes values in $\Omega^{\bu}_{A/k}[u,v]$. Composing $\pi \circ \phi \circ F_*$ with the composition
$$
l: \Omega^{\bu}_{A/k}[u, v] \to \Omega_{A/k}^{\bu}[u, u^{-1}] \into \Omega^{\bu}_{A/k}[[u, u^{-1}]],
$$
where the first map sends $v$ to $-u^{-1}$ and the second is the inclusion, we obtain a map 
$$
\widetilde{\tr}_\n: \oHoch^{II}(\cD) \to \Omega^{\bu}_{A/k}[[u, u^{-1}]]
$$
given by the formula
$$
\widetilde{\tr}_\n(a_0[a_1| \cdots|a_n]) = \sum_{\vj}  (-1)^{J+n} \frac{\pi(a_0 \n^{2j_0}   a'_1 \n^{2j_1} \cdots   a'_n \n^{2j_n} )}{(J+n)!}  u^{J}. 
$$
In other words, $\widetilde{\tr}_\n$ differs from the map $\tr_\n$ defined in \eqref{trdef} by a sign of $(-1)^n$. By the Proposition and the calculations above, we have
\begin{align*}
u d \widetilde{\tr}_\n  &= u d l \pi \phi F_* \\
&= l u d  \pi \phi F_*  \\
&=l \pi \overline{\delta} \phi F_* \\
&= ul \pi v \overline{\delta} \phi F_* \\
 &= ul\pi(-v \phi b_2 + \phi B)F_* \\
 &= ul\pi(-v \phi F_* b_2 + \phi F_* B) \\
 &= - \widetilde{\tr}_\n (b_2 + u  B).
\end{align*}
Since $b_2$ and $u B$ each increase the index $n$ by one, it follows that
$$
\tr_\n \circ (b_2 + uB) = u d \circ \tr_\n.   \qedhere
$$
\end{proof}

For a cdg category $\cC$, a {\em central element of degree $2$} is a collection $z$ of degree $2$ elements
$$
z_X \in \End_{\cC}(X)
$$
ranging over all $X \in \cC$ such that, 
for all morphisms $\a: X \to Y$, we have $\a z_X = z_Y \a$  
and $\a d(z_X) = (-1)^{|\a|} d(z_Y) \a$. 
Given such a collection $z$, we define a degree $1$ endomorphism
$b_0^z$ of $\Hoch^{II}(\cC)$ by the formula
$$
b^z_0(a_0 [a_1| \dots| a_n]) 
 =   \sum_{j=0}^n (-1)^{|a_0| +|a_1| + \cdots + |a_{j}| - j } a_0 [a_1| \cdots |a_{j}| z_{j+1} |a_{j+1}| \cdots | a_n],
$$
where if $\a_{j+1}: X_{j+2} \to X_{j+1}$ and $\a_{j}: X_{j+1} \to X_j$, then $z_{j+1} = z_{X_{j+1}}$.

Write $d(z)$ for the collection of degree $3$ elements $d(z_X) \in \End_\cC(X)$, $X \in \cC$. 
Left multiplication by $d(z)$ determines a degree $3$ endomorphism of
$\bigoplus_X \End_\cC(X)$. Given maps $\a: X \to Y$ and $\b: Y \to X$, we have 
$$
\begin{aligned}
d(z_X) \cdot [\alpha, \beta] 
& = d(z_Y) \a \b - (-1)^{|\a||\b|} d(z_X) \b \a  \\
& = d(z_Y) \a \b - (-1)^{|\a||\b| + |\b|} \b d(z_Y)  \a   \\
& = d(z_Y) \a \b - (-1)^{(|d(z_Y)| + |\a|)|\b|} \b d(z_Y)  \a   \\
& = [d(z_Y) \a, \b]. \\
\end{aligned}
$$
In particular, left multiplication by $d(z)$ descends to a degree 3 endomorphism of $\cC/[\cC,\cC]$ which we shall write as $\l_{d(z)}$.

\begin{prop} For any central curvature element $z$ of $\cC$, we have
$$
\phi \circ b_0^z = v \l_{d(z)} \circ \phi
$$
\end{prop}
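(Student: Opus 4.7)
The plan is to mirror the combinatorial strategy of the preceding proposition (comparing $v\overline{d}\circ\phi$ with $-v\phi\circ b_2 + \phi\circ B$), but the calculation is simpler here because no internal differentiation of the $a_i$'s takes place. First I would apply $\phi$ termwise to $b_0^z(a_0[a_1|\cdots|a_n])$. Inserting $z_{j+1}$ between $a_j$ and $a_{j+1}$ produces a chain of length $n+1$ whose intermediate object is $X_{j+1}$ on both sides of the inserted factor; consequently, expanding $\phi$ yields an $(n+2)$-tuple multi-index $(k_0,\dots,k_{n+1})$ of curvature exponents in which two consecutive exponents, $k_j$ and $k_{j+1}$, are attached to the \emph{same} curvature $h_{j+1}$, and are separated only by the factor $d(z_{j+1})$.

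Next I would invoke centrality of $z$ in two steps. Step one: since $|h|=2$, the sign rule $\alpha\, d(z_X)=(-1)^{|\alpha|}d(z_Y)\alpha$ says that $h_{j+1}$ commutes with $d(z_{j+1})$, so $h_{j+1}^{k_j}\,d(z_{j+1})\,h_{j+1}^{k_{j+1}}=d(z_{j+1})\,h_{j+1}^{k_j+k_{j+1}}$. Step two: move $d(z_{j+1})$ all the way to the left past $a_0, d(a_1),\dots,d(a_j)$ (the curvature factors contribute no sign); each $d(a_i)$ contributes a sign $(-1)^{|a_i|+1}$, and $a_0$ contributes $(-1)^{|a_0|}$, while the subscript on $d(z)$ tracks down from $j{+}1$ to $0$, leaving $d(z_0)$ on the far left. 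The total accumulated sign is $(-1)^{|a_0|+|a_1|+\cdots+|a_j|+j}$, which multiplied by the sign $(-1)^{|a_0|+\cdots+|a_j|-j}$ appearing in the definition of $b_0^z$ gives $(-1)^{2(|a_0|+\cdots+|a_j|)}=1$. Thus every contribution from $\phi\circ b_0^z$ has a \emph{positive} coefficient and contains an explicit $d(z_0)$ multiplied on the left.

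At this stage the $j$-th summand reads
\[
\sum_{(k_0,\dots,k_{n+1})} \frac{d(z_0)\cdot a_0 h_1^{k_0}d(a_1)\cdots d(a_j)\,h_{j+1}^{k_j+k_{j+1}}\,d(a_{j+1})\cdots h_0^{k_{n+1}}}{(K+n+1)!}\,v^{K+n+1}
\]
modulo $[\cC,\cC]$, where $K=\sum k_i$. I would reindex by setting $l_i=k_i$ for $i<j$, $l_j=k_j+k_{j+1}$, $l_i=k_{i+1}$ for $i>j$; the fiber of this reindexing over a fixed $\vl=(l_0,\dots,l_n)$ has exactly $l_j+1$ elements. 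Summing over $j$ and using the telescoping identity $\sum_{j=0}^{n}(l_j+1)=L+n+1$ with $L=\sum l_i$, the factor $(L+n+1)/(L+n+1)!=1/(L+n)!$ emerges, yielding
\[
\sum_{\vl}\frac{d(z_0)\cdot a_0 h_1^{l_0}d(a_1)\cdots h_0^{l_n}}{(L+n)!}\,v^{L+n+1} \;=\; v\,\lambda_{d(z)}\bigl(\phi(a_0[a_1|\cdots|a_n])\bigr),
\]
which is exactly $v\,\lambda_{d(z)}\circ\phi$. The step most likely to be error-prone is the sign bookkeeping for sliding $d(z_{j+1})$ past the factors $a_0, d(a_1),\dots,d(a_j)$; once that cancellation against the sign in $b_0^z$ is confirmed, the rest is the reindexing identity $\sum(l_j+1)=L+n+1$, which causes the single application of $d(z_0)$ to absorb all $n+1$ insertion positions.
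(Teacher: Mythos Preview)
Your proof is correct and follows essentially the same route as the paper's: apply $\phi$ termwise to $b_0^z$, use centrality to pull $d(z_{j+1})$ to the far left as $d(z_0)$, reindex the $(n{+}2)$-tuple into an $(n{+}1)$-tuple with multiplicity $l_j+1$, and sum to get the telescoping factor $L+n+1$ that cancels against the factorial. The paper's version is terser---it jumps directly from the signed expression with $d(z_{i+1})$ in the middle to $d(z_0)$ on the left without displaying the sign cancellation---whereas you spell out carefully that the sign $(-1)^{|a_0|+\cdots+|a_j|+j}$ from commuting past $a_0,d(a_1),\dots,d(a_j)$ exactly kills the sign $(-1)^{|a_0|+\cdots+|a_j|-j}$ in $b_0^z$; this is a welcome clarification of the step the paper leaves implicit.
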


\begin{proof} For $a_i: X_{i+1} \to X_i$, $i = 0, \dots n$, with $X_{n+1} = X_0$, set $z_i = z_{X_i}$ and $h_i = h_{X_i}$.  Then
$$
\begin{aligned}
\phi & (b^z_0(a_0[a_1| \cdots| a_n])) \\
& = \phi(\sum_{i=0}^n (-1)^{|a_0| + \cdots + |a_i| - i} a_0[a_1| \cdots |a_i| z _{i+1} |a_{i+1}| \cdots | a_n])) \\
& = \sum_i (-1)^{|a_0| + \cdots + |a_i| - i}   \sum_{\vm} \frac{a_0 h_1^{m_0} d(a_1) \cdots d(a_i) h_{i+1}^{m_i}   d(z_{i+1})  h_{i+1}^{m_{i+1}} 
d(a_{i+1})  \cdots d(a_n) h_0^{m_{n+1}}}{(M+n+1)!}  v^{M +n + 1} \\
& = d(z_0) \sum_i \sum_\vm  \frac{a_0 h_1^{m_0} \cdots d(a_i) h_{i+1}^{m_i+m_{i+1}} d(a_{i+1}) \cdots d(a_n) h_0^{m_{n+1}}}{(M+n+1)!} v^{M+n+1} \\
\end{aligned}
$$
where $\vm = (m_0, \dots, m_{n+1})$ ranges over all $(n+1)$-tuples, and $M := m_0 + \cdots + m_{n+1}$. 
By substituting
$$
j_0 = m_0  \text{, } \dots  \text{, } j_{i-1} = m_{i-1}  \text{, } j_i = m_{i} + m_{i+1}  \text{, } j_{i+1} = m_{i+2} \text{, } \dots  \text{, } j_n = m_{n+1},
$$
we obtain
$$
\begin{aligned}
\phi & (b^z_0(a_0[a_1| \cdots| a_n])) \\
& = d(z_0)  \sum_{\vj} \sum_i (j_i+1) \frac{a_0 h^{j_0} \cdots d(a_i) h^{j_i} d(a_{i+1}) \cdots   d(a_n) h^{j_n} }{(J+n+1)!} v^{J+n+1} \\
& = d(z_0)  \sum_\vj (J+n+1)\frac{a_0 h^{j_0} \cdots d(a_i) h^{j_i} d(a_{i+1}) \cdots  d(a_n) h^{j_n} }{(J+n+1)!} v^{J+n+1} \\
& = v d(z_0)  \sum_\vj \frac{a_0 h^{j_0} \cdots d(a_i) h^{j_i} d(a_{i+1}) \cdots  d(a_n) h^{j_n} }{(J+n)!} v^{J+n} \\
& = v \l_{d(z)}  \phi(a_0[a_1| \cdots| a_n]) .   \qedhere \\
\end{aligned}
$$
\end{proof}

\begin{cor} \label{cor321b}
With the notation of Theorem \ref{existence}, we have
$$
\tr_\n \circ b_0 = \l_{dh} \circ \tr_\n.
$$
\end{cor}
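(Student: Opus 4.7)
The plan is to mirror the proof of Corollary~\ref{cor321}, substituting the Proposition $\phi \circ b_0^z = v\lambda_{d(z)} \circ \phi$ for the one used there. I will reuse the auxiliary $k[[u]]$-linear cdg category $\cC$ from that proof (same objects as $\cD$, morphism spaces $\Hom_A(P,P') \otimes_A \Omega^\bullet_{A/k}[[u]]$, differential $u[\n,-]$, curvature $u^2\n_P^2$), together with the functor $F: \cD^\nat \to \cC^\nat$, the trace map $\pi : \cC/[\cC,\cC][[v]] \to \Omega^\bullet_{A/k}[[u,v]]$, and the substitution $l: v \mapsto -u^{-1}$.

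The first step is to exhibit a central element of degree $2$ in $\cC$. I will take $z = \{z_P\}$ with $z_P := \rho_h$, right multiplication by $h$. Since $A$ is commutative and every morphism in $\cC$ is $A$-linear, $\alpha \rho_h = \rho_h \alpha$. The compatibility $\alpha\,d(z_P) = (-1)^{|\alpha|} d(z_{P'})\alpha$ follows from the identification $d(z_P) = u[\n,\rho_h] = u\cdot dh\cdot \id_P$ as an element of $\End_A(P) \otimes_A \Omega^1_{A/k}$; this identification is independent of $\n_P$ (two connections differ by an $A$-linear map, which commutes with $\rho_h$) and may be checked after trivializing $P$ locally, where $\n = d + \Theta$, $[d,h\cdot\id] = dh\cdot\id$, and $[\Theta, h\cdot\id]=0$. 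Now, since the curvature of each object of $\cD$ is $\rho_h = z_P$, the $b_0$ operator on $\oHoch^{II}(\cD)$ (see \eqref{b0formula}) is carried to $b_0^z$ on $\oHoch^{II}(\cC)$ by $F_*$; that is, $F_* \circ b_0 = b_0^z \circ F_*$.

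Combining this with the Proposition yields
$$
\phi \circ F_* \circ b_0 \;=\; \phi \circ b_0^z \circ F_* \;=\; v\,\lambda_{d(z)} \circ \phi \circ F_*.
$$
Applying $\pi$ and using that $\pi \circ \lambda_{d(z)} = u\cdot(dh\wedge -)\circ \pi$ (an immediate consequence of the preceding identification of $d(z_P)$ together with $\Omega^\bullet_{A/k}[[u]]$-linearity of $\pi$ and the vanishing of trace on odd-degree endomorphisms, cf.\ Remark~\ref{tr}), we get
$$
\pi \circ \phi \circ F_* \circ b_0 \;=\; uv\cdot(dh\wedge -) \circ \pi \circ \phi \circ F_*.
$$
Post-composing with $l$ converts $uv$ into $-1$ and produces $\widetilde{\tr}_\n \circ b_0 = -(dh \wedge -) \circ \widetilde{\tr}_\n$. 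Since $\widetilde{\tr}_\n$ differs from $\tr_\n$ by the sign $(-1)^n$ on chains of length $n$, and $b_0$ raises $n$ by one, this sign discrepancy absorbs the leading minus sign and gives $\tr_\n \circ b_0 = \lambda_{dh} \circ \tr_\n$, as desired. The main technical point is the identification $d(z_P) = u\cdot dh\cdot \id_P$ (equivalently, that $z$ is genuinely central in $\cC$); once this is established, the argument is entirely parallel to that of Corollary~\ref{cor321}.
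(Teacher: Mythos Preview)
Your proof is correct and follows essentially the same route as the paper's: build the auxiliary category $\cC$, take $z_P$ to be multiplication by $h$, apply the Proposition $\phi\circ b_0^z = v\lambda_{d(z)}\circ\phi$, push through $\pi$ and the substitution $v\mapsto -u^{-1}$, and use the parity shift of $b_0$ to convert $\widetilde{\tr}_\n$ to $\tr_\n$. You are in fact slightly more careful than the paper in tracking the factor of $u$ in $d(z_P)=u[\n,\rho_h]=u\cdot dh\cdot\id_P$; the paper's proof elides this $u$ and consequently its final displayed equation reads $u\,\tr_\n\circ b_0=\lambda_{dh}\circ\tr_\n$, which appears to be a typo for the stated identity (your bookkeeping gives the correct version directly).
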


\begin{proof} We adopt the notation in the proof of  Corollary \ref{cor321}. The curvature element $h \in A^2$ determines a central element of $\cC$ by setting $z_P
  \in \End_A(P) \subseteq \End_A(P) \otimes_A \Omega^\bu_{A/k}$ to be multiplication by $h$ on $P$. Note that $d(z_P)$ is left multiplication by $dh
  \in \Omega^1_{A/k}$. 
Using the Proposition, we have an equality of maps
$$
\pi \circ \phi \circ b^z_0 \circ F_* = v \pi \circ \l_{dh} \circ \phi \circ F_*
$$
from $\oHoch^{II}(\cD)$ to $\Omega^\bu_{A/k}[[u,v]]$. We have $b^z_0 \circ F_* = F_* \circ b_0$ and $\pi \circ \l_{dh} = \l_{dh} \circ \pi$, and so, 
upon
setting $v = -u^{-1}$,  we arrive at
$$
\widetilde{\tr}_\n \circ b_0 = - u^{-1} \l_{dh} \widetilde{\tr}_\n.
$$
As before, since $b_0$ shifts the index $n$ by $1$, this yields
$$
u \tr_\n \circ b_0 = \l_{dh} \circ \tr_\n. \qedhere
$$
\end{proof}

Observe that Corollaries \ref{cor321} and \ref{cor321b} together establish Theorem \ref{existence}.

\bibliographystyle{amsalpha}
\bibliography{Bibliography}

\end{document}